\theoremstyle{definition}
\newtheorem{remark}{Remark}[section]
\newtheorem{example}[remark]{Example}
 \theoremstyle{plain}
\newtheorem{definition}[remark]{Definition}
\newtheorem{theorem}[remark]{Theorem}
\newtheorem{proposition}[remark]{Proposition}
\newtheorem{corollary}[remark]{Corollary}
\newtheorem{lemma}[remark]{Lemma}
\newtheorem{question}[remark]{Question}
\DeclareMathOperator{\Id}{id}
\DeclareMathOperator{\Ad}{Ad}
\DeclareMathOperator{\Ind}{Ind}
\DeclareMathOperator{\Mor}{Mor}
\newcommand{\spacestyle}{\scriptstyle}
\newcommand{\mapstyle}{\textstyle}
\newcommand{\smalldiagram}{\def\labelstyle{\mapstyle}}
\newcommand{\hDelta}{\widehat{\Delta}}
\newcommand{\lnspan}{\big[} \newcommand{\rnspan}{\big]}
\newcommand{\frakB}{\mathfrak{B}}
\newcommand{\frakC}{\mathfrak{C}}
\newcommand{\frakD}{\mathfrak{D}}
\newcommand{\frakH}{\mathfrak{H}}
\newcommand{\frakK}{\mathfrak{K}}
\newcommand{\cbasel}[2]{(\mathfrak{#1}, \mathfrak{#2}, \mathfrak{#1}^{\dagger})}
\newcommand{\cbases}[2]{{_{\mathfrak{#1}}}\mathfrak{#2}_{\mathfrak{#1}^{\dagger\!}}} 
\newcommand{\cbaseos}[2]{{_{\mathfrak{#1}^{\dagger\!}}}\mathfrak{#2}_{\mathfrak{#1}}} 
\newcommand{\cbasesb}{\cbases{B}{H}}
\newcommand{\cbasesc}{\cbases{C}{K}}
\newcommand{\cbaseosc}{\cbaseos{C}{K}}
\newcommand{\cbaseosb}{\cbaseos{B}{H}}
\newcommand{\lt}{\triangleleft}
\newcommand{\rt}{\triangleright}
\newcommand{\hbeta}{\widehat{\beta}}
\newcommand{\hA}{\widehat{A}} \newcommand{\ha}{\widehat{a}}
\newcommand{\frei}{\,\cdot\,}
\newcommand{\mycong}{\xrightarrow{\cong}}
\newcommand{\rtensor}[3]{ {_{#1}\! \underset{#2}{\otimes}\! {}_{#3}}}
\newcommand{\vtensor}[3]{ {_{#1} \underset{#2}{\bar{\otimes}} {}_{#3}}}
\newcommand{\htensor}[2]{\rtensor{#1}{\frakH}{#2}}
\newcommand{\rtensorab}{\htensor{\alpha}{\beta}}
\newcommand{\rtensorh}{\underset{\frakH}{\otimes}}
\newcommand{\fibre}[3]{ {_{#1}\! \underset{#2}{\ast}\! {}_{#3}}}
\newcommand{\fsource}{\htensor{\hbeta}{\alpha}}
\newcommand{\frange}{\rtensorab}
\newcommand{\sfsource}{\fsource}
\newcommand{\sfrange}{\frange}
\newcommand{\tl}{\ensuremath \olessthan}
\newcommand{\tr}{\ensuremath \ogreaterthan}
\newcommand{\Hsource}{H \fsource H}
\newcommand{\Hrange}{H \frange H}
\newcommand{\sHsource}{H \sfsource H}
\newcommand{\sHrange}{H \sfrange H}
\newcommand{\HfibreK}{H \rtensorab K}
\newcommand{\LfibreM}{L \rtensor{\gamma}{\frakH}{\delta} M}
\newcommand{\vHfibreK}{H {_{\tilde
      \rho_{\alpha}}}\bar{\underset{\mu}{\otimes}} {}_{\tilde
    \rho_{\beta}} K} 
\newcommand{\Hone}{H \sfsource H \sfsource H}
\newcommand{\Htwo}{H \sfrange H \sfsource  H}
\newcommand{\Hthree}{H \sfrange H \sfrange H}
\newcommand{\Hfour}{(\sHsource) \rtensor{\alpha \lt \alpha}{\frakH}{\beta} H}
\newcommand{\Hfive}{H \rtensor{\hbeta}{\frakH}{\alpha \rt \alpha} (\sHrange)}
\newcommand{\Hfourlt}{H \sfsource H \rtensor{\beta}{\frakH}{\alpha} H}
\newcommand{\Hfourrt}{\big(\sHrange\big) \rtensor{\hbeta \lt
    \beta}{\frakH}{\alpha} H}
\newcommand{\fibreab}{\fibre{\alpha}{\frakH}{\beta}}
\newcommand{\AfibreA}{A \fibreab A}
\newcommand{\AfibreB}{A \fibreab B}
\newcommand{\ind}[2]{\mathrm{Ind}_{#1}({#2})}
\newcommand{\indgA}{\ind{\Gamma}{A}}
\newcommand{\kalpha}[1]{|\alpha\rangle_{\leg{#1}}}
\newcommand{\balpha}[1]{\langle\alpha|_{\leg{#1}}}
\newcommand{\kbeta}[1]{|\beta{}\rangle_{\leg{#1}}}
\newcommand{\bbeta}[1]{\langle\beta|_{\leg{#1}}}
\newcommand{\khbeta}[1]{|\hbeta{}\rangle_{\leg{#1}}}
\newcommand{\bhbeta}[1]{\langle\hbeta|_{\leg{#1}}}
\newcommand{\Nmu}{\ensuremath \mathfrak{N}_{\mu}}
\newcommand{\cfact}{\ensuremath \mathrm{C}^{*}\mathrm{\text{-}fact}}
\newcommand{\Vl}[1]{V_{\leg{#1}}}
\newcommand{\vnast}{\bar{\ast}}
\newcommand{\xright}[1]{\xrightarrow{ #1}}
\newcommand{\leg}[1]{[#1]}
\title{$C^{*}$-pseudo-multiplicative unitaries}
\author{Thomas Timmermann\\[1ex]
  \texttt{timmermt@math.uni-muenster.de}\\ SFB 478
  ``Geometrische Strukturen in der Mathematik''\\ Hittorfstr.\
  27, 48149 M\"unster}
\date{\today}
\begin{document} \xyrequire{matrix} \xyrequire{arrow}

\maketitle

\abstract{We introduce $C^{*}$-pseudo-multiplicative unitaries and
  (concrete) Hopf $C^{*}$-bimodules, which are $C^{*}$-algebraic
  variants of the pseudo-multiplicative unitaries on Hilbert spaces
  and the Hopf-von Neumann-bimodules studied by Enock, Lesieur, and
  Vallin \cite{enock:1,enock:2,enock:10,lesieur,vallin:1,vallin:2}. Moreover,
  we associate to every regular $C^{*}$-pseudo-multiplicative unitary
  two Hopf-$C^{*}$-bimodules and discuss examples related to locally
  compact groupoids.  }

\section{Introduction}

Multiplicative unitaries, introduced by Baaj and Skandalis
\cite{baaj:2}, play a central r\^ole in operator-algebraic approaches
to quantum groups \cite{vaes:1,vaes:30,masuda}. Most importantly, one
can associate to every locally compact quantum group a manageable
multiplicative unitary, and to every manageable multiplicative unitary
two Hopf $C^{*}$-algebras or Hopf von Neumann-algebras called the
``legs'' of the unitary. One of these legs coincides with the initial
quantum group, and the other one is its generalized Pontrjagin dual.

In this article, we introduce $C^{*}$-pseudo-multiplicative unitaries
and Hopf $C^{*}$-bimodules, and associate to every regular
$C^{*}$-pseudo-multiplicative unitary two Hopf $C^{*}$-bimodules,
generalizing the construction of Baaj and Skandalis \cite{baaj:2}. We
think that these two concepts can form the starting point for the
development of a theory of locally compact quantum groupoids in the
setting of $C^{*}$-algebras.

In the setting of von Neumann algebras, a satisfactory theory of
locally compact quantum groupoids has already been developed by
Lesieur \cite{lesieur}, building on the concepts of a
pseudo-multiplicative unitary and of a Hopf von Neumann bimodule
introduced by Vallin \cite{vallin:1,vallin:2}.  Our
$C^{*}$-pseudo-multiplicative unitaries and Hopf $C^{*}$-bimodules
turn out to be closely related to their von Neumann algebraic
counterparts.

Let us mention that another approach to the problems pursued in this
article was developed in the PhD thesis of the author
\cite{timmermann:thesis}. The approach presented here allows us to
drop a rather restrictive condition (decomposability) needed in
\cite{timmermann:thesis}, and to work in the framework of
$C^{*}$-algebras instead of the somewhat exotic $C^{*}$-families.  A
comparison between the two approaches is in preparation.

This work was supported by the SFB 478 ``Geometrische Strukturen in
der Mathematik''\footnote{funded by the Deutsche
  Forschungsgemeinschaft (DFG)} and partially pursued during a
stay at the ``Special Programme on Operator Algebras'' at the Fields
Institute in Toronto, Canada.

\paragraph{Organization}
This article is organized as follows:

First, we fix  notation and terminology, and summarize some background
on (Hilbert) $C^{*}$-modules and on proper KMS-weights.

In Section 2, we introduce some convenient notation and terminology
related to a $C^{*}$-algebraic analogue of Connes' von
Neumann-algebraic relative tensor product of Hilbert spaces. Our
simple $C^{*}$-relative tensor product is based on the internal tensor
product of $C^{*}$-modules and enjoys all properties that one should
expect like symmetry, associativity, and functoriality.

In Section 3, we introduce a spatial fiber product of $C^{*}$-algebras
that is based on the $C^{*}$-relative tensor product of Hilbert
spaces. The definition is provisional and lacks several desirable
properties like associativity. On the other hand, the construction is functorial,
compatible with the fiber product of von Neumann algebras in a natural
sense, and well-suited for the definition of Hopf
$C^{*}$-bimodules which is given at the end of this section.

In Section 4, we define $C^{*}$-pseudo-multiplicative unitaries, using
the $C^{*}$-relative tensor product of Hilbert spaces introduced in
Section 2. We show that each such unitary is a pseudo-multiplicative
unitary on Hilbert spaces in the sense of Vallin \cite{vallin:2} and
restricts to a pseudo-multiplicative unitary on $C^{*}$-modules in the
sense of Timmermann \cite{timmermann}. To each such unitary, we
associate two algebras and two normal $*$-homomorphisms which, under
favorable circumstances, form Hopf $C^{*}$-bimodules. In particular,
we adapt the regularity condition known for (pseudo-)multiplicative
unitaries \cite{baaj:2,enock:10} to $C^{*}$-pseudo-multiplicative
unitaries and show that if this condition is satisfied, then the
algebras and $*$-homomorphisms mentioned above do form Hopf
$C^{*}$-bimodules.

Section 6 is devoted to locally compact groupoids. We discuss
the $C^{*}$-pseudo-multiplicative unitary associated to a locally
compact groupoid, show that it is regular, and determine the
associated Hopf $C^{*}$-bimodules.

\paragraph{Preliminaries}

Given a subset $Y$ of a normed space $X$, we denote by $[Y] \subset X$
the closed linear span of $Y$.

Given a Hilbert space $H$ and a subset $X \subseteq {\cal L}(H)$, we
denote by $X'$  the commutant of $X$. Given  Hilbert spaces $H$, $K$,
a $C^{*}$-subalgebra $A \subseteq {\cal L}(H)$, and a $*$-homomorphism
$\pi \colon A \to {\cal L}(K)$, we put
\begin{align*}
  {\cal L}^{\pi}(H,K) := \{ T \in {\cal L}(H,K) \mid Ta = \pi(a)T
  \text{ for all } a \in A\};
\end{align*}
thus, for example, $A' = {\cal L}^{\Id_{A}}(H)$.

We shall make extensive use of (right) $C^{*}$-modules, also known as
Hilbert $C^{*}$-modules or Hilbert modules. A standard reference is
\cite{lance}.

All sesquilinear maps like inner products of Hilbert spaces
or $C^{*}$-modules are assumed to be conjugate-linear in the first
component and linear in the second one.

Let $A$ and $B$ be $C^{*}$-algebras.  
Given $C^{*}$-modules $E$ and $F$ over $B$, we denote the space of all
adjointable operators $E\to F$ by ${\cal L}_{B}(E,F)$, and the subspace
of all compact operators by ${\cal K}_{B}(E,F)$.

Let $E$ and $F$ be $C^{*}$-modules over $A$ and $B$, respectively, and
let $\pi \colon A \to {\cal L}_{B}(F)$ be a $*$-homomorphism. Then one
can form the internal tensor product $E \otimes_{\pi} F$, which is a
$C^{*}$-module over $B$ \cite[Chapter 4]{lance}. This $C^{*}$-module
is the closed linear span of elements $\eta \otimes_{A} \xi$, where
$\eta \in E$ and $\xi \in F$ are arbitrary, and $\langle \eta
\otimes_{\pi} \xi|\eta' \otimes_{\pi} \xi'\rangle = \langle
\xi|\pi(\langle\eta|\eta'\rangle)\xi'\rangle$ and $(\eta \otimes_{\pi}
\xi)b=\eta \otimes_{\pi} \xi b$ for all $\eta,\eta' \in E$, $\xi,\xi'
\in F$, and $b \in B$.  We denote the internal tensor product by
``$\tr$''; thus, for example, $E \tr_{\pi} F=E \otimes_{\pi} F$. If
the representation $\pi$ or both $\pi$ and $A$ are understood, we
write ``$\tr_{A}$'' or  ``$\tr$'', respectively, instead of
$"\tr_{\pi}$''. 

Given $E$, $F$ and $\pi$ as above, we define a {\em flipped internal
  tensor product} $F {_{\pi}\tl} E$ as follows. We equip the algebraic
tensor product $F \odot E$ with the structure maps $\langle \xi \odot
\eta | \xi' \odot \eta'\rangle := \langle \xi| \pi(\langle
\eta|\eta'\rangle) \xi'\rangle$, $(\xi \odot \eta) b := \xi b \odot
\eta$, and by factoring out the null-space of the semi-norm
$\zeta\mapsto \| \langle \zeta|\zeta\rangle\|^{1/2}$ and taking
completion, we obtain a $C^{*}$-$B$-module $F {_{\pi}\tl} E$.  This is
the closed linear span of elements $\xi \tl \eta$, where $\eta \in E$
and $\xi \in F$ are arbitrary, and $\langle \xi \tl \eta|\xi'
\tl \eta'\rangle = \langle \xi|\pi(\langle\eta|\eta'\rangle)\xi'\rangle$ and
$(\xi \tl \eta)b=\xi b \tl \eta$ for all $\eta,\eta' \in E$, $\xi,\xi'
\in F$, and $b\in B$. As above, we write ``${_{A}\tl}$'' or simply
``$\tl$'' instead of ``${_{\pi}\tl}$'' if the representation $\pi$ or
both  $\pi$ and $A$ are understood, respectively.

Evidently, the usual and the flipped internal tensor product are related by
a unitary map $\Sigma \colon F \tr E \mycong E \tl F$, $\eta \tr \xi
\mapsto \xi \tl \eta$.

We shall frequently use the following result \cite[Proposition
1.34]{echterhoff}:
\begin{proposition} \label{proposition:itp-operators} Let
  $E_{1},E_{2}$ be $C^{*}$-modules over $A$, let $F_{1}$, $F_{2}$ be
  $C^{*}$-modules over $B$ with representations $\pi_{i} \colon A \to
  {\cal L}_{B}(F_{i})$ ($i=1,2$), and let $S \in {\cal
    L}_{A}(E_{1},E_{2})$, $T \in {\cal L}_{B}(F_{1},F_{2})$ such that
  $T\pi_{1}(a)=\pi_{2}(a)T$ for all $a \in A$. Then there exists a
  unique operator $S \tr T \in {\cal L}_{B}(E_{1} \tr F_{1}, E_{2} \tr
  F_{2})$ such that $(S \tr T)(\eta \tr \xi)= S\eta \tr T\xi$ for all
  $\eta \in E_{1}$ and $\xi \in F_{1}$. Moreover, $(S\tr T)^{*}=S^{*} \tr
  T^{*}$.\qed
\end{proposition}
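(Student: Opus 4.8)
The plan is to construct $S \tr T$ on elementary tensors and extend it by continuity; this makes uniqueness immediate, since $E_{1} \tr F_{1}$ is the closed linear span of the elements $\eta \tr \xi$ and a bounded operator is determined by its values on a dense set. For existence I would first define, on the algebraic tensor product $E_{1} \odot F_{1}$, the linear map $S \odot T$ induced by the bilinear assignment $(\eta,\xi) \mapsto S\eta \tr T\xi \in E_{2} \tr F_{2}$. That this map descends to $E_{1} \tr F_{1}$ will follow from the norm estimate below, which forces null vectors of the seminorm $\zeta \mapsto \|\langle\zeta|\zeta\rangle\|^{1/2}$ to map to null vectors; it is compatible with the balancing relations because $S$ is $A$-linear and $\pi_{2}(a)T = T\pi_{1}(a)$, so that $S(\eta a) \tr T\xi = S\eta \tr T(\pi_{1}(a)\xi)$.

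The heart of the argument, and the step I expect to be the main obstacle, is the estimate
\begin{align*}
  \langle (S \odot T)\zeta | (S \odot T)\zeta\rangle \leq \|S\|^{2}\|T\|^{2} \langle\zeta|\zeta\rangle \quad\text{in } B
\end{align*}
for $\zeta = \sum_{i=1}^{n} \eta_{i} \tr \xi_{i}$, which I would prove by matrix amplification in two stages. From $S^{*}S \leq \|S\|^{2}\Id_{E_{1}}$ one obtains, by the standard amplification to $M_{n}(A)$, the inequality of positive matrices $(\langle S\eta_{i}|S\eta_{j}\rangle)_{ij} \leq \|S\|^{2}(\langle\eta_{i}|\eta_{j}\rangle)_{ij}$. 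Applying the $*$-homomorphism $\pi_{2} \otimes \Id_{n} \colon M_{n}(A) \to M_{n}({\cal L}_{B}(F_{2}))$ preserves this inequality, and pairing both sides with the vector $(T\xi_{1},\dots,T\xi_{n}) \in F_{2}^{\,n}$ yields $\langle (S \odot T)\zeta | (S \odot T)\zeta\rangle \leq \|S\|^{2} \sum_{ij}\langle T\xi_{i}|\pi_{2}(\langle\eta_{i}|\eta_{j}\rangle)T\xi_{j}\rangle$. For the second stage I would use $\pi_{2}(a)T = T\pi_{1}(a)$ to rewrite each summand on the right as $\langle\xi_{i}|T^{*}T\pi_{1}(\langle\eta_{i}|\eta_{j}\rangle)\xi_{j}\rangle$; taking adjoints in the intertwining relation shows $T^{*}T$ commutes with every $\pi_{1}(a)$, hence the diagonal amplification of $\|T\|^{2}\Id - T^{*}T \geq 0$ commutes with the positive matrix $(\pi_{1}(\langle\eta_{i}|\eta_{j}\rangle))_{ij}$, their product is positive, and pairing once more with $(\xi_{1},\dots,\xi_{n})$ bounds the right-hand side by $\|T\|^{2}\langle\zeta|\zeta\rangle$. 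Combining the two stages gives the displayed estimate.

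The estimate shows $S \odot T$ is bounded with norm at most $\|S\|\,\|T\|$ and maps null vectors to null vectors, so it descends and extends to the desired operator $S \tr T \in {\cal L}_{B}(E_{1} \tr F_{1}, E_{2} \tr F_{2})$ with $(S \tr T)(\eta \tr \xi) = S\eta \tr T\xi$. Finally, for adjointability I note that taking adjoints in $T\pi_{1}(a) = \pi_{2}(a)T$ gives $T^{*}\pi_{2}(a) = \pi_{1}(a)T^{*}$, so the construction applies equally to $S^{*} \in {\cal L}_{A}(E_{2},E_{1})$ and $T^{*} \in {\cal L}_{B}(F_{2},F_{1})$ and produces an operator $S^{*} \tr T^{*}$. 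A direct computation on elementary tensors, using $\langle S\eta|\eta'\rangle = \langle\eta|S^{*}\eta'\rangle$ together with $T^{*}\pi_{2}(a) = \pi_{1}(a)T^{*}$, shows $\langle (S \tr T)(\eta \tr \xi) | \eta' \tr \xi'\rangle = \langle \eta \tr \xi | (S^{*} \tr T^{*})(\eta' \tr \xi')\rangle$; by linearity and continuity this identity extends to all of $E_{1} \tr F_{1}$ and $E_{2} \tr F_{2}$, establishing that $S \tr T$ is adjointable with $(S \tr T)^{*} = S^{*} \tr T^{*}$.
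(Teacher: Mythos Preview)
Your argument is correct and is essentially the standard proof of this result; the paper does not supply its own proof but simply cites \cite[Proposition 1.34]{echterhoff}, as indicated by the \qed\ terminating the statement. Your two-stage matrix-amplification estimate and the adjointability check are exactly what one finds in that reference, so there is nothing to compare.
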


We shall frequently consider the following kind of $C^{*}$-modules:
Let $H$ and $K$ be Hilbert spaces. We call a subset $\Gamma \subseteq
{\cal L}(H,K)$ a {\em concrete $C^{*}$-module} if $\lnspan \Gamma
\Gamma^{*} \Gamma\rnspan = \Gamma$.  If $\Gamma$ is such a concrete
$C^{*}$-module, then evidently $\Gamma^{*}$ is a concrete
$C^{*}$-module as well, the space $B:=\lnspan \Gamma^{*}
\Gamma\rnspan$ is a $C^{*}$-algebra, and $\Gamma$ is a full right
$C^{*}$-module over $B$ with respect to the inner product given by
$\langle \zeta|\zeta'\rangle=\zeta^{*}\zeta'$ for all $\zeta,\zeta'
\in \Gamma$.
\begin{lemma} \label{lemma:itp-concrete}
  Let $H$, $K$ and $L$ be Hilbert spaces and $\Delta \subseteq {\cal
    L}(H,K)$ and $\Gamma \subseteq {\cal L}(K,L)$ be concrete
  $C^{*}$-modules such that $\lnspan \Gamma^{*}\Gamma \Delta\rnspan
  \subseteq \Delta$. Then $\lnspan \Gamma \Delta\rnspan \subseteq
  {\cal L}(H,L)$ is a concrete $C^{*}$-module, and there exists an
  isomorphism of right $C^{*}$-modules $\Gamma \tr_{\lnspan
    \Gamma^{*}\Gamma\rnspan} \Delta \cong \lnspan
  \Gamma\Delta\rnspan$, $\gamma \tr \delta \mapsto \gamma\delta$.
\end{lemma}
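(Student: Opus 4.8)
The plan is to realise $\lnspan\Gamma\Delta\rnspan$ as the isometric image of the internal tensor product under the multiplication map, and to read off both assertions from that. Write $B_{\Delta}:=\lnspan\Delta^{*}\Delta\rnspan$ and $B_{\Gamma}:=\lnspan\Gamma^{*}\Gamma\rnspan$, so that $\Delta$ is a full right $C^{*}$-module over $B_{\Delta}$ and $\Gamma$ a full right $C^{*}$-module over $B_{\Gamma}$ with $\langle\delta|\delta'\rangle=\delta^{*}\delta'$ and $\langle\gamma|\gamma'\rangle=\gamma^{*}\gamma'$. First I would define a left action $\pi\colon B_{\Gamma}\to{\cal L}_{B_{\Delta}}(\Delta)$ by $\pi(b)\delta:=b\delta$. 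The hypothesis $\lnspan\Gamma^{*}\Gamma\Delta\rnspan\subseteq\Delta$ is precisely what guarantees $B_{\Gamma}\Delta\subseteq\Delta$, so that $\pi(b)$ maps $\Delta$ into $\Delta$; it is $B_{\Delta}$-linear because left and right multiplication commute, and since $b^{*}\in B_{\Gamma}$ as well, one checks $\langle\pi(b)\delta|\delta'\rangle=\delta^{*}b^{*}\delta'=\langle\delta|\pi(b^{*})\delta'\rangle$, so $\pi(b)$ is adjointable with $\pi(b)^{*}=\pi(b^{*})$. Thus $\pi$ is a $*$-homomorphism and the internal tensor product $\Gamma\tr_{\pi}\Delta=\Gamma\tr_{B_{\Gamma}}\Delta$ is defined.

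Next I would consider the linear map $\phi_{0}\colon\Gamma\odot\Delta\to{\cal L}(H,L)$ determined by $\gamma\odot\delta\mapsto\gamma\delta$ (note $\gamma\delta\in{\cal L}(H,L)$). The key computation is that for $\zeta=\sum_{i}\gamma_{i}\odot\delta_{i}$,
\[
  \phi_{0}(\zeta)^{*}\phi_{0}(\zeta)=\sum_{i,j}\delta_{i}^{*}\gamma_{i}^{*}\gamma_{j}\delta_{j}=\sum_{i,j}\langle\delta_{i}|\pi(\langle\gamma_{i}|\gamma_{j}\rangle)\delta_{j}\rangle=\langle\zeta|\zeta\rangle,
\]
the right-hand side being the pre-inner product of $\Gamma\tr_{\pi}\Delta$. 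Hence $\|\phi_{0}(\zeta)\|=\|\langle\zeta|\zeta\rangle\|^{1/2}$, so $\phi_{0}$ is isometric for the seminorm on $\Gamma\odot\Delta$; in particular it is balanced and kills the null space, so it descends to an isometry $\phi\colon\Gamma\tr_{\pi}\Delta\to{\cal L}(H,L)$ with $\phi(\gamma\tr\delta)=\gamma\delta$. Its range is closed (isometric image of a complete space) and equals the closed span of the products $\gamma\delta$, namely $M:=\lnspan\Gamma\Delta\rnspan$. The polarised computation gives $\langle\phi(u)|\phi(v)\rangle=\phi(u)^{*}\phi(v)$, and $\phi$ evidently intertwines the right $B_{\Delta}$-actions; therefore $\phi$ is an isomorphism of right $C^{*}$-modules onto $M$, and $M$ carries the $B_{\Delta}$-valued inner product $\langle S|T\rangle=S^{*}T$ with $\lnspan M^{*}M\rnspan\subseteq B_{\Delta}$. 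This already yields the asserted isomorphism $\Gamma\tr_{\lnspan\Gamma^{*}\Gamma\rnspan}\Delta\mycong\lnspan\Gamma\Delta\rnspan$.

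It remains to show that $M$ is a concrete $C^{*}$-module, i.e.\ $\lnspan MM^{*}M\rnspan=M$, which unravels to $\lnspan\Gamma\Delta\Delta^{*}\Gamma^{*}\Gamma\Delta\rnspan=\lnspan\Gamma\Delta\rnspan$. The inclusion $\subseteq$ is a short chain using the hypothesis: $\Gamma^{*}\Gamma\Delta\subseteq\lnspan\Delta\rnspan$ gives $\Delta^{*}\Gamma^{*}\Gamma\Delta\subseteq B_{\Delta}$, hence $\Delta\Delta^{*}\Gamma^{*}\Gamma\Delta\subseteq\lnspan\Delta B_{\Delta}\rnspan=\Delta$ and finally $\Gamma\Delta\Delta^{*}\Gamma^{*}\Gamma\Delta\subseteq\Gamma\Delta$. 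The reverse inclusion is the \emph{main obstacle}, and I expect it to be the one place where a direct factorisation fails: a single product $\gamma\delta$ need not lie in $\Gamma\cdot\lnspan\Delta\Delta^{*}\Gamma^{*}\Gamma\Delta\rnspan$. Instead I would invoke the general Hilbert-module identity $\lnspan M\langle M|M\rangle\rnspan=M$: taking an approximate unit $(u_{\lambda})$ of the $C^{*}$-algebra $\lnspan M^{*}M\rnspan$, the estimate $\|Su_{\lambda}-S\|^{2}=\|u_{\lambda}\langle S|S\rangle u_{\lambda}-u_{\lambda}\langle S|S\rangle-\langle S|S\rangle u_{\lambda}+\langle S|S\rangle\|\to 0$ shows $S=\lim_{\lambda}Su_{\lambda}\in\lnspan M M^{*}M\rnspan$ for every $S\in M$. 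This gives $M\subseteq\lnspan MM^{*}M\rnspan$ and completes the proof; note that the hypothesis enters only through the well-definedness and adjointability of $\pi$ (equivalently, through $\lnspan M^{*}M\rnspan\subseteq B_{\Delta}$).
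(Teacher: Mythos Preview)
Your argument is correct. The paper states this lemma without proof, so there is nothing to compare against; the route you take---realising the multiplication map as an isometry from the internal tensor product and then using an approximate unit of $\lnspan M^{*}M\rnspan$ to obtain $M\subseteq\lnspan MM^{*}M\rnspan$---is the natural one and goes through as written.
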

We shall be primarily interested in the case where $H=K$ and 
$\Delta \subseteq {\cal L}(H)$ is a $C^{*}$-algebra.

\section{The $C^{*}$-relative tensor product}

The construction of a ($C^{*}$-)relative tensor product of Hilbert
spaces is needed for the definition of ($C^{*}$-)pseudo-multiplicative
unitaries and almost everywhere in the theory of locally compact
quantum groupoids.

 In the setting of von Neumann algebras, the relative
tensor product is constructed as follows. Given a Hilbert
space $K$ with a (normal, faithful, nondegenerate) antirepresentation
of a von Neumann algebra $N$ equipped with a (normal, faithful,
semifinite) weight $\nu$, one defines a subspace $D(K;\nu) \subseteq
K$ of {\em bounded elements} and an $N$-valued inner product $\langle
\frei|\frei \rangle_{\nu}$ on $D(K;\nu)$. Given another Hilbert space $H$
with a normal faithful nondegenerate representation of $N$, one
defines the relative tensor product $H \rtensor{}{\nu}{} K$ to be the
completion of the algebraic tensor product $H \odot D(K;\nu^{op})$
with respect to the inner product $\langle \eta \odot \xi|\eta \odot
\xi'\rangle = \langle \eta |
\langle\xi|\xi'\rangle_{\nu}\eta'\rangle$. This construction is
symmetric in the sense that one can equivalently define $H
\rtensor{}{\nu}{} K$ as the completion of an algebraic tensor product
$D(H;\nu) \odot K$, where $D(H;\nu) \subseteq H$ is a subspace
equipped with an $N^{op}$-valued inner product.

In this section, we formalize a $C^{*}$-algebraic analogue of this
construction. It seems that in the setting of $C^{*}$-algebras, the
analogues of the spaces $D(K,\nu)$ and $D(H;\nu^{op})$ can not be
reconstructed from an (anti)representation of a $C^{*}$-algebra
equipped with a weight alone, but need to be given explicitly in the
form of $C^{*}$-factorizations (Subsection
\ref{subsection:rtp-factorization}). The definition of the
$C^{*}$-relative tensor product is then straightforward (Subsection
\ref{subsection:rtp-definition}) and essentially coincides with the
relative tensor product in the setting of von Neumann algebras
(Subsection \ref{subsection:rtp-comparison}).

\subsection{$C^{*}$-bases and $C^{*}$-factorizations} 
\label{subsection:rtp-factorization}

To define a $C^{*}$-algebraic analogue of the relative tensor product
$H \rtensor{}{\nu}{} K$ described above, we replace
\begin{itemize}
\item the von Neumann algebra $N$ and the weight $\nu$ by a
  $C^{*}$-base $\epsilon$ (Definition \ref{definition:rtp-base}
  and Example \ref{example:rtp-base-weight}), and
\item the (anti)representations of $N$ on $H$ and $K$ by
  $C^{*}$-(anti)factorizations of $H$ and $K$ (Definition
  \ref{definition:rtp-factorization})  relative to $\epsilon$.
\end{itemize}
\begin{definition} \label{definition:rtp-base} A {\em $C^{*}$-base} is
  a triple $\cbasel{B}{H}$, shortly written $\cbases{B}{H}$,
  consisting of a Hilbert space $\frakH$ and two commuting
  nondegenerate $C^{*}$-algebras $\frakB,\frakB^{\dagger} \subseteq
  {\cal L}(\frakH)$.

  Two $C^{*}$-bases $\cbases{B}{H}$ and $\cbasesc$ are equivalent
  if $\frakC=\Ad_{U}(\frakB)$ and
  $\frakC^{\dagger}=\Ad_{U}(\frakB^{\dagger})$ for some unitary
  $U\colon \frakH \to \frakK$.  
\end{definition}

Every proper KMS-weight on a $C^{*}$-algebra gives rise to a
$C^{*}$-base:
\begin{example} \label{example:rtp-base-weight}
  Let $B$ be a $C^{*}$-algebra with a proper KMS-weight $\mu$.
  As usual, we put $\Nmu:=\{b \in B\mid \mu(b^{*}b) < \infty\}$ and
  denote by $(H_{\mu},\Lambda_{\mu},\pi_{\mu})$ a GNS-construction for
  $\mu$, i.e., $H_{\mu}$ is a Hilbert space, $\Lambda_{\mu} \colon
  \Nmu \to H_{\mu}$ is a linear map with dense image and $\pi_{\mu}
  \colon B \to {\cal L}(H_{\mu})$ is a representation such that
  $\langle \Lambda_{\mu}(b)|\Lambda_{\mu}(b')\rangle=\mu(b^{*}b')$ and
  $\pi_{\mu}(c)\Lambda_{\mu}(b') = \Lambda_{\mu}(cb')$ for all $b,b'
  \in \Nmu$ and $c\in B$. Moreover, we denote by $J_{\mu} \colon
  H_{\mu} \to H_{\mu}$ the modular conjugation, which is a
  conjugate-linear isometric isomorphism.
  Then the $C^{*}$-algebras $\pi_{\mu}(B)$ and
  $J_{\mu}\pi_{\mu}(B)J_{\mu}$ commute and the triple
  $(H_{\mu},\pi_{\mu}(B),J_{\mu}\pi_{\mu}(B)J_{\mu})$
  is a $C^{*}$-base.
  The opposite $C^{*}$-base is
  equivalent to the $C^{*}$-base associated to the opposite
  weight $\mu^{op}$ on $B^{op}$, given by
  $\mu^{op}((b^{op})^{*}b^{op})=\mu(bb^{*})$ for all $b \in
  B$. Indeed, $\mu^{op}$ is a proper KMS-weight,
  $\mathfrak{N}_{\mu^{op}}=(\Nmu^{*})^{op}$, and the triple
  $(H_{\mu^{op}},\Lambda_{\mu^{op}},\pi_{\mu^{op}})$ given by
  \begin{align*}
    H_{\mu^{op}}&:=H_{\mu}, &
    \Lambda_{\mu^{op}}(b^{op})&:=J_{\mu}\Lambda_{\mu}(b^{*}), &
    \pi_{\mu^{op}}(c^{op}) &= J_{\mu}\pi_{\mu}(c^{*}) J_{\mu}
  \end{align*}
  for all $b \in \Nmu^{*}$ and $c \in B$ is a GNS-construction for
  $\mu^{op}$. In particular,
  $\pi_{\mu^{op}}(B^{op})=J_{\mu}\pi_{\mu}(B)J_{\mu}$, and replacing
  $\mu$ by $\mu^{op}$, we find
  $\pi_{\mu}(B)=J_{\mu^{op}}\pi_{\mu^{op}}(B^{op})J_{\mu^{op}}$.
\end{example}

\begin{definition} \label{definition:rtp-factorization}
  A {\em $C^{*}$-factorization} of a Hilbert space $H$ with respect to
  a $C^{*}$-base $\cbases{B}{H}$ is a closed subspace $\alpha
  \subseteq {\cal L}(\frakH,H)$ satisfying $\lnspan
  \alpha^{*}\alpha\rnspan = \frakB$, $\lnspan \alpha \frakB\rnspan =
  \alpha$, and $\lnspan \alpha \frakH \rnspan = H$.  We denote the set
  of all $C^{*}$-factorizations of a Hilbert space $H$ with respect to
  a $C^{*}$-base $\cbases{B}{H}$ by $\cfact(H;\cbases{B}{H})$.
\end{definition}

Let $\alpha$ be a $C^{*}$-factorization of a Hilbert space $H$ with
respect to a $C^{*}$-base $\cbases{B}{H}$. Then $\alpha$
is a concrete $C^{*}$-module and a full right $C^{*}$-module over $\frakB$
with respect to the inner product $\langle \xi|\xi'\rangle
:=\xi^{*}\xi'$.  Moreover, there exists a unitary
\begin{align} \label{eq:rtp-iso}
  \alpha \tr \frakH \mycong H, \quad \xi
      \tr \zeta \mapsto \xi\zeta.
\end{align}
From now on, we shall identify $\alpha \tr \frakH$ with $H$ as above
without further notice. 

By Proposition \ref{proposition:itp-operators}, there exists a
unique representation
\begin{align*}
  \rho_{\alpha} \colon \frakB^{\dagger} \to {\cal L}(\alpha \tr \frakH)
      \cong {\cal L}(H)
\end{align*}
such that for all $b^{\dagger} \in \frakB^{\dagger}$ and $\xi\in
\alpha$, $\zeta \in \frakH$,
\begin{align*}
  \rho_{\alpha}(b^{\dagger})(\xi \tr \zeta) = \xi \tr
  b^{\dagger}\zeta \quad \text{or, equivalently,}  \quad
  \rho_{\alpha}(b^{\dagger})\xi\zeta = \xi b^{\dagger}\zeta.
\end{align*}
Clearly, this representation is nondegenerate and faithful.

 Let $K$ be a  Hilbert space. Then each unitary  $U\colon H
 \to K$ induces a map
 \begin{align*}
   V_{*} \colon \cfact(H;\cbases{B}{H}) \to \cfact(K;\cbases{B}{H}),  \quad \alpha
   \mapsto V\alpha.
 \end{align*}
Let $\beta$ be a $C^{*}$-factorization of $K$ with respect to
$\cbases{B}{H}$. We put
\begin{align*}
  {\cal L}(H_{\alpha},K_{\beta})&:= \big\{ T \in {\cal L}(H,K)
  \,\big|\, T\alpha \subseteq \beta, \, T^{*}\beta \subseteq \alpha\big\}.
\end{align*}
Evidently, ${\cal L}(H_{\alpha},K_{\beta})^{*} = {\cal
  L}(K_{\beta},H_{\alpha})$. Let $T \in {\cal
  L}(H_{\alpha},K_{\beta})$. Then the  map
\begin{align*}
  T_{\alpha} \colon \alpha
  \to \beta, \quad \xi \mapsto T\xi
\end{align*}
is an adjointable operator of $C^{*}$-modules with adjoint
$(T_{\alpha})^{*}=(T^{*})_{\beta}$. With respect to the isomorphism
$\alpha \tr \frakH \cong H$, we have $T \equiv T_{\alpha} \tr
\Id_{\frakH}$ and
\begin{align} \label{eq:rtp-op-commute}
  T \rho_{\alpha}(b^{\dagger}) \equiv (T_{\alpha} \tr \Id_{\frakH})
  (\Id_{\alpha} \tr b^{\dagger}) = (\Id_{\beta} \tr
  b^{\dagger})(T_{\alpha} \tr \Id_{\frakH})\equiv \rho_{\beta}(b^{\dagger})T
\end{align}
for all $b^{\dagger} \in B^{\dagger}$.
 
We shall consider Hilbert spaces equipped with several
$C^{*}$-factorizations that are compatible in the following sense:
\begin{definition} \label{definition:rtp-compatible} Let $\alpha$ be a
  $C^{*}$-factorization of a Hilbert space $H$ with respect to a
  $C^{*}$-base $\cbases{B}{H}$, and let $\cbasesc$ be another
  $C^{*}$-base. We call a $C^{*}$-factorization $\beta \in
  \cfact(H;\cbasesc)$ {\em compatible} with $\alpha$, written
  $\alpha \perp \beta$, if $\lnspan
  \rho_{\alpha}(\frakB^{\dagger})\beta\rnspan =\beta $ and $\lnspan
  \rho_{\beta}(\frakC^{\dagger})\alpha\rnspan=\alpha$, and put
  \begin{align*}
   \cfact(H_{\alpha};\cbasesc):=\{ \beta \in
  \cfact(H;\cbasesc) \mid \alpha \perp \beta\}.
  \end{align*}
\end{definition}
\begin{example}
  If $\cbases{B}{H}$ is a $C^{*}$-base, then
  \begin{align*}
    \frakB &\in
  \cfact(\frakH;\cbases{B}{H}), 
\
  \rho_{\frakB} = \Id_{\frakB^{\dagger}}, &
  \frakB^{\dagger} &\in \cfact(\frakH;\cbaseos{B}{H}), \
  \rho_{\frakB^{\dagger}} =\Id_{\frakB},
  \end{align*}
and hence $\frakB \perp \frakB^{\dagger}$.
\end{example}
\begin{remark} \label{remark:rtp-commute}
  Let $H$, $\cbases{B}{H}$, $\cbasesc$ and $\alpha$, $\beta$ be as
  in Definition \ref{definition:rtp-compatible}. If $\alpha \perp
  \beta$, then $\rho_{\alpha}(\frakB^{\dagger}) \subseteq
  {\cal L}(H_{\beta})$, $\rho_{\beta}(\frakC^{\dagger})
  \subseteq {\cal L}(H_{\alpha})$, and Equation
  \eqref{eq:rtp-op-commute} implies that
  $\rho_{\alpha}(\frakB^{\dagger})$ and
  $\rho_{\beta}(\frakC^{\dagger})$ commute.
\end{remark}

\subsection{Definition and basic properties}
\label{subsection:rtp-definition}

Assume that we are given the following data:
\begin{gather} \label{eq:rtp-data}
\begin{gathered}
  \text{Hilbert spaces } H,K, \ \text{a $C^{*}$-base } \cbasesb, \
  \text{and} \\
  \text{$C^{*}$-factorizations } \alpha \in \cfact(H;\cbasesb), \
  \beta \in \cfact(H;\cbaseosb)
  \end{gathered}
\end{gather}
 Then we can form the internal tensor product
\begin{align*}
\HfibreK := \alpha \tr \frakH \tl \beta,
\end{align*}
and the isomorphism \eqref{eq:rtp-iso} induces isomorphisms
\begin{gather} \label{eq:rtp-space}
    \alpha \tr_{\rho_{\beta}} K \cong \HfibreK \cong
    H_{\rho_{\alpha}} \tl \beta, \quad
    \xi \tr \eta \zeta \equiv \xi \tr \zeta \tl \eta \equiv \xi \zeta
    \tl \eta.
\end{gather}
From now on, we shall use these isomorphisms without further
mentioning.

The definition of $\HfibreK$ is functorial in the following sense. Let
$L$, $M$ be Hilbert spaces,  $\gamma \in \cfact(L;\cbasesb)$,
$\delta \in \cfact(M;\cbaseosb)$, and $S \in {\cal
  L}(H,L)$, $T \in {\cal
  L}(K,M)$.  We define an operator
\begin{align*}
 S \rtensorh T\in {\cal L}\big(\HfibreK, \, L
    \rtensor{\gamma}{\frakH}{\delta} M\big) 
\end{align*}
in the following cases, using Proposition
\ref{proposition:itp-operators} and the isomorphisms \eqref{eq:rtp-space}:
\begin{enumerate}
\item If $S \in {\cal L}(H_{\alpha},L_{\gamma})$ and
  $T\rho_{\beta}(b)=\rho_{\delta}(b)T$ for all $b \in \frakB$, we put
  $S \rtensorh T \equiv S_{\alpha} \tr T \in {\cal L}(\alpha
  \tr_{\rho_{\beta}} K, \gamma \tr_{\rho_{\delta}} M)$.
\item  If $T \in {\cal L}(K_{\beta},M_{\delta})$ and
  $S\rho_{\alpha}(b^{\dagger}) = \rho_{\gamma}(b^{\dagger})S$ for all
  $b^{\dagger} \in \frakB^{\dagger}$, we put $S \rtensorh T \equiv S
  \tl T_{\beta} \in {\cal L}(H {_{\rho_{\alpha}}\tl} \beta, L
  {_{\rho_{\gamma}}\tl} M)$.
\end{enumerate}
If $S \in {\cal L}(H_{\alpha},L_{\gamma})$ and $T \in {\cal
  L}(K_{\beta},M_{\delta})$, we get $S \rtensorh T = S_{\alpha} \tr
\Id_{\frakH} \tl T_{\beta}$ in both cases. 
In the special case where $S=\Id_{H}$ or $T=\Id_{K}$, we abbreviate
\begin{align*}
  S_{\leg{1}} &:=  S \rtensorh \Id \colon\HfibreK \to L
  \rtensor{\gamma}{\frakH}{\beta} K, \\
  T_{\leg{2}} &:= \Id \rtensorh T \colon  \HfibreK \to H
  \rtensor{\alpha}{\frakH}{\delta} M.
\end{align*}

Given $C^{*}$-algebras $C$, $D$ and $*$-homomorphisms $\rho \colon C
\to \rho_{\alpha}(\frakB^{\dagger})' \subseteq {\cal L}(H)$, $\sigma
\colon D \to \rho_{\beta}(\frakB)' \subseteq {\cal L}(K)$, we put
\begin{align*}
  \rho_{\leg{1}} \colon C \to {\cal L}(\HfibreK), \ c \mapsto
  \rho(c)_{\leg{1}} = \rho(c) \tl \Id_{\beta}, \\
\sigma_{\leg{2}} \colon D\to {\cal L}(\HfibreK), \ d \mapsto
\sigma(d)_{\leg{2}} = \Id_{\alpha} \tr \sigma(d).
\end{align*}
Combining the leg notation and the ket-bra notation, we define 
for each $\xi \in \alpha$ and $\eta \in \beta$ two pairs
of adjoint operators
\begin{align*}
  |\xi\rangle_{\leg{1}} \colon K &\to \HfibreK, \ \zeta \mapsto \xi
  \tr \zeta, & \langle \xi|_{\leg{1}}:=|\xi\rangle_{\leg{1}}^{*}\colon
  \xi' \tr \zeta &\mapsto
  \rho_{\beta}(\langle\xi|\xi'\rangle)\zeta, \\
  |\eta\rangle_{\leg{2}} \colon H &\to \HfibreK, \ \zeta \mapsto \zeta
  \tl \eta, & \langle\eta|_{\leg{2}} := |\eta\rangle_{\leg{2}}^{*}
  \colon \zeta \tl\eta &\mapsto \rho_{\alpha}(\langle
  \eta|\eta'\rangle)\zeta.
\end{align*}
We put $\kalpha{1} := \big\{ |\xi\rangle_{\leg{1}} \,\big|\, \xi \in
\alpha\big\}$ and similarly define $\balpha{1}$, $\kbeta{2}$,
$\bbeta{2}$.

\begin{proposition} \label{proposition:rtp-factorization-maps}
  Let $H, K, \cbases{B}{H}, \alpha, \beta$ be as in
  \eqref{eq:rtp-data}, and let $\cbasesc$, $\cbases{D}{L}$ be
  $C^{*}$-bases. Then there exist compatibility-preserving maps
    \begin{align*}
      \cfact(H_{\alpha};\cbasesc) &\to
      \cfact(\HfibreK;\cbasesc), & \gamma &\mapsto \gamma \lt
      \beta:= \lnspan
      \kbeta{2}\gamma\rnspan, \\
      \cfact(K_{\beta};\cbases{D}{L}) &\to
      \cfact(\HfibreK;\cbases{D}{L}), & \delta &\mapsto \alpha \rt
      \delta:= \lnspan \kalpha{1}\delta\rnspan,
    \end{align*}
    For all $\gamma \in
    \cfact(H_{\alpha};\cbasesc)$ and $\delta \in
    \cfact(K_{\beta};\cbases{D}{L})$,
    \begin{align*}
   \rho_{(\gamma \lt \beta)}
    &=(\rho_{\gamma})_{\leg{1}}, & \rho_{(\alpha \rt \delta)} &=
    (\rho_{\delta})_{\leg{2}}, & \gamma \lt \beta &\perp \alpha
    \rt \delta.
    \end{align*}
\end{proposition}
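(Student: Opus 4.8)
The plan is to verify the three defining axioms of a $C^{*}$-factorization for $\gamma \lt \beta = \lnspan \kbeta{2}\gamma\rnspan$ and $\alpha \rt \delta = \lnspan \kalpha{1}\delta\rnspan$, then to identify the associated representations, and finally to deduce the compatibility (which is precisely the asserted compatibility-preservation). Everything rests on the elementary identities $\langle\eta|_{\leg{2}}|\eta'\rangle_{\leg{2}} = \rho_{\alpha}(\langle\eta|\eta'\rangle)$ and $\langle\xi|_{\leg{1}}|\xi'\rangle_{\leg{1}} = \rho_{\beta}(\langle\xi|\xi'\rangle)$ for $\eta,\eta'\in\beta$ and $\xi,\xi'\in\alpha$, which are immediate from the definitions of the ket-bra operators. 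Since $\gamma\lt\beta$ and $\alpha\rt\delta$ are treated by symmetric arguments, I spell out only the first and merely indicate the mirror statements.

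For $g,g'\in\gamma$ and $\eta,\eta'\in\beta$ the first identity gives $(|\eta\rangle_{\leg{2}}g)^{*}(|\eta'\rangle_{\leg{2}}g') = g^{*}\rho_{\alpha}(\langle\eta|\eta'\rangle)g'$, so that $\lnspan(\gamma\lt\beta)^{*}(\gamma\lt\beta)\rnspan = \lnspan\gamma^{*}\rho_{\alpha}(\frakB^{\dagger})\gamma\rnspan$, using $\lnspan\beta^{*}\beta\rnspan=\frakB^{\dagger}$. By the compatibility $\alpha\perp\gamma$ built into the domain one has $\lnspan\rho_{\alpha}(\frakB^{\dagger})\gamma\rnspan=\gamma$, whence this equals $\lnspan\gamma^{*}\gamma\rnspan=\frakC$. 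The other two axioms are easier: $(|\eta\rangle_{\leg{2}}g)c = |\eta\rangle_{\leg{2}}(gc)$ with $\lnspan\gamma\frakC\rnspan=\gamma$ gives $\lnspan(\gamma\lt\beta)\frakC\rnspan=\gamma\lt\beta$, and $(|\eta\rangle_{\leg{2}}g)w = gw\tl\eta$ with $\lnspan\gamma\frakK\rnspan=H$ gives $\lnspan(\gamma\lt\beta)\frakK\rnspan = \lnspan H\tl\beta\rnspan = H_{\rho_{\alpha}}\tl\beta = \HfibreK$. Hence $\gamma\lt\beta\in\cfact(\HfibreK;\cbasesc)$, and symmetrically, using $\lnspan\rho_{\beta}(\frakB)\delta\rnspan=\delta$ from $\beta\perp\delta$, one gets $\alpha\rt\delta\in\cfact(\HfibreK;\cbases{D}{L})$.

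To identify the representation I note that, by Remark \ref{remark:rtp-commute}, $\alpha\perp\gamma$ yields $\rho_{\gamma}(\frakC^{\dagger})\subseteq{\cal L}(H_{\alpha})\cap\rho_{\alpha}(\frakB^{\dagger})'$, so $(\rho_{\gamma})_{\leg{1}}$ is defined and admits two descriptions under \eqref{eq:rtp-space}: as $\rho_{\gamma}(c^{\dagger})\tl\Id_{\beta}$ on $H_{\rho_{\alpha}}\tl\beta$, and, by case (i) of the functoriality construction, as $(\rho_{\gamma}(c^{\dagger})|_{\alpha})\tr\Id_{K}$ on $\alpha\tr_{\rho_{\beta}}K$. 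Evaluating both $\rho_{(\gamma\lt\beta)}(c^{\dagger})$ and $(\rho_{\gamma})_{\leg{1}}(c^{\dagger})$ on a generator $(|\eta\rangle_{\leg{2}}g)w = gw\tl\eta$ yields $gc^{\dagger}w\tl\eta$ in both cases: for the former by the defining relation $\rho_{(\gamma\lt\beta)}(c^{\dagger})\,s\,v = s\,c^{\dagger}v$, for the latter because $\rho_{\gamma}(c^{\dagger})(gw)=gc^{\dagger}w$. Since such generators span $\HfibreK$, I conclude $\rho_{(\gamma\lt\beta)}=(\rho_{\gamma})_{\leg{1}}$, and symmetrically $\rho_{(\alpha\rt\delta)}=(\rho_{\delta})_{\leg{2}}$.

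Finally, for $\gamma\lt\beta\perp\alpha\rt\delta$ I let each representation act on the generators of the other factorization. Using $\rho_{(\gamma\lt\beta)}(c^{\dagger})=(\rho_{\gamma}(c^{\dagger})|_{\alpha})\tr\Id_{K}$ on $\alpha\tr_{\rho_{\beta}}K$ and $|\xi\rangle_{\leg{1}}k=\xi\tr k$, I obtain $\rho_{(\gamma\lt\beta)}(c^{\dagger})|\xi\rangle_{\leg{1}} = |\rho_{\gamma}(c^{\dagger})\xi\rangle_{\leg{1}}$, where $\rho_{\gamma}(c^{\dagger})\xi\in\alpha$ since $\rho_{\gamma}(\frakC^{\dagger})\subseteq{\cal L}(H_{\alpha})$. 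Hence $\lnspan\rho_{(\gamma\lt\beta)}(\frakC^{\dagger})(\alpha\rt\delta)\rnspan = \lnspan|\rho_{\gamma}(\frakC^{\dagger})\alpha\rangle_{\leg{1}}\delta\rnspan = \lnspan\kalpha{1}\delta\rnspan = \alpha\rt\delta$, the middle equality using $\lnspan\rho_{\gamma}(\frakC^{\dagger})\alpha\rnspan=\alpha$ from $\alpha\perp\gamma$. The mirror computation $\rho_{(\alpha\rt\delta)}(d^{\dagger})|\eta\rangle_{\leg{2}}=|\rho_{\delta}(d^{\dagger})\eta\rangle_{\leg{2}}$, together with $\rho_{\delta}(\frakD^{\dagger})\subseteq{\cal L}(K_{\beta})$ and $\lnspan\rho_{\delta}(\frakD^{\dagger})\beta\rnspan=\beta$ from $\beta\perp\delta$, gives $\lnspan\rho_{(\alpha\rt\delta)}(\frakD^{\dagger})(\gamma\lt\beta)\rnspan=\gamma\lt\beta$, so $\gamma\lt\beta\perp\alpha\rt\delta$. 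The main obstacle here is not depth but bookkeeping: keeping the three isomorphic pictures of $\HfibreK$ in \eqref{eq:rtp-space} consistent and verifying that $\rho_{\gamma}(\frakC^{\dagger})$ and $\rho_{\delta}(\frakD^{\dagger})$ restrict to the legs $\alpha$ and $\beta$, so that the functoriality cases (i) and (ii) apply; the commutation statements of Remark \ref{remark:rtp-commute} are exactly what supplies these restrictions.
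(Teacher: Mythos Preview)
Your proof is essentially the same as the paper's and is correct for everything it proves. There is, however, one small omission: the phrase ``compatibility-preserving'' in the statement does \emph{not} refer to the cross-compatibility $\gamma\lt\beta \perp \alpha\rt\delta$ (that is the separate, final displayed relation), but rather to the assertion that each map preserves the relation $\perp$ on its own domain, i.e.\ that $\gamma\perp\gamma'$ implies $\gamma\lt\beta \perp \gamma'\lt\beta$ for $\gamma,\gamma'\in\cfact(H_{\alpha};\,\cdot\,)$, and similarly for $\delta\mapsto\alpha\rt\delta$. The paper's proof contains precisely this extra paragraph. The missing argument is of the same shape as your final step: using $\rho_{(\gamma\lt\beta)}=(\rho_{\gamma})_{\leg{1}}$ one computes
\[
\lnspan \rho_{(\gamma\lt\beta)}(\frakC^{\dagger})\kbeta{2}\gamma'\rnspan
= \lnspan \kbeta{2}\rho_{\gamma}(\frakC^{\dagger})\gamma'\rnspan
= \lnspan \kbeta{2}\gamma'\rnspan,
\]
the last equality by $\gamma\perp\gamma'$; the symmetric inclusion is identical. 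So the gap is purely one of reading the statement, not of method.
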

\begin{proof}
  Let $\gamma \in \cfact(H_{\alpha};\cbasesc)$. Then $\gamma \lt
  \beta = \lnspan \kbeta{2}\gamma\rnspan \subseteq {\cal
    L}(\frakK,\HfibreK)$ is a $C^{*}$-factorization of $\HfibreK$
  with respect to $\cbasesc$ because
  \begin{gather*}
    \lnspan \gamma^{*} \bbeta{2} \kbeta{2} \gamma\rnspan = \lnspan
    \gamma^{*} \rho_{\beta}(\frakB)\gamma\rnspan = \lnspan
    \gamma^{*}\gamma\rnspan = \frakC, \\  \lnspan
    \kbeta{2}\gamma\frakC\rnspan =\lnspan
    \kbeta{2}\gamma\rnspan, \quad
    \lnspan \kbeta{2}\gamma \frakK\rnspan = \lnspan \kbeta{2}
    H\rnspan = \HfibreK.
  \end{gather*}
  The relation $\rho_{(\gamma \lt \beta)}=(\rho_{\gamma})_{\leg{1}}$
  is evident.  If $\gamma' \in \cfact(H_{\alpha};\cbasesc)$ and
  $\gamma' \perp \gamma$, then $\gamma' \lt \beta \perp \gamma \lt
  \beta$ because
  \begin{align*}
    \lnspan \rho_{(\gamma \lt \beta)}(\frakC^{\dagger}) \kbeta{2}\gamma'\rnspan
    = \lnspan (\rho_{\gamma})_{\leg{1}}(\frakC^{\dagger})
    \kbeta{2}\gamma'\rnspan = \lnspan \kbeta{2}
    \rho_{\gamma}(\frakC^{\dagger})\gamma'\rnspan = \lnspan \kbeta{2}
    \gamma'\rnspan
  \end{align*}
  and similarly $\lnspan \rho_{(\gamma' \lt \beta)}(\frakC^{\dagger})
  \kbeta{2}\gamma\rnspan = \lnspan \kbeta{2} \gamma\rnspan$.

  Let $\delta \in \cfact(K_{\beta};\cbases{D}{L})$.  Similar
  arguments as above show that $\alpha \rt \delta \in
  \cfact(\HfibreK;\cbases{D}{L})$, that $\rho_{(\alpha\rt
    \delta)}=(\rho_{\delta})_{\leg{1}}$, and that the map $\delta'
  \mapsto \alpha \rt \delta'$ preserves compatibility.
  Finally, $\gamma \lt \beta \perp \alpha \lt \delta$ because
  \begin{align*}
    \lnspan \rho_{(\gamma \lt \beta)}(\frakC^{\dagger})
    \kalpha{1}\delta\rnspan = \lnspan
    (\rho_{\gamma})_{\leg{1}}(\frakC^{\dagger})
    \kalpha{1}\delta\rnspan = \lnspan
    |\rho_{\gamma}(\frakC^{\dagger})\alpha\rangle_{\leg{1}}\delta\rnspan
    = \lnspan \kalpha{1}\delta\rnspan
  \end{align*}
  and similarly $\lnspan \rho_{(\alpha \rt
    \delta)}(\frakD^{\dagger})\kbeta{2}\gamma\rnspan = \lnspan
  \kbeta{2}\gamma\rnspan$.
\end{proof}

Evidently, the relative tensor product is symmetric in the following sense:
\begin{proposition} \label{proposition:rtp-symmetric}
  Let $H, K, \cbases{B}{H}, \alpha, \beta$ be as in
  \eqref{eq:rtp-data}.  There exists a unitary
    \begin{align*}
      \Sigma\colon \HfibreK \cong \alpha \tr \frakH \tl \beta &\to
      \beta
      \tr \frakH \tl \alpha \cong K \rtensor{\beta}{\frakH}{\alpha} H, \\
      \xi \tr \zeta \tl \eta &\mapsto \eta \tr \zeta \tl \xi.
    \end{align*}
    For each $C^{*}$-base $\cbasesc$ and all $\gamma \in
    \cfact(H_{\alpha};\cbasesc)$, $\delta \in
    \cfact(K_{\beta};\cbasesc)$,\usetagform{empty}
    \begin{align*}
      \Sigma_{*}(\gamma \lt \beta) &= \beta \rt \gamma, &
      \Sigma_{*}(\alpha \rt \delta) &=\delta \lt \alpha. \tag{\qed}
    \end{align*}\usetagform{default}
\end{proposition}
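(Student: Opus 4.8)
The plan is to check both assertions directly from the definitions, the only substantive input being that $\frakB$ and $\frakB^{\dagger}$ commute. First, for the existence of $\Sigma$, I would compute the inner product of two elementary tensors in the source $\HfibreK = \alpha \tr \frakH \tl \beta$. Unwinding first the flipped internal tensor product $(\alpha \tr \frakH)\,{_{\rho_{\alpha}}\tl}\,\beta$ and then the internal tensor product $\alpha \tr \frakH$, and using $\rho_{\alpha}(b^{\dagger})(\xi \tr \zeta) = \xi \tr b^{\dagger}\zeta$ together with $\langle\xi|\xi'\rangle = \xi^{*}\xi' \in \frakB$ and $\langle\eta|\eta'\rangle = \eta^{*}\eta' \in \frakB^{\dagger}$, one finds
\[
  \langle \xi \tr \zeta \tl \eta \,|\, \xi' \tr \zeta' \tl \eta'\rangle = \langle \zeta \,|\, \xi^{*}\xi'\,\eta^{*}\eta'\,\zeta'\rangle_{\frakH}.
\]
The analogous computation in $\beta \tr \frakH \tl \alpha \cong K \rtensor{\beta}{\frakH}{\alpha} H$ for the tensors $\eta \tr \zeta \tl \xi$ and $\eta' \tr \zeta' \tl \xi'$ gives $\langle \zeta \,|\, \eta^{*}\eta'\,\xi^{*}\xi'\,\zeta'\rangle_{\frakH}$. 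Since $\xi^{*}\xi' \in \frakB$ and $\eta^{*}\eta' \in \frakB^{\dagger}$ commute, the two expressions coincide. Hence $\xi \tr \zeta \tl \eta \mapsto \eta \tr \zeta \tl \xi$ preserves inner products on the dense span of elementary tensors and extends to an isometry $\Sigma$; its image is the dense span of the $\eta \tr \zeta \tl \xi$, so $\Sigma$ is onto, hence unitary, the map in the opposite direction being its inverse.

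For the behaviour on factorizations, I would next record the effect of $\Sigma$ on the leg maps. Chasing the identifications \eqref{eq:rtp-space} through $\Sigma$ shows that for $\eta \in \beta$ and $\xi \in \alpha$,
\[
  \Sigma\, |\eta\rangle_{\leg{2}} = |\eta\rangle_{\leg{1}}, \qquad \Sigma\, |\xi\rangle_{\leg{1}} = |\xi\rangle_{\leg{2}},
\]
where the right-hand sides are the leg maps of the flipped product $K \rtensor{\beta}{\frakH}{\alpha} H$; indeed $\Sigma(\xi \tr \zeta \tl \eta) = \eta \tr \zeta \tl \xi$ reads, under $H \cong \alpha \tr \frakH$ and $K \cong \beta \tr \frakH$, precisely as $\Sigma(h \tl \eta) = \eta \tr h$ and $\Sigma(\xi \tr k) = k \tl \xi$. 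Recalling that $V_{*}$ is left composition with the unitary and the definitions $\gamma \lt \beta = \lnspan \kbeta{2}\gamma\rnspan$, $\alpha \rt \delta = \lnspan \kalpha{1}\delta\rnspan$, I then compute
\[
  \Sigma_{*}(\gamma \lt \beta) = \lnspan \Sigma\,\kbeta{2}\,\gamma\rnspan = \lnspan \kbeta{1}\,\gamma\rnspan = \beta \rt \gamma,
\]
and likewise $\Sigma_{*}(\alpha \rt \delta) = \lnspan \Sigma\,\kalpha{1}\,\delta\rnspan = \lnspan \kalpha{2}\,\delta\rnspan = \delta \lt \alpha$, where the right-hand sides are now formed inside $K \rtensor{\beta}{\frakH}{\alpha} H$.

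The only genuine content is the commutativity of $\frakB$ and $\frakB^{\dagger}$ exploited in the inner-product computation; the rest is bookkeeping. The point requiring care is that $\Sigma$ interchanges the two tensor legs, so that the second-leg ket operators $\kbeta{2}$ in the source become first-leg operators $\kbeta{1}$ in the target, and dually $\kalpha{1} \mapsto \kalpha{2}$. Keeping track of which factorization occupies which leg after applying $\Sigma$ is where a mistake is most likely.
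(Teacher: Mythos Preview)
Your argument is correct. The paper gives no proof at all: it prefaces the proposition with ``Evidently'' and ends the statement with a \qed, treating both the existence of $\Sigma$ and the formulas for $\Sigma_{*}$ as immediate from the definitions. Your write-up is exactly the routine verification the paper leaves to the reader, and the key observation you isolate --- that the inner-product computation reduces to the commutativity of $\frakB$ and $\frakB^{\dagger}$ --- is precisely the content of the word ``evidently''.
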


The space $\frakH$ together with the $C^{*}$-factorizations $\frakB$ or
$\frakB^{\dagger}$, respectively, is a left or right unit for the
relative tensor product:
\begin{proposition} \label{proposition:rtp-unital}
  Let $H, K, \cbases{B}{H}, \alpha, \beta$ be as in
  \eqref{eq:rtp-data}.   There exist unitaries
\begin{align*}
  \Phi \colon \frakH \rtensor{\frakB}{\frakH}{\beta} K &\to K, &
  \Psi \colon H \rtensor{\alpha}{\frakH}{\frakB^{\dagger}} \frakH
  &\to H, \\ b \tr \zeta \tl \eta &\mapsto \eta b\zeta, & \xi \tr
  \zeta \tl b^{\dagger} &\mapsto \xi b^{\dagger} \zeta.
\end{align*}
For each $C^{*}$-base $\cbasesc$ and all $\gamma \in
\cfact(H_{\alpha}; \cbasesc)$, $\delta \in
\cfact(K_{\beta};\cbasesc)$,
\begin{align*} 
  \Phi_{*}(\frakB^{\dagger} \lt \beta) &= \beta, & \Phi_{*}(\frakB \rt \delta)
  &= \delta, & \Psi_{*}(\alpha \rt \frakB) &= \alpha, &
  \Psi_{*}(\gamma \lt \frakB^{\dagger}) &= \gamma. 
\end{align*}
\end{proposition}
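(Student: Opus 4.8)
The plan is to build $\Phi$ and $\Psi$ out of the canonical identifications \eqref{eq:rtp-iso} and \eqref{eq:rtp-space} (so that unitarity is essentially automatic), and then to obtain the four equalities of factorizations by computing the operators that $\Phi$ and $\Psi$ produce from the generating kets $|\eta\rangle_{\leg{2}}$, $|b\rangle_{\leg{1}}$, $|\xi\rangle_{\leg{1}}$, $|b^{\dagger}\rangle_{\leg{2}}$ and then invoking the factorization and compatibility conditions of Definitions \ref{definition:rtp-factorization} and \ref{definition:rtp-compatible}.

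First I would establish that $\Phi$ and $\Psi$ are unitary. The map $\Psi$ is the cleaner of the two: since $\frakB^{\dagger} \in \cfact(\frakH;\cbaseosb)$ satisfies $\rho_{\frakB^{\dagger}} = \Id_{\frakB}$, the identification \eqref{eq:rtp-space} gives $H \rtensor{\alpha}{\frakH}{\frakB^{\dagger}} \frakH \cong \alpha \tr_{\rho_{\frakB^{\dagger}}} \frakH = \alpha \tr \frakH$, carrying $\xi \tr \zeta \tl b^{\dagger}$ to $\xi \tr b^{\dagger}\zeta$, and composing with the canonical unitary $\alpha \tr \frakH \cong H$ of \eqref{eq:rtp-iso} recovers $\Psi$ as a composite of two established unitaries. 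For $\Phi$ I would argue directly: writing $\frakH \rtensor{\frakB}{\frakH}{\beta} K = \frakB \tr \frakH \tl \beta$ and using $\rho_{\frakB} = \Id_{\frakB^{\dagger}}$, the generator $b \tr \zeta \tl \eta$ is identified under \eqref{eq:rtp-space} with $(b\zeta) \tl \eta$, so that $\Phi$ takes the form $w \tl \eta \mapsto \eta w$ for $w \in \frakH$, $\eta \in \beta$; this is isometric since
\[
  \langle w \tl \eta \mid w' \tl \eta'\rangle = \langle w \mid \rho_{\frakB}(\eta^{*}\eta')\,w'\rangle = \langle w \mid \eta^{*}\eta'\,w'\rangle = \langle \eta w \mid \eta' w'\rangle,
\]
the middle equality using $\eta^{*}\eta' \in \frakB^{\dagger}$ and $\rho_{\frakB} = \Id_{\frakB^{\dagger}}$, and it is surjective because its range is $\lnspan \beta \frakB \frakH\rnspan = \lnspan \beta \frakH\rnspan = K$ by nondegeneracy of $\frakB$ and the factorization property $\lnspan\beta\frakH\rnspan = K$. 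Hence $\Phi$ is unitary. (Alternatively, $\Phi$ is the composite of \eqref{eq:rtp-space}, the canonical flip $\Sigma \colon \beta \tr \frakH \mycong \frakH \tl \beta$, and \eqref{eq:rtp-iso} for $\beta$.)

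It then remains to compute the four factorizations. A direct check gives, as operators,
\[
  \Phi|\eta\rangle_{\leg{2}} = \eta, \quad \Phi|b\rangle_{\leg{1}} = \rho_{\beta}(b), \quad \Psi|\xi\rangle_{\leg{1}} = \xi, \quad \Psi|b^{\dagger}\rangle_{\leg{2}} = \rho_{\alpha}(b^{\dagger}),
\]
for $\eta \in \beta$, $b \in \frakB$, $\xi \in \alpha$, $b^{\dagger} \in \frakB^{\dagger}$; for instance $\Phi|b\rangle_{\leg{1}}(\eta\zeta) = \Phi(b \tr \eta\zeta) = \eta b\zeta = \rho_{\beta}(b)\eta\zeta$. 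Inserting these into the definitions $\gamma \lt \beta = \lnspan\kbeta{2}\gamma\rnspan$ and $\alpha \rt \delta = \lnspan\kalpha{1}\delta\rnspan$ of Proposition \ref{proposition:rtp-factorization-maps}, I obtain
\[
  \Phi_{*}(\frakB^{\dagger} \lt \beta) = \lnspan \beta\frakB^{\dagger}\rnspan = \beta, \qquad \Phi_{*}(\frakB \rt \delta) = \lnspan \rho_{\beta}(\frakB)\delta\rnspan = \delta,
\]
\[
  \Psi_{*}(\alpha \rt \frakB) = \lnspan \alpha\frakB\rnspan = \alpha, \qquad \Psi_{*}(\gamma \lt \frakB^{\dagger}) = \lnspan \rho_{\alpha}(\frakB^{\dagger})\gamma\rnspan = \gamma,
\]
where on each line the left-hand equalities are the factorization conditions $\lnspan\beta\frakB^{\dagger}\rnspan = \beta$ and $\lnspan\alpha\frakB\rnspan = \alpha$, while the right-hand ones are the compatibilities $\beta \perp \delta$ and $\alpha \perp \gamma$ encoded in $\delta \in \cfact(K_{\beta};\cbasesc)$ and $\gamma \in \cfact(H_{\alpha};\cbasesc)$.

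I expect the main obstacle to be organizational rather than conceptual: one must track carefully which Hilbert space carries which $C^{*}$-factorization, which of $\frakB$ or $\frakB^{\dagger}$ each representation $\rho$ acts by, and over which $C^{*}$-base (and in which of the two forms of \eqref{eq:rtp-space}) each internal tensor product is taken. If one instead verifies $\Psi$ by a direct inner-product computation, a small additional point surfaces, namely that one must commute an element of $\frakB^{\dagger}$ past an element $\xi^{*}\xi' \in \frakB$, which is legitimate precisely because $\frakB$ and $\frakB^{\dagger}$ commute in the $C^{*}$-base $\cbasesb$; building $\Psi$ from \eqref{eq:rtp-iso} and \eqref{eq:rtp-space} as above sidesteps this entirely.
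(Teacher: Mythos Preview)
Your proof is correct and follows essentially the same approach as the paper: the paper also obtains $\Phi$ and $\Psi$ by combining \eqref{eq:rtp-iso} and \eqref{eq:rtp-space}, and then verifies the factorization equalities via the same computations $\Phi_{*}(\frakB^{\dagger} \lt \beta) = \lnspan \Phi\kbeta{2}\frakB^{\dagger}\rnspan = \lnspan \beta\frakB^{\dagger}\rnspan = \beta$ and $\Phi_{*}(\frakB \rt \delta) = \lnspan \Phi|\frakB\rangle_{\leg{1}}\delta\rnspan = \lnspan \rho_{\beta}(\frakB)\delta\rnspan = \delta$. Your write-up is simply more explicit about the intermediate operator identities $\Phi|\eta\rangle_{\leg{2}} = \eta$, $\Phi|b\rangle_{\leg{1}} = \rho_{\beta}(b)$, etc., which the paper leaves implicit.
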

\begin{proof}
  $\Phi$ and $\Psi$ are obtained by combining \eqref{eq:rtp-iso} and
  \eqref{eq:rtp-space}. The formulas for $\Phi_{*}$ hold because
  $\Phi_{*}(\frakB^{\dagger} \lt \beta) = \lnspan \Phi
  \kbeta{2}\frakB^{\dagger}\rnspan = \lnspan \beta
  \frakB^{\dagger}\rnspan = \beta$ and $ \Phi_{*}(\frakB \rt \delta) =
  \lnspan \Phi |\frakB\rangle_{\leg{1}}\delta\rnspan = \lnspan
  \rho_{\beta}(\frakB)\delta\rnspan = \delta$; the formulas for
  $\Psi_{*}$ follow similarly.
\end{proof}

The relative tensor product is functorial in the following sense:
\begin{proposition}\label{proposition:rtp-functorial}
  Let $H, K, \cbases{B}{H}, \alpha, \beta$ be as in
  \eqref{eq:rtp-data}, $\cbasesc$ a $C^{*}$-base, and $\gamma \in
  \cfact(H_{\alpha};\cbasesc)$, $\delta \in
  \cfact(K_{\beta};\cbasesc)$.  Then
  \begin{align*}
    S \rtensorh T &\in {\cal L}\big((\HfibreK)_{\gamma \lt \beta}\big)
    \ \text{for all } S \in \rho_{\alpha}(\frakB^{\dagger})' \cap
    {\cal L}(H_{\gamma}), \, T \in
    {\cal L}(K_{\beta}), \\
    S \rtensorh T &\in {\cal L}\big((\HfibreK)_{\alpha \rt \delta}\big)
    \ \text{for all } S\in {\cal L}(H_{\alpha}), \, T \in
    \rho_{\beta}(\frakB)' \cap {\cal L}(K_{\delta}).
  \end{align*}
\end{proposition}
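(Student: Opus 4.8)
The plan is to reduce both statements to the explicit action of $S \rtensorh T$ on the operators generating the factorizations $\gamma \lt \beta = \lnspan \kbeta{2}\gamma\rnspan$ and $\alpha \rt \delta = \lnspan \kalpha{1}\delta\rnspan$. First I would check that $S \rtensorh T$ is defined at all. In the first statement the hypotheses $S \in \rho_{\alpha}(\frakB^{\dagger})'$ and $T \in {\cal L}(K_{\beta})$ are precisely those of case ii) in the definition of $S \rtensorh T$ (with $L=H$, $M=K$ and $\gamma,\delta$ there replaced by $\alpha,\beta$), so that $S \rtensorh T = S \tl T_{\beta} \in {\cal L}(\HfibreK)$. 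In the second statement the hypotheses $S \in {\cal L}(H_{\alpha})$ and $T \in \rho_{\beta}(\frakB)'$ match case i), so $S \rtensorh T = S_{\alpha} \tr T \in {\cal L}(\HfibreK)$.

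Next I would note that it suffices to prove the single inclusion $(S \rtensorh T)(\gamma \lt \beta) \subseteq \gamma \lt \beta$ (resp.\ for $\alpha \rt \delta$), and not separately the condition on the adjoint. Indeed, the classes $\rho_{\alpha}(\frakB^{\dagger})' \cap {\cal L}(H_{\gamma})$ and ${\cal L}(K_{\beta})$ are stable under taking adjoints, and by Proposition \ref{proposition:itp-operators} one has $(S \rtensorh T)^{*} = S^{*} \rtensorh T^{*}$; hence the inclusion applied to $S^{*}\rtensorh T^{*}$ yields the required invariance of $\gamma \lt \beta$ under $(S \rtensorh T)^{*}$.

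The computational core is an intertwining identity on kets. For $\eta \in \beta$ and $\zeta \in H$,
\begin{align*}
  (S \rtensorh T)|\eta\rangle_{\leg{2}}\zeta = (S \tl T_{\beta})(\zeta \tl \eta) = S\zeta \tl T\eta = |T\eta\rangle_{\leg{2}}S\zeta,
\end{align*}
so $(S \rtensorh T)|\eta\rangle_{\leg{2}} = |T\eta\rangle_{\leg{2}}S$. Then for $g \in \gamma$ we get $(S \rtensorh T)|\eta\rangle_{\leg{2}}g = |T\eta\rangle_{\leg{2}}Sg$; since $T\eta \in \beta$ (as $T \in {\cal L}(K_{\beta})$) and $Sg \in \gamma$ (as $S \in {\cal L}(H_{\gamma})$), this element lies in $\kbeta{2}\gamma$, whence $(S \rtensorh T)\lnspan \kbeta{2}\gamma\rnspan \subseteq \gamma \lt \beta$. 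The second statement is handled symmetrically through the identity $(S \rtensorh T)|\xi\rangle_{\leg{1}} = |S\xi\rangle_{\leg{1}}T$ for $\xi \in \alpha$, which gives $|\xi\rangle_{\leg{1}}d \mapsto |S\xi\rangle_{\leg{1}}Td \in \kalpha{1}\delta$; alternatively it follows from the first by conjugating with the flip $\Sigma$ of Proposition \ref{proposition:rtp-symmetric}, using $\Sigma_{*}(\alpha \rt \delta) = \delta \lt \alpha$ and the corresponding relation $\Sigma(S \rtensorh T) = (T \rtensorh S)\Sigma$.

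I expect no serious obstacle here: the argument is entirely formal once the kets are in place. The only points demanding care are the bookkeeping ones, namely confirming which case of the definition makes $S \rtensorh T$ well-defined, and observing that the hypotheses are closed under adjoints so that the conditions of the form $T^{*}\beta \subseteq \beta$ come for free. The intertwining identities themselves are immediate from the defining formulas for $|\eta\rangle_{\leg{2}}$, $|\xi\rangle_{\leg{1}}$ and for $S \tl T_{\beta}$, $S_{\alpha} \tr T$.
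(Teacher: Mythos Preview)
Your proposal is correct and follows essentially the same route as the paper: both reduce to the computation $(S \rtensorh T)|\eta\rangle_{\leg{2}}g = |T\eta\rangle_{\leg{2}}Sg \in \kbeta{2}\gamma$, with the second inclusion handled symmetrically. Your treatment is more explicit than the paper's (which compresses the whole argument into one line and says ``similarly'' for the adjoint), and your observation that closure under adjoints plus $(S \rtensorh T)^{*}=S^{*}\rtensorh T^{*}$ dispatches the adjoint condition is a clean way to make that ``similarly'' precise.
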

\begin{proof}
  We only prove the first inclusion; the second one follows similarly.
  For all $S \in \rho_{\alpha}(\frakB^{\dagger})' \cap {\cal
    L}(H_{\gamma})$ and $T \in {\cal L}(K_{\beta})$, we have $(S
  \rtensorh T)(\gamma \lt \beta) = \lnspan
  |T_{\beta}\beta\rnspan_{\leg{2}} S_{\gamma}\gamma\rnspan \subseteq
  \lnspan \kbeta{2}\gamma\rnspan=\gamma \lt \beta$ and similarly $(S
  \rtensorh T)^{*}(\gamma \lt \beta) \subseteq \gamma \lt \beta$.
\end{proof}

The relative tensor product is associative in the following sense:
\begin{proposition} \label{proposition:rtp-associative}
  Let $H,K,L$ be Hilbert spaces, $\cbasesb$, $\cbasesc$, $
  \cbases{D}{L}$ $C^{*}$-bases, and $\alpha \in \cfact(H;\cbasesb)$, $
  \beta \in \cfact(K;\cbaseosb)$, $ \gamma \in
  \cfact(K;\cbasesc)$, $ \delta \in \cfact(L;\cbaseos{C}{K})$
  such that $\beta \perp \gamma$. Then there exists an isomorphism
    \begin{align*}
      \Theta \colon (\HfibreK) \rtensor{\alpha \rt
        \gamma}{\frakK}{\delta} L &\to (\alpha\tr_{\rho_{\beta}} K)
      {_{\rho_{(\alpha \rt \gamma)}}\tl}
      \delta  \\
      &\to \alpha \tr_{\rho_{\beta}} K {_{\rho_{\gamma}}}\tl \delta \\
      &\to \alpha \tr_{\rho_{(\beta \lt \delta)}} (K
      {_{\rho_{\gamma}}\tl} \delta) \to H \rtensor{\alpha}{\frakH}{\beta
        \lt \delta} (K \rtensor{\gamma}{\frakK}{\delta} L),
    \end{align*}
and $\Theta_{*}$ is given by\usetagform{empty}
\begin{align*}
(\epsilon \lt \beta) \lt \delta &\mapsto \epsilon \lt (\beta
  \lt \delta) 
 \text{ for all } \epsilon \in
  \cfact(H_{\alpha};\cbases{D}{L}), \\ (\alpha \lt
  \epsilon) \lt \delta &\mapsto \alpha \rt (\epsilon \lt \delta) 
  \text{f or all } \epsilon \in \cfact(K;\cbases{D}{L}) \text{ s.t.\ }
  \beta\perp \epsilon\perp \gamma, \\ (\alpha
  \rt \gamma) \rt \epsilon &\mapsto \alpha \rt (\gamma \rt \epsilon) 
  \text{ for all } \epsilon \in \cfact(L_{\delta};\cbases{D}{L}).  \tag{\qed}
\end{align*}\usetagform{default}
\end{proposition}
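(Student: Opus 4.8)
The plan is to treat $\Theta$ as the composite of the four maps displayed in the statement, to show each is unitary, and then to verify the three formulas for $\Theta_{*}$. First I would observe that, writing $\HfibreK=\alpha\tr_{\rho_{\beta}}K$, the outer two arrows are nothing but the canonical identifications \eqref{eq:rtp-space}: the first realizes $(\HfibreK)\rtensor{\alpha\rt\gamma}{\frakK}{\delta}L$ as the flipped internal tensor product $(\alpha\tr_{\rho_{\beta}}K){_{\rho_{(\alpha\rt\gamma)}}\tl}\delta$, and the last realizes $\alpha\tr_{\rho_{(\beta\lt\delta)}}(K\rtensor{\gamma}{\frakK}{\delta}L)$ --- using also $K\rtensor{\gamma}{\frakK}{\delta}L\cong K{_{\rho_{\gamma}}\tl}\delta$ --- as the relative tensor product on the right. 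Both are unitary by construction, so the content is concentrated in the middle two arrows.

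The hard part is the second arrow $(\alpha\tr_{\rho_{\beta}}K){_{\rho_{(\alpha\rt\gamma)}}\tl}\delta\to\alpha\tr_{\rho_{\beta}}(K{_{\rho_{\gamma}}\tl}\delta)$, which I would obtain as an instance of the associativity of the internal tensor product of $C^{*}$-modules. Two points make this work. By Proposition \ref{proposition:rtp-factorization-maps} the representation $\rho_{(\alpha\rt\gamma)}$ equals $(\rho_{\gamma})_{\leg{2}}=\Id_{\alpha}\tr\rho_{\gamma}$, so it acts through the second leg only and the bracketing may be reshuffled. To form the target $\alpha\tr_{\rho_{\beta}}(K{_{\rho_{\gamma}}\tl}\delta)$ one needs each $\rho_{\beta}(b)$, $b\in\frakB$, to define an adjointable operator $\rho_{\beta}(b)\tl\Id_{\delta}$ on $K{_{\rho_{\gamma}}\tl}\delta$; by Proposition \ref{proposition:itp-operators} this holds precisely when $\rho_{\beta}(\frakB)$ commutes with $\rho_{\gamma}(\frakC^{\dagger})$, and that commutation is exactly what the hypothesis $\beta\perp\gamma$ yields through Remark \ref{remark:rtp-commute}. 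Concretely the arrow sends $(\xi\tr k)\tl d\mapsto\xi\tr(k\tl d)$ for $\xi\in\alpha$, $k\in K$, $d\in\delta$, and I would check that it preserves inner products by expanding both sides and commuting $\rho_{\beta}(\langle\xi|\xi'\rangle)$ past $\rho_{\gamma}(\langle d|d'\rangle)$; density of the elementary tensors then gives surjectivity. This single commutation is the only genuine obstacle; everything else is bookkeeping with the identifications \eqref{eq:rtp-iso} and \eqref{eq:rtp-space}.

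Next I would dispose of the third arrow, which is a relabeling: since $\beta\perp\gamma$ gives $\beta\in\cfact(K_{\gamma};\cbaseosb)$, Proposition \ref{proposition:rtp-factorization-maps} shows that $\beta\lt\delta$ is a $C^{*}$-factorization of $K\rtensor{\gamma}{\frakK}{\delta}L$ and that $\rho_{(\beta\lt\delta)}=(\rho_{\beta})_{\leg{1}}=\rho_{\beta}\tl\Id_{\delta}$. This is exactly the $\frakB$-action used to build the target of the second arrow, so the third arrow is the identity under this identification. Composing the four maps therefore produces a unitary $\Theta$ acting by $(\xi\tr k)\tl d\mapsto\xi\tr(k\tl d)$.

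Finally I would verify the three formulas for $\Theta_{*}$. For each I would evaluate $\Theta$ on the ket-operators generating the factorization in question, using the definitions $\gamma\lt\beta=\lnspan\kbeta{2}\gamma\rnspan$ and $\alpha\rt\delta=\lnspan\kalpha{1}\delta\rnspan$, the relations $\rho_{(\gamma\lt\beta)}=(\rho_{\gamma})_{\leg{1}}$ and $\rho_{(\alpha\rt\delta)}=(\rho_{\delta})_{\leg{2}}$ from Proposition \ref{proposition:rtp-factorization-maps}, and the explicit action of $\Theta$ on elementary tensors just obtained. For a factorization $\epsilon$ of $H$ the computation carries the generators of $(\epsilon\lt\beta)\lt\delta$ to those of $\epsilon\lt(\beta\lt\delta)$; the case of a factorization $\epsilon$ of $K$ with $\beta\perp\epsilon\perp\gamma$ and the case of a factorization $\epsilon$ of $L$ are handled the same way. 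Well-definedness of all these factorizations, and the compatibilities needed to form them, come from the compatibility-preservation assertions of Proposition \ref{proposition:rtp-factorization-maps} applied to the relations $\beta\perp\gamma$ and $\beta\perp\epsilon\perp\gamma$.
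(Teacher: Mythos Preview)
Your proposal is correct and supplies exactly the details the paper omits: in the paper the proposition is stated with a \qed\ and no proof, the chain of isomorphisms in the statement being intended as the proof itself. Your identification of the outer arrows with \eqref{eq:rtp-space}, of the middle arrows with associativity of the internal tensor product (enabled by $\rho_{(\alpha\rt\gamma)}=(\rho_{\gamma})_{\leg{2}}$, $\rho_{(\beta\lt\delta)}=(\rho_{\beta})_{\leg{1}}$, and the commutation of $\rho_{\beta}(\frakB)$ with $\rho_{\gamma}(\frakC^{\dagger})$ from $\beta\perp\gamma$), and your verification of the $\Theta_{*}$ formulas via the ket-generators, are precisely what the author leaves implicit.
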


\subsection{Relation to the relative tensor product}
\label{subsection:rtp-comparison}

Let $H,K,\cbasesb,\alpha,\beta$ be as in \eqref{eq:rtp-data}.  If
$\cbasesb$ arises from a proper KMS-weight $\mu$ on a $C^{*}$-algebra
$B$ as in Example \ref{example:rtp-base-weight}, then $\HfibreK$ can be
identified with a von Neumann-algebraic relative tensor product as follows.

We use the same notation as in Example \ref{example:rtp-base-weight}.  The
proper KMS-weight $\mu$ extends to a normal semifinite faithful weight
$\tilde\mu$ on the von Neumann algebra $N:=\frakB''$, and the
maps $\Lambda_{\mu}$ and $\pi_{\mu}$ extend uniquely to maps
$\Lambda_{\tilde \mu}$ and $\pi_{\tilde \mu}$ such that
$(H_{\mu},\Lambda_{\tilde \mu},\pi_{\tilde \mu})$ becomes a
GNS-construction for $\tilde\mu$. As in the case of
$C^{*}$-algebras, we obtain from this GNS-construction and the modular
conjugation $J_{\mu}=J_{\tilde \mu}$ a GNS-construction
$(H_{\mu},\Lambda_{\tilde \mu^{op}},\pi_{\tilde \mu^{op}})$ for the
opposite weight $\tilde \mu^{op}$ on $N^{op}$.

Since $\pi_{\tilde\mu^{op}}(N^{op})$ commutes with
$\frakB=\pi_{\mu}(B)$ and $\pi_{\tilde\mu}(N)$ commutes with
$\frakB^{\dagger}=\pi_{\mu^{op}}(B^{op})$, respectively, we can extend
$\rho_{\alpha}$ and $\rho_{\beta}$ to representations
\begin{align*}
  \tilde \rho_{\alpha} \colon N^{op} &\to {\cal L}(\alpha \tr \frakH)
  \cong {\cal L}(H), & \tilde \rho_{\beta} \colon N &\to {\cal
    L}(\beta \tr \frakH)
  \cong {\cal L}(K), \\
  y &\mapsto \Id_{\alpha} \tr \pi_{\tilde\mu^{op}}(y^{op}), & x
  &\mapsto \Id_{\beta} \tr \pi_{\tilde\mu}(x).
\end{align*}
\begin{lemma}
  The representations $\tilde \rho_{\alpha}$ and $\tilde \rho_{\beta}$
  are faithful, nondegenerate, and normal.
\end{lemma}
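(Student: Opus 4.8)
The plan is to prove the three properties for $\tilde\rho_\beta$; the statement for $\tilde\rho_\alpha$ then follows by passing to the opposite $C^{*}$-base and the opposite weight $\mu^{op}$ (Example~\ref{example:rtp-base-weight}), which interchanges $\frakB\leftrightarrow\frakB^{\dagger}$, $N\leftrightarrow N^{op}$ and $\alpha\leftrightarrow\beta$. Recall that $\pi_{\tilde\mu}\colon N\hookrightarrow{\cal L}(\frakH)$ is the identity inclusion, so that $\tilde\rho_\beta(x)=\Id_\beta\tr x$. First I would record that $\tilde\rho_\beta$ is a $*$-homomorphism: since $N$ commutes with $\frakB^{\dagger}=\lnspan\beta^{*}\beta\rnspan$, each $x\in N$ intertwines the representation $\frakB^{\dagger}\hookrightarrow{\cal L}(\frakH)$ used to form $\beta\tr\frakH$, so Proposition~\ref{proposition:itp-operators} produces $\tilde\rho_\beta(x)=\Id_\beta\tr x\in{\cal L}(\beta\tr\frakH)\cong{\cal L}(K)$ with adjoint $\Id_\beta\tr x^{*}$, and functoriality of the internal tensor product gives multiplicativity. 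Nondegeneracy is then immediate, because $\tilde\rho_\beta(b)=\rho_\beta(b)$ for $b\in\frakB\subseteq N$, so $\tilde\rho_\beta$ extends the nondegenerate representation $\rho_\beta$ and $\lnspan\tilde\rho_\beta(N)K\rnspan\supseteq\lnspan\rho_\beta(\frakB)K\rnspan=K$.

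For normality I would test on matrix coefficients against the total family of elementary tensors. Under the identification $\beta\tr\frakH\cong K$ the vectors $\eta\tr\zeta$ with $\eta\in\beta$, $\zeta\in\frakH$ span a dense subspace, since $\lnspan\beta\frakH\rnspan=K$, and for these
\begin{align*}
  x\mapsto\langle\eta\tr\zeta\,|\,\tilde\rho_\beta(x)(\eta'\tr\zeta')\rangle=\langle\zeta\,|\,(\eta^{*}\eta')x\zeta'\rangle=\langle(\eta^{*}\eta')^{*}\zeta\,|\,x\zeta'\rangle
\end{align*}
is a vector functional on $N\subseteq{\cal L}(\frakH)$, hence normal. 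As $\tilde\rho_\beta$ is bounded and these vectors are total, the set of pairs $(\psi,\psi')$ for which $x\mapsto\langle\psi|\tilde\rho_\beta(x)\psi'\rangle$ is normal is norm-closed in each variable and therefore equals $K\times K$; since every functional in ${\cal L}(K)_{*}$ is a norm-limit of finite sums of vector functionals, $\omega\circ\tilde\rho_\beta$ is normal for all $\omega\in{\cal L}(K)_{*}$, which is exactly normality of $\tilde\rho_\beta$.

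For faithfulness I would argue directly. Suppose $\tilde\rho_\beta(x)=0$. Then the matrix coefficient above vanishes for all $\eta,\eta'\in\beta$ and $\zeta,\zeta'\in\frakH$; letting $\zeta$ range over $\frakH$ gives $(\eta^{*}\eta')x\zeta'=0$, and since $\lnspan\beta^{*}\beta\rnspan=\frakB^{\dagger}$ this says $\frakB^{\dagger}x=0$. Because $\frakB^{\dagger}$ acts nondegenerately on $\frakH$, its operators have trivial common kernel, so $x\zeta'=0$ for every $\zeta'$, i.e.\ $x=0$.

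The step that needs care is faithfulness. It is tempting to deduce it from the fact that $\rho_\beta=\tilde\rho_\beta|_\frakB$ is already faithful on the $\sigma$-weakly dense $C^{*}$-subalgebra $\frakB\subseteq N$, but this is \emph{not} sufficient in general: a normal representation may annihilate a nonzero central summand of $N$ while remaining injective on a dense subalgebra that happens to meet that summand trivially. The correct input is instead the nondegeneracy of the \emph{complementary} algebra $\frakB^{\dagger}$ on $\frakH$, which is built into the $C^{*}$-base axioms and is precisely what the direct computation exploits.
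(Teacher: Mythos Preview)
Your proof is correct and follows essentially the same approach as the paper: the normality argument via vector functionals on elementary tensors is identical, and your faithfulness and nondegeneracy arguments spell out in detail what the paper compresses into the single phrase ``faithful and nondegenerate because $\frakB$ is nondegenerate'' (with $\frakB$ and $\frakB^{\dagger}$ swapped since you treat $\tilde\rho_{\beta}$ rather than $\tilde\rho_{\alpha}$). Your closing remark about why faithfulness on the dense subalgebra $\frakB$ alone would not suffice is a nice addition but does not represent a different route.
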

\begin{proof}
  We only prove the assertions concerning $\tilde \rho_{\alpha}$. This
  representation is faithful and nondegenerate because $\frakB$ is
  nondegenerate. Let us show that $\tilde \rho_{\alpha}$ is
  normal. Every normal linear functional on ${\cal L}(H)$ can be
  approximated in norm by functionals of the form $\omega = \langle
  \xi\eta|\frei \xi'\eta'\rangle$, where $\xi,\xi' \in \alpha$ and
  $\zeta,\zeta' \in \frakH$. Therefore, it suffices to show that for
  each such $\omega$, the composition $\omega \circ \tilde
  \rho_{\alpha}$ is normal. But this holds because $(\omega
  \circ \tilde \rho_{\alpha})(x) = \langle \xi \eta| \tilde
  \rho_{\alpha}(x)\xi' \eta'\rangle = \langle \xi \eta|\xi' x
  \eta'\rangle = \langle \xi'^{*}\xi \eta|x\eta'\rangle$
  for all $x \in N$.
\end{proof}
The definition of the von-Neumann-algebraic relative tensor product
involves the subspace
\begin{align*}
  D(H_{\tilde \rho_{\alpha}};\tilde\mu^{op}) := \big\{ \zeta \in H
  \,\big|\, \exists C > 0 \forall y \in \mathfrak{N}_{\tilde \mu^{op}}
  : \|\tilde\rho_{\alpha}(y) \zeta\| \leq C \|\Lambda_{\tilde
    \mu^{op}}(y)\| \big\}.
\end{align*}
Evidently, an element $\zeta \in H$ belongs to $D(H_{\tilde
  \rho_{\alpha}};\tilde\mu^{op})$ if and only if the map
$\Lambda_{\tilde\mu^{op}}(\mathfrak{N}_{\tilde\mu^{op}}) \to H$ given
by $\Lambda_{\tilde\mu^{op}}(y) \mapsto \rho_{\tilde\alpha}(y)\zeta$
extends to a bounded linear map $L(\zeta)\colon H_{\mu} \to H$.
\begin{lemma}
  $\alpha \Lambda_{\tilde \mu}(\mathfrak{N}_{\tilde
    \mu}) \subseteq D(H_{\tilde \rho_{\alpha}};\tilde\mu^{op})$ and
  $L\big(\xi \Lambda_{\tilde \mu}(x)\big) = \xi
  \pi_{\mu}(x)$ for all $\xi \in \alpha$ and $x \in \mathfrak{N}_{\tilde\mu}$.
\end{lemma}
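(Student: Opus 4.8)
The plan is to fix $\xi\in\alpha$ and $x\in\mathfrak N_{\tilde\mu}$, to set $\zeta:=\xi\Lambda_{\tilde\mu}(x)$, and to reduce both assertions to a single vector identity in $\frakH$. Under the identification $H\cong\alpha\tr\frakH$ from \eqref{eq:rtp-iso} one has $\zeta\equiv\xi\tr\Lambda_{\tilde\mu}(x)$, and since $\tilde\rho_\alpha(w)=\Id_\alpha\tr\pi_{\tilde\mu^{op}}(w)$ acts by $\xi'\tr\eta\mapsto\xi'\tr\pi_{\tilde\mu^{op}}(w)\eta$, we obtain for every $w\in\mathfrak N_{\tilde\mu^{op}}$
\begin{equation*}
  \tilde\rho_\alpha(w)\zeta
  =\xi\tr\pi_{\tilde\mu^{op}}(w)\Lambda_{\tilde\mu}(x)
  =\xi\big(\pi_{\tilde\mu^{op}}(w)\Lambda_{\tilde\mu}(x)\big).
\end{equation*}
Consequently, everything reduces to establishing
\begin{equation*}
  \pi_{\tilde\mu^{op}}(w)\Lambda_{\tilde\mu}(x)=\pi_{\tilde\mu}(x)\Lambda_{\tilde\mu^{op}}(w)
  \qquad\text{for all }w\in\mathfrak N_{\tilde\mu^{op}}.\tag{$\ast$}
\end{equation*}
Indeed, granting $(\ast)$ we get $\tilde\rho_\alpha(w)\zeta=\xi\pi_{\tilde\mu}(x)\Lambda_{\tilde\mu^{op}}(w)$, so the map $\Lambda_{\tilde\mu^{op}}(w)\mapsto\tilde\rho_\alpha(w)\zeta$ is the restriction of the bounded operator $\xi\pi_{\tilde\mu}(x)\in{\cal L}(\frakH,H)$ to the dense subspace $\Lambda_{\tilde\mu^{op}}(\mathfrak N_{\tilde\mu^{op}})\subseteq H_\mu$. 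By the characterization of $D(H_{\tilde\rho_\alpha};\tilde\mu^{op})$ recorded above, this shows at once that $\zeta\in D(H_{\tilde\rho_\alpha};\tilde\mu^{op})$ and that $L(\zeta)=\xi\pi_{\tilde\mu}(x)$; since $\xi$ and $x$ are arbitrary, both claims of the lemma follow.

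It remains to prove $(\ast)$. Writing $w=y^{op}$ with $a:=y^{*}\in\mathfrak N_{\tilde\mu}$ and inserting the formulas of Example \ref{example:rtp-base-weight} applied to $\tilde\mu$, namely $\pi_{\tilde\mu^{op}}(w)=J_\mu a J_\mu$ and $\Lambda_{\tilde\mu^{op}}(w)=J_\mu\Lambda_{\tilde\mu}(a)$, the identity $(\ast)$ takes the form
\begin{equation*}
  J_\mu a J_\mu\,\Lambda_{\tilde\mu}(x)=\pi_{\tilde\mu}(x)\,J_\mu\Lambda_{\tilde\mu}(a)
  \qquad(x,a\in\mathfrak N_{\tilde\mu}).
\end{equation*}
This is the standard Tomita--Takesaki commutation relation for the weight $\tilde\mu$: it expresses that $J_\mu\Lambda_{\tilde\mu}(a)$ is a right--bounded vector of the left Hilbert algebra associated with $\tilde\mu$, whose right--bound operator is $J_\mu a J_\mu\in N'=J_\mu N J_\mu$; equivalently, it is the relation $\pi_{\tilde\mu}(x)\,\eta=\pi_{r}(\eta)\,\Lambda_{\tilde\mu}(x)$ for the right--bounded vector $\eta=J_\mu\Lambda_{\tilde\mu}(a)$.

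I would verify this first on the Tomita algebra $\mathfrak T_{\tilde\mu}$ of analytic elements: for $a,x\in\mathfrak T_{\tilde\mu}$ one has $J_\mu\Lambda_{\tilde\mu}(c)=\Lambda_{\tilde\mu}(\sigma_{i/2}(c)^{*})$ and $\pi_{\tilde\mu}(c)\Lambda_{\tilde\mu}(d)=\Lambda_{\tilde\mu}(cd)$, and a short computation with the modular automorphism group shows that both sides equal $\Lambda_{\tilde\mu}\big(x\,\sigma_{i/2}(a)^{*}\big)$. To pass to arbitrary $x,a\in\mathfrak N_{\tilde\mu}$, I would approximate each by elements of $\mathfrak T_{\tilde\mu}$ converging simultaneously in the graph norm of $\Lambda_{\tilde\mu}$ and $\sigma$-strongly$^{*}$, using that $\mathfrak T_{\tilde\mu}$ is a core for $\Lambda_{\tilde\mu}$; the boundedness of $J_\mu a J_\mu$ and of $\pi_{\tilde\mu}(x)$ then renders both sides continuous under these limits, so $(\ast)$ extends to all of $\mathfrak N_{\tilde\mu}$. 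The only genuinely delicate point is this modular identity together with its limiting argument, where graph-norm and $\sigma$-strong$^{*}$ convergence must be combined to control the two sides at once; the remaining steps are routine manipulations with the internal tensor product and with the op-conventions of Example \ref{example:rtp-base-weight}.
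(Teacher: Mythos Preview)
Your proposal is correct and follows essentially the same route as the paper: both reduce the claim to the Tomita--Takesaki commutation identity $\pi_{\tilde\mu^{op}}(w)\Lambda_{\tilde\mu}(x)=\pi_{\tilde\mu}(x)\Lambda_{\tilde\mu^{op}}(w)$ and then read off boundedness of $\Lambda_{\tilde\mu^{op}}(w)\mapsto\tilde\rho_\alpha(w)\zeta$ via the operator $\xi\pi_{\tilde\mu}(x)$. The only difference is that the paper simply invokes this identity as a standard fact from Tomita--Takesaki theory, whereas you supply a verification on the Tomita algebra together with a graph-norm/$\sigma$-strong$^{*}$ approximation argument; this extra detail is fine but not required.
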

\begin{proof}
  By Tomita-Takesaki theory, $\pi_{\mu}(x) \Lambda_{\tilde\mu^{op}}(y) =
  \pi_{\tilde\mu^{op}}(y) \Lambda_{\tilde\mu}(x)$ for all $x \in
  \mathfrak{N}_{\tilde \mu}$ and $y \in \mathfrak{N}_{\tilde
    \mu^{op}}$. Consequently, 
  \begin{align*}
    \xi \pi_{\mu}(x) \Lambda_{\tilde\mu^{op}}(y) = \xi
    \pi_{\tilde\mu^{op}}(y)\Lambda_{\tilde\mu}(x) =
    \rho_{\tilde\alpha}(y) \xi \Lambda_{\tilde\mu}(x) \quad \text{for
      all } \xi \in \alpha,\, x\in \mathfrak{N}_{\tilde\mu}.
  \end{align*}
  The claims follow.
\end{proof}

Recall that the relative tensor product $\vHfibreK$ of $H$ and $K$
with respect to the representations $\tilde \rho_{\alpha}$,
$\tilde\rho_{\beta}$ and the weight $\tilde\mu$ is the Hilbert space
obtained from the algebraic tensor product $D(H_{\tilde
  \rho_{\alpha}};\tilde\mu^{op}) \odot K$ and the sesquilinear form
given by 
\begin{align*}
  \langle \zeta \odot \omega|\zeta' \odot \omega'\rangle:= \big\langle
  \omega\big|
  \tilde\rho_{\beta}\big(L(\zeta)^{*}L(\zeta)\big)\omega'\big\rangle
\end{align*}
for all $\zeta,\zeta' \in D(H_{\tilde\rho_{\alpha}};\tilde\mu^{op})$
and $\omega,\omega' \in K$.  For all $\zeta \in
D(H_{\tilde\rho_{\alpha}};\tilde\mu^{op})$ and $\omega \in K$, we
denote the image of $\zeta \odot \omega$ in $\vHfibreK$ by $\zeta
\bar{\otimes} \omega$.
\begin{proposition}\label{proposition:rtp-comparison}
  There exists a unique unitary
  \begin{align*}
    \Phi\colon \vHfibreK &\to H
    {_{\rho_{\alpha}}\tl} \beta\cong \HfibreK  \cong  \alpha
    \tr_{\rho_{\beta}} K
  \end{align*}
  such that
  \begin{align} \label{eq:rtp-iso-1} 
    \Phi(\theta \bar{\otimes} \eta\zeta)&\equiv
    L(\theta)\zeta \tl \eta && \text{for all }\theta \in
    D(H_{\tilde\rho_{\alpha}};\tilde\mu^{op}), \, \eta\in \beta, \,
    \zeta\in \frakH, \\
    \label{eq:rtp-iso-2}
    \Phi\big(\xi\Lambda_{\mu}(x) \bar{\otimes} \omega\big)&\equiv
    \xi\pi_{\mu}(x) \tr \omega && \text{for all } \xi \in \alpha, \,
    x\in \mathfrak{N}_{\mu}, \, \omega \in K.
  \end{align}
\end{proposition}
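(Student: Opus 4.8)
The plan is to build the unitary from equation~\eqref{eq:rtp-iso-1}, prove it is isometric by comparing the two inner products term by term, check that its range is dense, and finally read off~\eqref{eq:rtp-iso-2} together with uniqueness. The construction via~\eqref{eq:rtp-iso-1} is convenient because the elements $\theta \bar{\otimes} \omega$ with $\theta \in D(H_{\tilde\rho_{\alpha}};\tilde\mu^{op})$ span a dense subspace of $\vHfibreK$ essentially by definition.

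First I would record the intertwining properties of the operators $L(\theta)$. Starting from $L(\theta)\Lambda_{\tilde\mu^{op}}(y) = \tilde\rho_{\alpha}(y)\theta$ for $\theta \in D(H_{\tilde\rho_{\alpha}};\tilde\mu^{op})$ and $y \in \mathfrak{N}_{\tilde\mu^{op}}$, together with the density of $\Lambda_{\tilde\mu^{op}}(\mathfrak{N}_{\tilde\mu^{op}})$ in $\frakH = H_{\mu}$ and the fact that $\tilde\rho_{\alpha}$ is a representation, one obtains $L(\theta)\pi_{\tilde\mu^{op}}(y) = \tilde\rho_{\alpha}(y)L(\theta)$ for all $y \in N^{op}$. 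Restricting to $\frakB^{\dagger} \subseteq \pi_{\tilde\mu^{op}}(N^{op})$, where $\tilde\rho_{\alpha}$ restricts to $\rho_{\alpha}$, this yields $L(\theta) b^{\dagger} = \rho_{\alpha}(b^{\dagger}) L(\theta)$ for all $b^{\dagger} \in \frakB^{\dagger}$. Passing to adjoints and products shows that $L(\theta)^{*}L(\theta')$ commutes with $\pi_{\tilde\mu^{op}}(N^{op})$, so by the Tomita--Takesaki commutation theorem $L(\theta)^{*}L(\theta') \in \pi_{\tilde\mu^{op}}(N^{op})' = \pi_{\tilde\mu}(N) = N$. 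I also note that $\pi_{\tilde\mu}$ is the identity embedding of $N = \frakB''$ into ${\cal L}(\frakH)$ and that $N$ commutes with $\frakB^{\dagger}$, since $\frakB^{\dagger} \subseteq \frakB'$ forces $\frakB'' \subseteq (\frakB^{\dagger})'$.

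With these facts the isometry is a direct computation. For $\theta,\theta' \in D(H_{\tilde\rho_{\alpha}};\tilde\mu^{op})$, $\eta,\eta' \in \beta$ and $\zeta,\zeta' \in \frakH$, the inner product of $L(\theta)\zeta \tl \eta$ and $L(\theta')\zeta' \tl \eta'$ in $H {_{\rho_{\alpha}}\tl} \beta$ equals $\langle L(\theta)\zeta | \rho_{\alpha}(\eta^{*}\eta') L(\theta')\zeta'\rangle = \langle \zeta | L(\theta)^{*}L(\theta')\, \eta^{*}\eta'\, \zeta'\rangle$, using $\rho_{\alpha}(\eta^{*}\eta')L(\theta') = L(\theta')\eta^{*}\eta'$. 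On the other hand, since $\tilde\rho_{\beta}(x)\eta'\zeta' = \eta' x \zeta'$ for $x \in N$ (because $\pi_{\tilde\mu}$ is the identity), the inner product of $\theta \bar{\otimes} \eta\zeta$ and $\theta' \bar{\otimes} \eta'\zeta'$ in $\vHfibreK$ equals $\langle \eta\zeta | \eta'\, L(\theta)^{*}L(\theta')\, \zeta'\rangle = \langle \zeta | \eta^{*}\eta'\, L(\theta)^{*}L(\theta')\, \zeta'\rangle$. The two expressions coincide because $L(\theta)^{*}L(\theta') \in N$ commutes with $\eta^{*}\eta' \in \frakB^{\dagger}$. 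Hence $\theta \bar{\otimes} \eta\zeta \mapsto L(\theta)\zeta \tl \eta$ preserves inner products; as $K = [\beta\frakH]$ these elements are total in $\vHfibreK$, so the map extends to an isometry $\Phi$ satisfying~\eqref{eq:rtp-iso-1}, and uniqueness is immediate since~\eqref{eq:rtp-iso-1} pins down $\Phi$ on a dense set.

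It remains to check surjectivity and~\eqref{eq:rtp-iso-2}. The range of $\Phi$ contains every $L(\theta)\zeta \tl \eta$; taking $\theta = \xi\Lmu(x)$ and invoking the preceding lemma $L(\xi\Lmu(x)) = \xi\pi_{\mu}(x)$, the vectors $\xi\pi_{\mu}(x)\zeta$ with $\xi \in \alpha$, $x \in \Nmu$, $\zeta \in \frakH$ are dense in $H = [\alpha\frakH]$, whence the range is dense and $\Phi$ is unitary. Finally, for $\theta = \xi\Lmu(x)$ the same lemma and the identification $\xi\pi_{\mu}(x)\zeta \tl \eta \equiv \xi\pi_{\mu}(x) \tr \eta\zeta$ from~\eqref{eq:rtp-space} give $\Phi(\xi\Lmu(x) \bar{\otimes} \eta\zeta) \equiv \xi\pi_{\mu}(x) \tr \eta\zeta$; letting $\eta\zeta$ run through a dense subset of $K$ and using continuity in the second variable yields~\eqref{eq:rtp-iso-2}. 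I expect the main obstacle to be the structural input of the second paragraph --- the intertwining of $L(\theta)$ with $\frakB^{\dagger}$ and the membership $L(\theta)^{*}L(\theta') \in N$ --- which rests on Tomita--Takesaki theory for the extended weight $\tilde\mu$ and on the compatibility of the $C^{*}$- and von Neumann-algebraic pictures.
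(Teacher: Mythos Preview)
Your proof is correct and follows essentially the same route as the paper: establish the isometry by comparing the two inner products on elementary tensors $\theta\bar\otimes\eta\zeta$ and $L(\theta)\zeta\tl\eta$, obtain surjectivity from the density of $L(D(H_{\tilde\rho_{\alpha}};\tilde\mu^{op}))\frakH$ in $H$ (via the preceding lemma), and then read off~\eqref{eq:rtp-iso-2}. The only difference is that you spell out explicitly the intertwining $L(\theta)b^{\dagger}=\rho_{\alpha}(b^{\dagger})L(\theta)$ and the membership $L(\theta)^{*}L(\theta')\in N$, which the paper uses tacitly when it writes $\tilde\rho_{\beta}\big(L(\theta)^{*}L(\theta')\big)$ and equates the two inner products in a single line.
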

\begin{proof}
  Uniqueness follows from the relation $\lnspan \beta \frakH \rnspan =
  K$ and \eqref{eq:rtp-iso-1}.  Existence of a unitary $\Phi$
  satisfying \eqref{eq:rtp-iso-1} follows from the relation
  \begin{align*}
    \big\langle \theta \bar{\otimes}\eta\zeta\big|\theta'
    \bar{\otimes} \eta'\zeta' \big\rangle_{(\vHfibreK)} &=
    \big\langle \eta\zeta \big|
    \tilde\rho_{\beta}\big(L(\theta)^{*}L(\theta)\big)\eta'\zeta'
    \big\rangle_{K} \\ &= \big\langle L(\theta)\zeta \tl \eta\big|
    L(\theta')\zeta' \tl \eta'\big\rangle_{(H {_{\rho_{\alpha}}\tl}
      \beta)},
  \end{align*}  
  valid for all $\theta,\theta'\in
  D(H_{\tilde\rho_{\alpha}};\mu^{op})$, $\eta,\eta'\in \beta$,
  $\zeta,\zeta' \in H_{\mu}$, and from the relation $\lnspan
  L(D(H_{\tilde\rho_{\alpha}};\tilde\mu^{op}))\frakH\rnspan = H$.
  Formula \eqref{eq:rtp-iso-2} follows from the calculation
  \begin{gather*}
    \Phi\big(\xi\Lambda_{\mu}(x) \bar{\otimes} \eta\zeta\big) \equiv
    L(\xi\Lambda_{\mu}(x))\zeta \tl \eta = \xi\pi_{\mu}(x)\zeta \tl
    \eta \equiv \xi\pi_{\mu}(x) \tr \eta\zeta. \qedhere
  \end{gather*}
\end{proof}

\section{The spatial fiber product of $C^{*}$-algebras}

In this section, we introduce a spatial fiber product of
$C^{*}$-algebras using the $C^{*}$-relative tensor product discussed
above. More precisely, assume that $H,K,\cbasesb,\alpha,\beta$ are as
in \eqref{eq:rtp-data}, so that we can form the $C^{*}$-relative
tensor product $\HfibreK$. Moreover, assume that $A \subseteq {\cal
  L}(H)$ and $B \subseteq {\cal L}(K)$ are $C^{*}$-subalgebras
satisfying $\lnspan A \rho_{\alpha}(\frakB^{\dagger})\rnspan \subseteq
A$ and $\lnspan B \rho_{\beta}(\frakB)\rnspan \subseteq B$. Then the
spatial fiber product of $A$ and $B$ relative to
$\alpha,\beta,\cbasesb$ will be a $C^{*}$-algebra
\begin{align*}
  \AfibreB :=\Ind_{\kalpha{1}}(B) \cap \Ind_{\kbeta{2}}(A) \subseteq
  {\cal L}(\HfibreK),
\end{align*}
where $\Ind_{\kalpha{1}}(B)$ and $\Ind_{\kbeta{2}}(A)$ are obtained by
``inducing up'' $B$ and $A$, respectively, in a sense explained in
Subsection \ref{subsection:fp-induction}.

Like the $C^{*}$-relative tensor product, the fiber product of
$C^{*}$-algebras is an analogue of a classical construction in the
setting of von Neumann algebras.  

Our spatial fiber product of $C^{*}$-algebras lacks several desirable
properties, e.g., associativity, and raises several natural
questions that we can not answer yet.  Nevertheless, it
serves our main purpose --- to describe the target of the
comultiplication of a (concrete) Hopf $C^{*}$-bimodule, in particular
for the legs of a regular $C^{*}$-pseudo-multiplicative unitary. Most
importantly, the construction is functorial with respect to a natural
class of morphisms and ``sufficiently associative'' so that we can
formulate a coassociativity condition for the comultiplication of a
(concrete) Hopf $C^{*}$-bimodule.

\subsection{Induction of $C^{*}$-algebras via $C^{*}$-modules}
\label{subsection:fp-induction}

Let $H$ and $K$ be Hilbert spaces, $\Gamma \subseteq {\cal L}(H,K)$ a
concrete $C^{*}$-module satisfying $\lnspan \Gamma H\rnspan = K$, and
put $\frakB:=\lnspan \Gamma^{*}\Gamma \rnspan \subseteq {\cal L}(H)$
and $\frakC:=\lnspan \Gamma \Gamma^{*}\rnspan \subseteq {\cal L}(K)$.

Let $A \subseteq {\cal L}(H)$ be a nondegenerate $C^{*}$-algebra
satisfying $[A\frakB]\subseteq A$. Put
\begin{align*}
  \indgA &:=\big\{ T \in {\cal L}(K) \,\big|\, T\Gamma, T^{*}\Gamma
  \subseteq [\Gamma A]\big\} \subseteq {\cal L}(K).
\end{align*}
Evidently, $\indgA$ is a $C^{*}$-subalgebra of ${\cal L}(K)$.
Equivalently, this $C^{*}$-algebra can be described as follows.  Since
$\lnspan \frakB A\rnspan \subseteq A$, we can form the
internal tensor product $\Gamma \tr A$.  For each $\gamma \in \Gamma$,
 define $|\gamma\rangle_{\leg{1}} \in {\cal L}_{A}(A, \Gamma \tr A)$
by $a \mapsto \gamma \tr a$, and put $|\Gamma\rangle_{\leg{1}}:=\{
|\gamma\rangle_{\leg{1}} | \gamma \in \Gamma\} \subseteq {\cal
  L}_{A}(A,\Gamma \tr A)$. Finally, let
\begin{align*}
  \indgA' &:= \big\{ T \in {\cal L}_{A}(\Gamma \tr A) \,\big|\,
  T|\Gamma\rangle_{\leg{1}},
  T^{*}|\Gamma\rangle_{\leg{1}} \subseteq [|\Gamma\rangle_{\leg{1}}
  A]\big\}.
\end{align*}
Since $\lnspan \Gamma A H\rnspan = K$, we have an isomorphism $\Gamma
\tr A \tr H \to K$ given by $\gamma \tr a \tr \zeta \mapsto \gamma
a\zeta$.  Using this isomorphism, we define an embedding
\begin{align*}
\iota\colon  {\cal L}_{A}(\Gamma \tr A) \to   {\cal L}(\Gamma
  \tr A \tr H) \cong {\cal L}(K), \ T \mapsto T \tr \Id_{H} \equiv \iota(T).
\end{align*}
\begin{lemma} \label{lemma:ind-iso}
  $\iota$ restricts to a  $*$-isomorphism $\indgA' \to \indgA$.
\end{lemma}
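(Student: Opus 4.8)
The plan is to read off what $\iota$ does on the generating operators $|\gamma\rangle_{\leg{1}}$, and then to prove well-definedness together with injectivity, and surjectivity, as separate steps. First I would record the identifications that make this transparent. Since $A$ is a $C^{*}$-algebra it is a concrete $C^{*}$-module over itself, and since $\frakB=[\Gamma^{*}\Gamma]$ is self-adjoint, the hypothesis $[A\frakB]\subseteq A$ gives $[\frakB A]\subseteq A$ by taking adjoints. Hence Lemma \ref{lemma:itp-concrete}, applied with $\Gamma$ in the role of ``$\Gamma$'' and $A$ in the role of ``$\Delta$'', shows that $\Lambda:=[\Gamma A]\subseteq{\cal L}(H,K)$ is a concrete $C^{*}$-module and provides a right-$A$-module isomorphism $\Phi\colon \Gamma\tr A\mycong\Lambda$, $\gamma\tr a\mapsto\gamma a$. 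Moreover $[\Lambda H]=[\Gamma A H]=[\Gamma H]=K$, so $\Lambda\tr_{A}H\cong K$ via $\lambda\tr\zeta\mapsto\lambda\zeta$, and $\Phi\tr\Id_{H}$ identifies this with the isomorphism $\Gamma\tr A\tr H\cong K$ used to define $\iota$. Through these identifications the operator $|\gamma\rangle_{\leg{1}}\tr\Id_{H}$ becomes left multiplication by $\gamma\in\Lambda\subseteq{\cal L}(H,K)$, and $(|\gamma'\rangle_{\leg{1}}a)\tr\Id_{H}$ becomes left multiplication by $\gamma' a\in[\Gamma A]$.

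By Proposition \ref{proposition:itp-operators} the assignment $T\mapsto T\tr\Id_{H}$ is multiplicative and $*$-preserving, so $\iota$ is a $*$-homomorphism, and it is injective because it is an embedding. Thus it remains to show $\iota(\indgA')=\indgA$. For well-definedness, let $T\in\indgA'$; since the defining conditions of $\indgA'$ on $T$ and on $T^{*}$ coincide, $\indgA'$ is $*$-closed, so it suffices to prove $\iota(T)\Gamma\subseteq[\Gamma A]$. Using $(T\tr\Id_{H})(|\gamma\rangle_{\leg{1}}\tr\Id_{H})=(T|\gamma\rangle_{\leg{1}})\tr\Id_{H}$ and the identifications above, $\iota(T)\gamma$ is exactly $(T|\gamma\rangle_{\leg{1}})\tr\Id_{H}$ read in ${\cal L}(H,K)$; as $T|\gamma\rangle_{\leg{1}}\in[|\Gamma\rangle_{\leg{1}}A]$ and each $(|\gamma'\rangle_{\leg{1}}a)\tr\Id_{H}$ is left multiplication by $\gamma' a$, we obtain $\iota(T)\gamma\in[\Gamma A]$. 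Applying this to $T$ and to $T^{*}$ yields $\iota(T)\in\indgA$.

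For surjectivity I would start from $S\in\indgA$, so that $S\Gamma,S^{*}\Gamma\subseteq\Lambda$. Then $S\Lambda=[S\Gamma A]\subseteq[\Lambda A]\subseteq[\Gamma A]=\Lambda$ and likewise $S^{*}\Lambda\subseteq\Lambda$, so left multiplication by $S$ and by $S^{*}$ are mutually adjoint right-$A$-linear maps $\Lambda\to\Lambda$; writing $T_{S}\in{\cal L}_{A}(\Lambda)$ for the first, put $T:=\Phi^{-1}T_{S}\Phi\in{\cal L}_{A}(\Gamma\tr A)$. For $\gamma\in\Gamma$ the operator $\Phi T|\gamma\rangle_{\leg{1}}=T_{S}\Phi|\gamma\rangle_{\leg{1}}$ is the map $a\mapsto S\gamma a$; approximating $S\gamma\in\Lambda=[\Gamma A]$ in norm by finite sums $\sum_{i}\gamma_{i}a_{i}$ exhibits it as a norm limit of the operators $a\mapsto\sum_{i}\gamma_{i}a_{i}a$, i.e.\ inside $\Phi[|\Gamma\rangle_{\leg{1}}A]$, so $T|\gamma\rangle_{\leg{1}}\in[|\Gamma\rangle_{\leg{1}}A]$; the same argument with $S^{*}$ gives $T^{*}|\gamma\rangle_{\leg{1}}\in[|\Gamma\rangle_{\leg{1}}A]$. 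Hence $T\in\indgA'$. Finally, under $\Phi\tr\Id_{H}$ the operator $\iota(T)=T\tr\Id_{H}$ is conjugate to $T_{S}\tr\Id_{H}$ on $\Lambda\tr_{A}H\cong K$, which sends $\lambda\tr\zeta\mapsto S\lambda\tr\zeta\mapsto S(\lambda\zeta)$; since $[\Lambda H]=K$ this operator is $S$, so $\iota(T)=S$.

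The hard part is the surjectivity, and specifically the step of recognising an abstract operator $S\in\indgA\subseteq{\cal L}(K)$ as arising from an adjointable operator on $\Gamma\tr A$. The bridge is Lemma \ref{lemma:itp-concrete}: it turns the internal tensor product $\Gamma\tr A$ into the concrete $C^{*}$-module $[\Gamma A]$, on which $S$ acts simply by left multiplication, the adjoint being supplied by $S^{*}$. The remaining subtleties are bookkeeping --- checking that the representation of $\frakB$ on $A$ used to form $\Gamma\tr A$ is the left-multiplication representation appearing in Lemma \ref{lemma:itp-concrete}, and that the threefold identification $\Gamma\tr A\tr H\cong\Lambda\tr_{A}H\cong K$ is compatible with $\Phi\tr\Id_{H}$, both of which are immediate once the isomorphisms are written out.
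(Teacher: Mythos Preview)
Your proof is correct and follows essentially the same approach as the paper's: both use the identification of $\Gamma\tr A$ with the concrete $C^{*}$-module $[\Gamma A]\subseteq{\cal L}(H,K)$ (the paper invokes this embedding directly, you cite Lemma~\ref{lemma:itp-concrete}), and then realize the inverse on $S\in\indgA$ as left multiplication by $S$ on $[\Gamma A]$. The paper's version is much terser---it asserts the embedding $\indgA'\to\indgA$ as ``clear'' and leaves $\tilde T\in\indgA'$ and $\iota(\tilde T)=T$ as ``easily verified''---whereas you have spelled out these verifications carefully.
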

\begin{proof}
  Clearly, $\iota$ defines an embedding $\indgA' \to \indgA$. We show
  that this embedding is surjective. Let $T \in \indgA$. Since $\Gamma
  \tr A$ embeds into ${\cal L}(H,K)$ via $\gamma \tr a \mapsto\gamma
  a$ and since $T \Gamma A \subseteq \lnspan \Gamma A\rnspan$, left
  multiplication by $T$ on $\lnspan \Gamma A\rnspan$ corresponds to a
  map $\tilde T \colon \Gamma \tr A \to \Gamma \tr A$.  One easily
  verifies  $\tilde T \in \indgA'$ and $\iota(\tilde T)=T$.
\end{proof}

Let us give two more equivalent descriptions of the $C^{*}$-algebra
$\indgA$.  Denote by $\tau_{\Gamma}$ the locally convex topology on
${\cal L}(K)$ induced by the family of semi-norms $\big(T \mapsto \|
T\gamma\|\big)_{\gamma \in \Gamma}$ and $\big(T \mapsto \|
T^{*}\gamma\|\big)_{\gamma \in \Gamma}$. Given a subset $X \subseteq
{\cal L}(K)$, denote by $\lnspan X\rnspan_{\Gamma}$ the
$\tau_{\Gamma}$-closed linear span of $X$.
\begin{lemma}
  For every   bounded approximate unit  $(u_{i})_{i}$ of
  the $C^{*}$-algebra $\frakC$ and every $T \in \indgA$, the net
  $\big(u_{i}Tu_{i}\big)_{i}$ converges to $T$ w.r.t.\
  $\tau_{\Gamma}$. 
\end{lemma}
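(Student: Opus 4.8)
The statement to prove is that for a bounded approximate unit $(u_i)_i$ of $\frakC = \lnspan \Gamma\Gamma^*\rnspan$ and any $T \in \indgA$, the net $(u_i T u_i)_i$ converges to $T$ in the topology $\tau_\Gamma$. Recall that $\tau_\Gamma$ is generated by the seminorms $T \mapsto \|T\gamma\|$ and $T \mapsto \|T^*\gamma\|$ for $\gamma \in \Gamma$. So by definition of convergence in a locally convex topology, what I must show is that for every fixed $\gamma \in \Gamma$,
\begin{align*}
  \|u_i T u_i \gamma - T\gamma\| \to 0 \quad \text{and} \quad \|(u_i T u_i)^* \gamma - T^*\gamma\| \to 0.
\end{align*}
By symmetry (since $T \in \indgA$ iff $T^* \in \indgA$, and $(u_iTu_i)^* = u_iT^*u_i$), it suffices to handle the first of these for arbitrary $T \in \indgA$ and $\gamma \in \Gamma$.

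First I would record the key approximate-identity fact: since $(u_i)_i$ is an approximate unit for $\frakC = \lnspan \Gamma\Gamma^*\rnspan$ and $\Gamma$ is a concrete $C^*$-module with $\lnspan \Gamma\Gamma^*\Gamma\rnspan = \Gamma$, we have $u_i \gamma \to \gamma$ for every $\gamma \in \Gamma$. This is the standard fact that a left approximate unit for the $C^*$-algebra $\frakC = \lnspan \Gamma\Gamma^*\rnspan$ acts as a two-sided approximate unit on the left Hilbert module $\Gamma$; concretely, for $\gamma = \gamma_1\gamma_2^*\gamma_3$ one has $\|u_i\gamma - \gamma\| = \|(u_i\gamma_1\gamma_2^* - \gamma_1\gamma_2^*)\gamma_3\| \le \|u_i c - c\|\,\|\gamma_3\|$ with $c = \gamma_1\gamma_2^* \in \frakC$, and these span a dense subset of $\Gamma$. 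I would then split the estimate as
\begin{align*}
  \|u_i T u_i \gamma - T\gamma\| \le \|u_i T u_i \gamma - u_i T\gamma\| + \|u_i T\gamma - T\gamma\|,
\end{align*}
and bound the two terms separately: the first by $\|u_i\|\,\|T\|\,\|u_i\gamma - \gamma\| \le M\|T\|\,\|u_i\gamma - \gamma\|$ where $M$ bounds the approximate unit.

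The second term $\|u_iT\gamma - T\gamma\|$ is where the hypothesis $T \in \indgA$ enters and is the only genuinely substantive point. By definition $T\Gamma \subseteq \lnspan \Gamma A\rnspan$, so $T\gamma$ lies in the closure of $\lnspan \Gamma A\rnspan \subseteq {\cal L}(H,K)$. For an element of the dense subspace of the form $T\gamma = \gamma' a$ with $\gamma' \in \Gamma$, $a \in A$, the crucial computation is that $u_i$ acts approximately as a left unit: $\|u_i\gamma' a - \gamma' a\| \le \|u_i\gamma' - \gamma'\|\,\|a\| \to 0$ by the fact established above. A routine $\varepsilon/3$ argument, approximating $T\gamma$ by such finite sums and using the uniform bound $\|u_i\| \le M$, then gives $\|u_iT\gamma - T\gamma\| \to 0$ for the actual element $T\gamma$. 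The main obstacle is thus purely bookkeeping: making sure the approximate-unit action on $\Gamma$ transfers to $\lnspan \Gamma A\rnspan$, which follows because $u_i$ multiplies the left leg $\Gamma$ of each product $\gamma' a$ and leaves the $A$-factor untouched. Combining the two bounds and letting $i$ run shows $\|u_iTu_i\gamma - T\gamma\| \to 0$, completing the proof.
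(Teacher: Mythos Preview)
Your proof is correct and follows essentially the same route as the paper: the same triangle-inequality split $\|u_iTu_i\gamma - T\gamma\| \le \|u_i\|\,\|T\|\,\|u_i\gamma-\gamma\| + \|u_iT\gamma - T\gamma\|$, with the second term handled via $T\gamma \in \lnspan\Gamma A\rnspan$ and $u_i\gamma' \to \gamma'$ for $\gamma'\in\Gamma$. One small quibble: $(u_iTu_i)^* = u_i^*T^*u_i^*$, not $u_iT^*u_i$, but since $(u_i^*)_i$ is again a bounded approximate unit for $\frakC$ (or since approximate units in $C^*$-algebras are standardly taken self-adjoint), your symmetry reduction still goes through.
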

\begin{proof}
  Let $(u_{i})_{i}$ and $T$ as above and $\gamma \in \Gamma$. Then
  \begin{align*} 
    \| u_{i} T u_{i} \gamma - T\gamma\| \leq (\sup_{i} \|u_{i}\|) \|T\|
    \|u_{i}\gamma - \gamma\| - \|u_{i} T\gamma - T\gamma\|
  \end{align*}
  for all $i$, and using $T\gamma \subseteq \lnspan \Gamma A \rnspan$
  and $\lim_{i} u_{i}\gamma'= \gamma'$ for all $\gamma' \in \Gamma$,
  we conclude $\lim_{i} \| u_{i}Tu_{i}\gamma - T\gamma\|=0$.
  Similarly, $\lim_{i} \| u_{i}^{*} T^{*}u_{i}^{*} \gamma -
  T^{*}\gamma\| = 0$.
\end{proof}
\begin{lemma}
$\indgA = \lnspan \Gamma A \Gamma^{*}\rnspan_{\Gamma}$.
\end{lemma}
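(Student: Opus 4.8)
The plan is to prove the two inclusions separately. Throughout I use that $\tau_{\Gamma}$ is coarser than the norm topology on ${\cal L}(K)$: each defining seminorm satisfies $\|T\gamma\|\leq\|T\|\,\|\gamma\|$ and $\|T^{*}\gamma\|\leq\|T\|\,\|\gamma\|$, so norm convergence implies $\tau_{\Gamma}$-convergence, and consequently the norm-closed linear span of any subset is contained in its $\tau_{\Gamma}$-closed linear span. I first record that $\indgA$ is itself $\tau_{\Gamma}$-closed: if $T_{i}\to T$ in $\tau_{\Gamma}$ with $T_{i}\in\indgA$, then for each $\gamma\in\Gamma$ we have $T\gamma=\lim_{i}T_{i}\gamma$ and $T^{*}\gamma=\lim_{i}T_{i}^{*}\gamma$ in norm, and since $\lnspan\Gamma A\rnspan$ is norm-closed, both limits lie in $\lnspan\Gamma A\rnspan$; thus $T\in\indgA$.

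For the inclusion $\lnspan\Gamma A\Gamma^{*}\rnspan_{\Gamma}\subseteq\indgA$ it suffices, by the closedness just noted, to show $\Gamma A\Gamma^{*}\subseteq\indgA$. Taking adjoints in the hypothesis $\lnspan A\frakB\rnspan\subseteq A$ gives $\lnspan\frakB A\rnspan\subseteq A$ as well. Hence, for $\gamma,\gamma'\in\Gamma$, $a\in A$, and any $\gamma''\in\Gamma$, we have $\gamma'^{*}\gamma''\in\Gamma^{*}\Gamma\subseteq\frakB$, so $(\gamma a\gamma'^{*})\gamma''=\gamma\,a(\gamma'^{*}\gamma'')\in\Gamma A\subseteq\lnspan\Gamma A\rnspan$; the same computation applied to the adjoint $(\gamma a\gamma'^{*})^{*}=\gamma'a^{*}\gamma^{*}$ shows $(\gamma a\gamma'^{*})^{*}\Gamma\subseteq\lnspan\Gamma A\rnspan$. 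Thus $\gamma a\gamma'^{*}\in\indgA$, and linearity gives $\lspan\Gamma A\Gamma^{*}\subseteq\indgA$.

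For the reverse inclusion, let $T\in\indgA$ and fix a bounded approximate unit $(u_{i})_{i}$ of $\frakC=\lnspan\Gamma\Gamma^{*}\rnspan$. By the preceding lemma, $u_{i}Tu_{i}\to T$ in $\tau_{\Gamma}$, so it is enough to show each $u_{i}Tu_{i}$ lies in $\lnspan\Gamma A\Gamma^{*}\rnspan_{\Gamma}$; I shall in fact place it in the smaller norm-closed span $\lnspan\Gamma A\Gamma^{*}\rnspan$. The key observation is that $\gamma^{*}T\gamma'\in A$ for all $\gamma,\gamma'\in\Gamma$: indeed $T\gamma'\in\lnspan\Gamma A\rnspan$ and $\gamma^{*}\Gamma\subseteq\Gamma^{*}\Gamma\subseteq\frakB$, so $\gamma^{*}T\gamma'\in\lnspan\frakB A\rnspan\subseteq A$. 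Now approximate $u_{i}$ in norm by a finite sum $v=\sum_{k}\gamma_{k}\gamma_{k}'^{*}\in\lspan\Gamma\Gamma^{*}$; then $vTv=\sum_{k,l}\gamma_{k}(\gamma_{k}'^{*}T\gamma_{l})\gamma_{l}'^{*}\in\lspan\Gamma A\Gamma^{*}$, and since $vTv\to u_{i}Tu_{i}$ in norm as $v\to u_{i}$, we get $u_{i}Tu_{i}\in\lnspan\Gamma A\Gamma^{*}\rnspan$. As the norm-closed span sits inside the $\tau_{\Gamma}$-closed span, which is $\tau_{\Gamma}$-closed, letting $i\to\infty$ yields $T\in\lnspan\Gamma A\Gamma^{*}\rnspan_{\Gamma}$.

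The real content lies entirely in this second inclusion, and within it the two things to get right are the identity $\gamma^{*}T\gamma'\in A$ (which is where the compatibility $\lnspan\frakB A\rnspan\subseteq A$ enters) and the double norm-approximation placing $u_{i}Tu_{i}$ in $\lnspan\Gamma A\Gamma^{*}\rnspan$. The genuinely delicate step, the convergence $u_{i}Tu_{i}\to T$ in $\tau_{\Gamma}$, is already furnished by the preceding lemma, so no new estimate is needed here.
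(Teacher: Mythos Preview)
Your proof is correct and follows essentially the same approach as the paper's. Both directions use the same ideas: for $\supseteq$ you show $\Gamma A\Gamma^{*}\subseteq\indgA$ and invoke $\tau_{\Gamma}$-closedness of $\indgA$, while the paper checks directly that any $T$ in the $\tau_{\Gamma}$-closed span satisfies $T\Gamma\subseteq[\Gamma A\Gamma^{*}\Gamma]\subseteq[\Gamma A]$; for $\subseteq$ both arguments use the preceding lemma to get $u_{i}Tu_{i}\to T$ in $\tau_{\Gamma}$ and then place $u_{i}Tu_{i}$ in $[\Gamma A\Gamma^{*}]$. The only cosmetic difference is that the paper reaches $u_{i}Tu_{i}\in[\Gamma\Gamma^{*}T\Gamma\Gamma^{*}]\subseteq[\Gamma\Gamma^{*}\Gamma A\Gamma^{*}]=[\Gamma A\Gamma^{*}]$ in one line via $T\Gamma\subseteq[\Gamma A]$ and the concrete $C^{*}$-module identity $[\Gamma\Gamma^{*}\Gamma]=\Gamma$, whereas you route through the intermediate fact $\Gamma^{*}T\Gamma\subseteq A$ (which is precisely the content of the lemma immediately following in the paper) and a finite-sum approximation of $u_{i}$.
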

\begin{proof}
  ``$\subseteq$'': Let $T \in \indgA$. Choose an approximate unit
  $(u_{i})_{i}$ for $\frakC$ and put $T_{i}:=u_{i} T u_{i}$ for each
  $i$. Then $(T_{i})_{i}$ converges to $T$ w.r.t.\ $\tau_{\Gamma}$,
  and $T_{i} \in \lnspan \Gamma\Gamma^{*} T\Gamma\Gamma^{*} \rnspan
  \subseteq \lnspan\Gamma\Gamma^{*}\Gamma A\Gamma^{*} \rnspan =
  \lnspan\Gamma A\Gamma^{*}\rnspan$  for all $i$.  Hence, $T \in
  \lnspan\Gamma A \Gamma^{*}\rnspan_{\Gamma}$.

  \smallskip

  ``$\supseteq$'': If $T \in \lnspan\Gamma A
  \Gamma^{*}\rnspan_{\Gamma}$, then $T\Gamma \subseteq
  \lnspan\Gamma A \Gamma^{*}\Gamma\rnspan = \lnspan\Gamma A
  \frakB\rnspan \subseteq \lnspan \Gamma A\rnspan$ and similarly
  $T^{*}\Gamma \subseteq \lnspan \Gamma A\rnspan$, whence $T \in
  \indgA$.
\end{proof}
\begin{lemma}
  $\indgA = \big\{ T \in \ind{\Gamma}{{\cal L}(H)} \,\big|\, \Gamma^{*} T
 \Gamma \subseteq A \big\}$.
\end{lemma}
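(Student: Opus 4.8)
The plan is to prove the set equality $\indgA = \{ T \in \ind{\Gamma}{{\cal L}(H)} \mid \Gamma^{*} T \Gamma \subseteq A\}$ by double inclusion, using the definition $\indgA = \{T \in {\cal L}(K) \mid T\Gamma, T^{*}\Gamma \subseteq [\Gamma A]\}$ and the analogous description of $\ind{\Gamma}{{\cal L}(H)}$ obtained by setting $A = {\cal L}(H)$ in that definition. Let me check both directions.

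First I would treat ``$\subseteq$''. Let $T \in \indgA$. Since $[A\frakB] \subseteq A$ and ${\cal L}(H)$ trivially absorbs $\frakB$, and since $[\Gamma A] \subseteq [\Gamma {\cal L}(H)]$, the conditions $T\Gamma, T^{*}\Gamma \subseteq [\Gamma A] \subseteq [\Gamma {\cal L}(H)]$ immediately give $T \in \ind{\Gamma}{{\cal L}(H)}$. For the side condition, I would compute $\Gamma^{*} T \Gamma \subseteq \Gamma^{*}[\Gamma A] = [\Gamma^{*}\Gamma A] = [\frakB A] \subseteq A$, where the last inclusion uses $[A\frakB] \subseteq A$ together with the fact that $A$ is a $C^{*}$-algebra, hence self-adjoint, so $[\frakB A] = [(A^{*}\frakB^{*})^{*}] = [(A\frakB)^{*}]^{*}$—more directly, $\frakB$ is self-adjoint and $[A\frakB]\subseteq A$ implies $[\frakB A] = [\frakB^* A^*]^*\subseteq\dots$; cleanest is $[\frakB A]^{*} = [A^{*}\frakB^{*}] = [A\frakB] \subseteq A$, so $[\frakB A] \subseteq A^{*} = A$. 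Thus $\Gamma^{*}T\Gamma \subseteq A$.

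For ``$\supseteq$'', suppose $T \in \ind{\Gamma}{{\cal L}(H)}$ with $\Gamma^{*}T\Gamma \subseteq A$. I must show $T\Gamma \subseteq [\Gamma A]$ and likewise for $T^{*}$. The key move exploits the concrete $C^{*}$-module identity $\Gamma = [\Gamma\Gamma^{*}\Gamma]$: writing $T\Gamma = T[\Gamma\Gamma^{*}\Gamma] \subseteq [T\Gamma \cdot \Gamma^{*}\Gamma]$ does not yet help, so instead I would bracket from the left using $[\Gamma\Gamma^{*}] = \frakC$ acting as an approximate unit on $K$. Since $T\Gamma \subseteq [\Gamma {\cal L}(H)]$ already (from $T \in \ind{\Gamma}{{\cal L}(H)}$), and elements of $K$ satisfy $[\frakC \Gamma] = \Gamma$, I get $T\Gamma = [\Gamma\Gamma^{*} T\Gamma]$ up to closure; more precisely $T\Gamma \subseteq [\Gamma\Gamma^{*}]T\Gamma = [\Gamma(\Gamma^{*}T\Gamma)] \subseteq [\Gamma A]$, using $\Gamma^{*}T\Gamma \subseteq A$ in the last step. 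The symmetric argument for $T^{*}$ requires $\Gamma^{*}T^{*}\Gamma \subseteq A$, which follows by taking adjoints: $\Gamma^{*}T^{*}\Gamma = (\Gamma^{*}T\Gamma)^{*} \subseteq A^{*} = A$. Hence $T \in \indgA$.

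The main obstacle is the justification of $T\Gamma = [\Gamma\Gamma^{*}T\Gamma]$, i.e.\ that multiplying on the left by the approximate unit of $\frakC = [\Gamma\Gamma^{*}]$ recovers $T\Gamma$. This is where I would invoke that $\Gamma$ is a concrete $C^{*}$-module with $[\Gamma H] = K$, so $\frakC$ is a nondegenerate $C^{*}$-algebra on $K$ acting as a left approximate unit on $[\Gamma {\cal L}(H)]$; since $T\Gamma \subseteq [\Gamma {\cal L}(H)]$ and each $\gamma \in \Gamma$ satisfies $\gamma = \lim u_{i}\gamma$ for an approximate unit $(u_i)$ of $\frakC$, one gets $T\gamma = \lim u_i T\gamma \in [\frakC T\Gamma] = [\Gamma\Gamma^{*}T\Gamma]$. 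Everything else is routine bracket manipulation using the self-adjointness of $A$ and $\frakB$ and the defining identities of concrete $C^{*}$-modules.
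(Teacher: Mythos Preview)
Your proof is correct and follows essentially the same approach as the paper: the forward inclusion is immediate from $\Gamma^{*}T\Gamma \subseteq [\frakB A] \subseteq A$, and for the reverse you use an approximate unit $(u_i)$ of $\frakC=[\Gamma\Gamma^{*}]$ together with $T\Gamma \subseteq [\Gamma{\cal L}(H)]$ to get $T\gamma = \lim u_i T\gamma \in [\Gamma\Gamma^{*}T\Gamma] \subseteq [\Gamma A]$, exactly as the paper does. Your write-up is in fact more explicit than the paper's on both the forward direction and on why the approximate-unit convergence $u_i T\gamma \to T\gamma$ holds.
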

\begin{proof}
  Clearly, $\indgA \subseteq \ind{\Gamma}{{\cal L}(H)}$. Let $T \in
  \ind{\Gamma}{{\cal L}(H)}$ such that $\Gamma^{*}T\Gamma \subseteq
  A$. Choose an approximate unit $(u_{i})_{i}$ for $\frakC$. Then for
  each $\gamma \in \Gamma$, the net $\big(u_{i}T\gamma\big)_{i}$
  converges in norm to $T\gamma$, and hence $T\Gamma \subseteq
  \lnspan\Gamma\Gamma^{*} T\Gamma \rnspan = \lnspan\Gamma A\rnspan$.
  Similarly, one shows that $T^{*}\Gamma \subseteq \lnspan \Gamma
  A\rnspan$.  Hence, $T \in \indgA$.
\end{proof}

\subsection{Definition and basic properties}
\label{subsection:fp-definition}

Throughout this paragraph, let $\cbasesb$ be a $C^{*}$-base.
We adopt the following terminology:
\begin{definition} \label{definition:fp-calgebra} 
Let $H$ be a Hilbert
  space, $\alpha\in \cfact(H;\cbasesb)$, and $A \subseteq {\cal L}(H)$
  a $C^{*}$-algebra. We call $A$ an {\em $\alpha$-module} and the
  triple $(H,A,\alpha)$ a {\em concrete $C^{*}$-$\cbasesb$-algebra} if
  $\rho_{\alpha}(\frakB^{\dagger})A \subseteq A$. If $A$ is
  nondegenerate, we call $(H,A,\alpha)$ nondegenerate.
 We put
  \begin{align*}
    \cfact(A;\cbasesb):=\{ \beta \in \cfact(H;\cbasesb) \mid A \text{
      is an } \beta\text{-module}\}
  \end{align*}
  and, if  $\alpha \in \cfact(A;\cbasesb)$ and $\cbasesc$ is a second
  $C^{*}$-base,
  \begin{align*}
   \cfact(A_{\alpha};\cbasesc):=\{ \beta \in \cfact(A;\cbasesb) \mid
    \beta \perp \alpha\}. 
  \end{align*}
  If $\alpha \in \cfact(A;\cbasesb)$ and $\beta \in
  \cfact(A_{\alpha};\cbasesc)$ (and $A$ is nondegenerate), we call
  $(H,A,\alpha,\beta)$ a {\em (nondegenerate) concrete
    $C^{*}$-$\cbasesb$-$\cbasesc$-algebra}.
\end{definition}

We define the fiber product of a concrete $C^{*}$-$\cbasesb$-algebra
$(H,A,\alpha)$ and a concrete $C^{*}$-$\cbaseosb$-algebra
$(K,B,\beta)$ by inducing $A$ and $B$ to $C^{*}$-algebras on
$\HfibreK$, using the subspaces
\begin{align*}
  \begin{aligned}
    \kbeta{1} &\subseteq {\cal L}(H,\HfibreK) &&\text{and} &\kalpha{1}
    &\subseteq {\cal L}(K,\HfibreK),
  \end{aligned}
\end{align*}
respectively, which were defined in Subsection
\ref{subsection:rtp-definition}.
\begin{definition} \label{definition:fp}  The {\em fiber product} of a
concrete  $C^{*}$-$\cbasesb$-algebra $(H,A,\alpha)$ and a concrete
  $C^{*}$-$\cbaseosb$-algebra $(K,B,\beta)$ is the $C^{*}$-algebra
  \begin{align*}
    \AfibreB := \ind{\kalpha{1}}{B} \cap \ind{\kbeta{2}}{A} \subseteq
    {\cal L}(\HfibreK).
  \end{align*}
\end{definition}

Unfortunately, we can not yet answer the following important question:
\begin{question}
  When is $\AfibreB \subseteq {\cal L}(\HfibreK)$ nondegenerate?
\end{question}

Note that if $(A,H,\alpha)$ is a concrete $C^{*}$-$\cbasesb$-algebra,
then $A' \subseteq \rho_{\alpha}(\frakB^{\dagger})'$.
\begin{lemma} \label{lemma:fp-commute} Let $(H,A,\alpha)$ be a
  concrete $C^{*}$-$\cbasesb$-algebra and $(K,B,\beta)$ a concrete
  $C^{*}$-$\cbaseosb$-algebra.  Then
  $(A' \rtensorh \Id) + (\Id  \rtensorh B') \subseteq (\AfibreB)'$.
\end{lemma}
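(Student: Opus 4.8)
The plan is to treat the two summands separately and symmetrically, since $(\AfibreB)'$ is a linear subspace of ${\cal L}(\HfibreK)$. I will show that $(A' \rtensorh \Id) \subseteq (\ind{\kbeta{2}}{A})'$ and, by an analogous argument, that $(\Id \rtensorh B') \subseteq (\ind{\kalpha{1}}{B})'$; because $\AfibreB = \ind{\kalpha{1}}{B} \cap \ind{\kbeta{2}}{A}$ is contained in each of these two induced algebras, both inclusions land inside $(\AfibreB)'$, and the claim follows by adding them. Thus it suffices to fix $S \in A'$ and show that $S \rtensorh \Id$ commutes with every $X \in \ind{\kbeta{2}}{A}$.

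First I would record two preliminary facts. Since $(H,A,\alpha)$ is a concrete $C^{*}$-$\cbasesb$-algebra, the remark preceding this lemma gives $A' \subseteq \rho_{\alpha}(\frakB^{\dagger})'$; in particular $S \rho_{\alpha}(b^{\dagger}) = \rho_{\alpha}(b^{\dagger}) S$ for all $b^{\dagger} \in \frakB^{\dagger}$, so that $S \rtensorh \Id$ is a well-defined element of ${\cal L}(\HfibreK)$ by case (ii) of the functoriality construction, acting by $h \tl \eta \mapsto Sh \tl \eta$. From this description one reads off the intertwining relation
\begin{align*}
  (S \rtensorh \Id)|\eta\rangle_{\leg{2}} = |\eta\rangle_{\leg{2}} S \qquad \text{for all } \eta \in \beta,
\end{align*}
since $(S \rtensorh \Id)(h \tl \eta) = Sh \tl \eta = |\eta\rangle_{\leg{2}} Sh$ for every $h \in H$.

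The main step is the commutation itself. Since $\lnspan \kbeta{2} H\rnspan = \HfibreK$, it is enough to check that $(S \rtensorh \Id) X$ and $X (S \rtensorh \Id)$ agree on every vector $|\eta\rangle_{\leg{2}} h$ with $\eta \in \beta$, $h \in H$. Using the intertwining relation, $X(S \rtensorh \Id)|\eta\rangle_{\leg{2}} h = X |\eta\rangle_{\leg{2}} Sh$. On the other hand, membership $X \in \ind{\kbeta{2}}{A}$ gives $X|\eta\rangle_{\leg{2}} \in \lnspan \kbeta{2} A\rnspan$, so $X|\eta\rangle_{\leg{2}}$ is a norm-limit of finite sums $\sum_{i} |\eta_{i}\rangle_{\leg{2}} a_{i}$ with $\eta_{i} \in \beta$, $a_{i} \in A$; applying the bounded operator $S \rtensorh \Id$ and the intertwining relation term by term exhibits $(S \rtensorh \Id) X |\eta\rangle_{\leg{2}} h$ as the limit of $\sum_{i} |\eta_{i}\rangle_{\leg{2}} S a_{i} h$, while $X|\eta\rangle_{\leg{2}} Sh$ is the limit of $\sum_{i} |\eta_{i}\rangle_{\leg{2}} a_{i} S h$. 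These agree because $S \in A'$ forces $S a_{i} = a_{i} S$, and the cross terms cancel. Hence $S \rtensorh \Id \in (\ind{\kbeta{2}}{A})'$.

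The symmetric summand is handled identically after replacing $\kbeta{2}$ by $\kalpha{1}$, the role of $\alpha$ by $\beta$, and $A$ by $B$: for $T \in B' \subseteq \rho_{\beta}(\frakB)'$ one obtains the well-defined operator $\Id \rtensorh T$ with $(\Id \rtensorh T)|\xi\rangle_{\leg{1}} = |\xi\rangle_{\leg{1}} T$ for $\xi \in \alpha$, and the same spanning-and-approximation argument on $\lnspan \kalpha{1} K\rnspan = \HfibreK$, now invoking $T \in B'$, yields $\Id \rtensorh T \in (\ind{\kalpha{1}}{B})'$. I expect the only genuinely delicate point to be the bookkeeping in the main step: one must ensure that the approximation of $X|\eta\rangle_{\leg{2}}$ by elements of $\lnspan \kbeta{2} A\rnspan$ is taken in operator norm and that the bounded operator $S \rtensorh \Id$ may be exchanged with the limit, as this is precisely what makes the commutators $[S, a_{i}]$ appear and vanish.
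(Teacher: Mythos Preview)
Your proof is correct and follows essentially the same route as the paper: both use the intertwining relation $(S \rtensorh \Id)|\eta\rangle_{\leg{2}} = |\eta\rangle_{\leg{2}} S$, the fact that $X|\eta\rangle_{\leg{2}}$ lies in $\lnspan \kbeta{2} A\rnspan$, and that $S$ commutes with $A$, then conclude by density of $\lnspan \kbeta{2} H\rnspan$ in $\HfibreK$. Your version is slightly sharper in that you explicitly observe $(A' \rtensorh \Id) \subseteq (\ind{\kbeta{2}}{A})'$ rather than just $(\AfibreB)'$, and you spell out the approximation step carefully, but the underlying argument is the same.
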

\begin{proof}
  Let $T \in \AfibreB$ and $S \in A'$. Then for
  each $\eta\in \beta$, 
  \begin{align*}
    T(S \rtensorh \Id) |\eta\rangle_{\leg{2}} =
    T|\eta\rangle_{\leg{2}} S =      |\eta\rangle_{\leg{2}} T(S \rtensorh \Id)
  \end{align*}
  because $T|\eta\rangle_{\leg{2}} \in \lnspan
  \kbeta{2}A\rnspan$. Hence, $T(S \rtensorh \Id)=(S \rtensorh
  \Id)T$. A similar argument shows that $T(\Id \rtensorh R)=(\Id
  \rtensorh R)T$ for all $R \in B'$.
\end{proof}
\begin{lemma} \label{lemma:fp-stable} Let $(H,A,\alpha)$ be a concrete
  $C^{*}$-$\cbasesb$-algebra, $(K,B,\beta)$ a concrete
  $C^{*}$-$\cbaseosb$-algebra, and $\cbasesc$ a $C^{*}$-base. Then
  \begin{align*}
    \gamma \lt \beta &\in \cfact(\AfibreB;\cbasesc) \quad \text{for
      each }
    \gamma \in \cfact(A_{\alpha};\cbasesc), \\
    \alpha \rt \delta &\in \cfact(\AfibreB;\cbasesc)  \quad \text{for
      each }  \delta \in \cfact(B_{\beta};\cbasesc).
  \end{align*}
\end{lemma}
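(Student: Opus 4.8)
The plan is to verify directly the two conditions for $\gamma \lt \beta$ to lie in $\cfact(\AfibreB;\cbasesc)$: first that $\gamma \lt \beta$ is a $C^{*}$-factorization of $\HfibreK$ with respect to $\cbasesc$, and second that $\AfibreB$ is a $(\gamma \lt \beta)$-module, i.e.\ $\rho_{(\gamma \lt \beta)}(\frakC^{\dagger})(\AfibreB) \subseteq \AfibreB$. Since $\gamma \in \cfact(A_{\alpha};\cbasesc)$ satisfies $\gamma \perp \alpha$, we have $\gamma \in \cfact(H_{\alpha};\cbasesc)$, so Proposition \ref{proposition:rtp-factorization-maps} immediately yields $\gamma \lt \beta \in \cfact(\HfibreK;\cbasesc)$ together with the identification $\rho_{(\gamma \lt \beta)} = (\rho_{\gamma})_{\leg{1}}$. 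This settles the first condition and reduces the lemma to the module property.

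For the module property I fix $c^{\dagger} \in \frakC^{\dagger}$ and $T \in \AfibreB$ and show that $\rho_{\gamma}(c^{\dagger})_{\leg{1}} T$ lies in both $\ind{\kalpha{1}}{B}$ and $\ind{\kbeta{2}}{A}$. The two computations I would carry out first are the elementary relations
\begin{align*}
  \rho_{\gamma}(c^{\dagger})_{\leg{1}} |\xi\rangle_{\leg{1}} &= |\rho_{\gamma}(c^{\dagger})\xi\rangle_{\leg{1}}, & \rho_{\gamma}(c^{\dagger})_{\leg{1}} |\eta\rangle_{\leg{2}} &= |\eta\rangle_{\leg{2}}\, \rho_{\gamma}(c^{\dagger}),
\end{align*}
for $\xi \in \alpha$ and $\eta \in \beta$, obtained by evaluating $\rho_{\gamma}(c^{\dagger})_{\leg{1}} = \rho_{\gamma}(c^{\dagger})_{\alpha} \tr \Id_{K}$ on $\xi \tr \zeta$ and $\rho_{\gamma}(c^{\dagger})_{\leg{1}} = \rho_{\gamma}(c^{\dagger}) \tl \Id_{\beta}$ on $\zeta \tl \eta$. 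Here $\rho_{\gamma}(c^{\dagger}) \in {\cal L}(H_{\alpha}) \cap \rho_{\alpha}(\frakB^{\dagger})'$ by Remark \ref{remark:rtp-commute} (using $\gamma \perp \alpha$), so both expressions for $\rho_{\gamma}(c^{\dagger})_{\leg{1}}$ are defined and agree, and $\rho_{\gamma}(c^{\dagger})\xi \in \alpha$ because $\rho_{\gamma}(\frakC^{\dagger})\alpha \subseteq \alpha$. The first relation gives $\rho_{\gamma}(c^{\dagger})_{\leg{1}}\kalpha{1} \subseteq \kalpha{1}$, the second gives $\rho_{\gamma}(c^{\dagger})_{\leg{1}}\kbeta{2} = \kbeta{2}\rho_{\gamma}(c^{\dagger})$.

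With these in hand, membership in $\ind{\kalpha{1}}{B}$ is direct: from $T\kalpha{1}, T^{*}\kalpha{1} \subseteq \lnspan \kalpha{1}B\rnspan$ and the stability $\rho_{\gamma}(c^{\dagger})_{\leg{1}}\kalpha{1} \subseteq \kalpha{1}$ one checks $(\rho_{\gamma}(c^{\dagger})_{\leg{1}} T)\kalpha{1} \subseteq \lnspan\kalpha{1}B\rnspan$ and likewise for the adjoint, using $(\rho_{\gamma}(c^{\dagger})_{\leg{1}})^{*} = \rho_{\gamma}((c^{\dagger})^{*})_{\leg{1}}$. For $\ind{\kbeta{2}}{A}$ I use the second relation: $(\rho_{\gamma}(c^{\dagger})_{\leg{1}} T)\kbeta{2} \subseteq \rho_{\gamma}(c^{\dagger})_{\leg{1}}\lnspan\kbeta{2}A\rnspan = \lnspan\kbeta{2}\rho_{\gamma}(c^{\dagger})A\rnspan \subseteq \lnspan\kbeta{2}A\rnspan$, while for the adjoint $(\rho_{\gamma}(c^{\dagger})_{\leg{1}} T)^{*}\kbeta{2} = T^{*}\kbeta{2}\rho_{\gamma}((c^{\dagger})^{*}) \subseteq \lnspan\kbeta{2}A\rho_{\gamma}((c^{\dagger})^{*})\rnspan \subseteq \lnspan\kbeta{2}A\rnspan$. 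Intersecting, $\rho_{\gamma}(c^{\dagger})_{\leg{1}} T \in \AfibreB$, as required. The statement for $\alpha \rt \delta$ follows by the symmetric argument, interchanging the roles of $(\alpha, A, \kbeta{2})$ and $(\beta, B, \kalpha{1})$ (conceptually, transporting along the flip unitary $\Sigma$ of Proposition \ref{proposition:rtp-symmetric}, which conjugates $\AfibreB$ to $B \fibre{\beta}{\frakH}{\alpha} A$ and sends $\alpha \rt \delta$ to $\delta \lt \alpha$).

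The main obstacle --- indeed the only step that is not pure bookkeeping --- is the two-sided stability of $A$ under $\rho_{\gamma}(\frakC^{\dagger})$. The $\gamma$-module hypothesis gives only the left-stability $\rho_{\gamma}(\frakC^{\dagger})A \subseteq A$, which handles $(\rho_{\gamma}(c^{\dagger})_{\leg{1}} T)\kbeta{2}$; for the adjoint half of the $\ind{\kbeta{2}}{A}$ condition one must produce the right-stability $A\rho_{\gamma}(\frakC^{\dagger}) \subseteq A$, which is obtained by taking adjoints and invoking that both $A$ and $\rho_{\gamma}(\frakC^{\dagger})$ are $*$-closed.
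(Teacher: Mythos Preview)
Your proof is correct and follows essentially the same route as the paper: identify $\rho_{(\gamma \lt \beta)} = (\rho_{\gamma})_{\leg{1}}$ via Proposition~\ref{proposition:rtp-factorization-maps}, then verify $\rho_{\gamma}(\frakC^{\dagger})_{\leg{1}}(\AfibreB)\kalpha{1} \subseteq \lnspan\kalpha{1}B\rnspan$ and $\rho_{\gamma}(\frakC^{\dagger})_{\leg{1}}(\AfibreB)\kbeta{2} \subseteq \lnspan\kbeta{2}A\rnspan$ using the same two elementary relations. Your write-up is in fact more careful than the paper's, which suppresses the adjoint half of each $\Ind$-condition; your observation that the right-stability $A\rho_{\gamma}(\frakC^{\dagger}) \subseteq A$ follows from $*$-closedness is exactly what is tacitly used there.
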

\begin{proof}
  We only prove the first assertion, the second one follows similarly:
  \begin{align*}
    \rho_{(\gamma \lt \beta)}(\frakC^{\dagger})(\AfibreB)\kalpha{1}
    &\subseteq \lnspan
    \rho_{\gamma}(\frakC^{\dagger})_{\leg{1}}\kalpha{1}B\rnspan \\ &=
    \lnspan |\rho_{\gamma}(\frakC^{\dagger}) \alpha\rangle_{\leg{1}}
    B\rnspan\subseteq \lnspan \kalpha{1}B\rnspan, \\
    \rho_{(\gamma \lt \beta)}(\frakC^{\dagger})(\AfibreB)\kbeta{2}
    &\subseteq \lnspan
    \rho_{\gamma}(\frakC^{\dagger})_{\leg{1}}\kbeta{2}A\rnspan \\
    &= \lnspan \kbeta{2} \rho_{\gamma}(\frakC^{\dagger}) A \rnspan
    \subseteq \lnspan \kbeta{2} A \rnspan. \qedhere
  \end{align*}
\end{proof}

The fiber product introduced above seems to fail to be associative and
to behave like the functor that associates to two $C^{*}$-algebras
$A$, $B$ the multiplier algebra $M(A \otimes B)$ instead of the tensor
product $A \otimes B$.

More precisely, let $\cbasesb$, $\cbasesc$ be $C^{*}$-bases,
$(H,A,\alpha)$ a concrete $C^{*}$-$\cbasesb$-algebra,
$(K,B,\beta,\gamma)$ a concrete
$C^{*}$-$\cbaseosb$-$\cbasesc$-algebra, and $(L,C,\delta)$ a concrete
$C^{*}$-$\cbaseosc$-algebra. Then we can form the fiber products
$\AfibreB$ and $B \fibre{\gamma}{\frakK}{\delta} C$, and by Lemma
\ref{lemma:fp-stable} also the following iterated fiber products:
\begin{align*}
(\AfibreB)   \fibre{\alpha \rt \gamma}{\frakK}{\delta} C \quad
\text{and} \quad
A \fibre{\alpha}{\frakH}{\beta \lt \delta} (B
\fibre{\gamma}{\frakK}{\delta} C).
\end{align*}
We expect that these $C^{*}$-algebras are {\em not} identified by the
canonical isomorphism
\begin{align*}
  (\HfibreK) \rtensor{\alpha \rt
  \gamma}{\frakK}{\delta} L \mycong H \rtensor{\alpha}{\frakH}{\beta
  \lt \delta} (K \rtensor{\gamma}{\frakK}{\delta} L)
\end{align*}
of Proposition \ref{proposition:rtp-associative}.
\begin{remark} \label{remark:fp-associative}
  Assume that we are given some  $n \geq 1$ and 
  \begin{itemize}
  \item $C^{*}$-bases $\cbases{B_{1}}{H_{1}}, \ldots,
    \cbases{B_{n}}{H_{n}}$,
  \item a concrete $C^{*}$-$\cbases{B_{1}}{H_{1}}$-algebra
    $(H_{1},A_{1},\alpha_{1})$,
  \item  concrete
    $C^{*}$-$\cbaseos{B_{i-1}}{H_{i-1}}$-$\cbases{B_{i}}{H_{i}}$-algebras
    $(H_{i},A_{i},\beta_{i-1},\alpha_{i})$ for $i=2,\ldots,n$, 
  \item a concrete  $C^{*}$-$\cbaseos{B_{n}}{H_{n}}$-algebra
    $(H_{n+1},A_{n+1},\beta_{n})$.
  \end{itemize}
  By Proposition\ref{proposition:rtp-associative}, we can form an
  iterated $C^{*}$-relative tensor product
  \begin{align} \label{eq:fp-associative-hs}
    (H_{1}) \rtensor{\alpha_{1}}{\frakH_{1}}{\beta_{1}}\cdots 
    \rtensor{\alpha_{n}}{\frakH_{n}}{\beta_{n}} (H_{n+1}),
  \end{align}
  and by Lemma \ref{lemma:fp-stable}, we can form form various
  iterated fiber products like
  \begin{align*}
    (\cdots (A_{1} \ast A_{2}) \ast A_{3}) \ast \cdots )\ast A_{n+1},\quad
    A_{1} \ast (\cdots \ast (A_{n-1} \ast (A_{n} \ast A_{n+1}) \cdots ),
  \end{align*}
  which can be identified with $C^{*}$-algebras on the Hilbert space
  \eqref{eq:fp-associative-hs}. Here, all possible ways (that we want
  to consider) of forming an iterated fiber product correspond
  bijectively to all possible ways of completely bracketing a
  product consisting of $n+1$ factors or, equivalently, with all
  binary trees with $n+1$ leaves. We denote by
  \begin{align*}
    (A_{1}) \fibre{\alpha_{1}}{\frakH_{1}}{\beta_{1}} \cdots
    \fibre{\alpha_{n}}{\frakH_{n}}{\beta_{n}} (A_{n+1}) \subseteq
    {\cal L}\big((H_{1})
    \rtensor{\alpha_{1}}{\frakH_{1}}{\beta_{1}}\cdots
    \rtensor{\alpha_{n}}{\frakH_{n}}{\beta_{n}} (H_{n+1})\big)
  \end{align*}
  the intersection of all  $C^{*}$-algebras obtained by
  iterating the fiber product construction in the ways described above.
\end{remark}

\subsection{Functoriality}
\label{subsection:fp-functorial}

We shall see that the fiber product is functorial with respect to
the following class morphisms:
\begin{definition} \label{definition:fp-morphism} Let $\cbasesb$ be a
  $C^{*}$-base and $(H,A,\alpha)$, $(K,B,\beta)$ concrete
  $C^{*}$-$\cbasesb$-algebras. A {\em morphism} from $(H,A,\alpha)$ to
  $(K,B,\beta)$ is a $*$-homomorphism $\pi \colon A \to B$ satisfying
  the following conditions:
 \begin{enumerate}
 \item $\pi(a\rho_{\alpha}(b^{\dagger})) =
   \pi(a)\rho_{\beta}(b^{\dagger})$ for all $a \in A$ and $b^{\dagger}
   \in \frakB^{\dagger}$,
 \item $\beta = [{\cal
     L}^{\pi}(H_{\alpha},K_{\beta})\alpha]$, where
   \begin{align*}
     {\cal L}^{\pi}(H_{\alpha}, K_{\beta}) := \big\{ V \in {\cal
       L}(H_{\alpha},K_{\beta}) \,\big|\, \forall a \in A: \pi(a)V = V
     a\big\}.
   \end{align*}
 \end{enumerate}
   We denote the set of all morphisms from $(H,A,\alpha)$ to $(K,B,\beta)$ by
   $\Mor\big(A_{\alpha},B_{\beta})$.
 \end{definition}
 \begin{remark} \label{remark:fp-morphism-nondegenerate}
   Let $\cbasesb$ be a $C^{*}$-base and $\pi$ a morphism of concrete
   $C^{*}$-$\cbasesb$-algebras  $(H,A,\alpha)$ and $(K,B,\beta)$. Then 
   \begin{align*}
     K=\lnspan \beta \frakH\rnspan = \lnspan {\cal
       L}^{\pi}(H_{\alpha},K_{\beta}) \alpha \frakH\rnspan = \lnspan  {\cal
       L}^{\pi}(H_{\alpha},K_{\beta}) H\rnspan,
   \end{align*}
   and if $A\subseteq {\cal L}(H)$ is nondegenerate, then also $\pi(A)
   \subseteq {\cal L}(K)$ is nondegenerate
   because
   \begin{align}
     \begin{aligned}
       \lnspan \pi(A)K\rnspan &= \lnspan \pi(A) {\cal
         L}^{\pi}(H_{\alpha},K_{\beta}) H\rnspan \\ &= \lnspan {\cal
         L}^{\pi}(H_{\alpha},K_{\beta}) A H\rnspan = \lnspan {\cal
         L}^{\pi}(H_{\alpha},K_{\beta}) H\rnspan = K.
     \end{aligned}
   \end{align}
 \end{remark}
 For each $C^{*}$-base $\cbasesb$, the class of all concrete
 $C^{*}$-$\cbasesb$-algebras together with the morphisms defined above
 evidently forms a category.
\begin{proposition} \label{proposition:fp-functorial}
  Let $\cbasesb$ be a $C^{*}$-base and 
  \begin{itemize}
  \item $\phi$  a morphism of nondegenerate concrete
    $C^{*}$-$\cbasesb$-algebras $(H,A,\alpha)$ and $(L,\gamma,C)$,
  \item $\psi$ a morphism of nondegenerate concrete
    $C^{*}$-$\cbaseosb$-algebras $(K,B,\beta)$ and $(M,\delta,D)$.
  \end{itemize}
 Then there
  exists a unique $*$-homomorphism
  \begin{gather} \nonumber
    \phi \ast \psi \colon \AfibreB \to C
    \fibre{\gamma}{\frakH}{\delta} D
\shortintertext{such that}
 \label{eq:fp-functorial}
    (\phi \ast \psi)(T) \cdot (X \rtensorh Y) = (X \rtensorh Y) \cdot T
  \end{gather}
whenever  $T \in \AfibreB$ and one of the following conditions holds:
i)  $X \in {\cal L}^{\phi}(H,L)$ and $Y \in {\cal
   L}^{\psi}(K_{\beta},D_{\delta})$ or
ii) $X \in
  {\cal L}^{\phi}(H_{\alpha},L_{\gamma})$ and $Y \in {\cal
    L}^{\psi}(K,D)$.
\end{proposition}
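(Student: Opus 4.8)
The plan is to construct $(\phi\ast\psi)(T)$ by prescribing its action on the dense subspace $\lnspan\{(X\rtensorh Y)\xi\rnspan$ of $L\rtensor{\gamma}{\frakH}{\delta}M$ via $(\phi\ast\psi)(T)\cdot(X\rtensorh Y)\xi:=(X\rtensorh Y)T\xi$, and then to verify boundedness, the $*$-homomorphism property, and membership in $C\fibre{\gamma}{\frakH}{\delta}D$. First I would check that $X\rtensorh Y$ is defined in both cases. In either case $X$ satisfies $Xa=\phi(a)X$ and $Y$ satisfies $Yb=\psi(b)Y$; applying morphism condition i) of Definition \ref{definition:fp-morphism} for $\phi$ together with the $*$-structure gives $\phi(\rho_{\alpha}(b^{\dagger})a)=\rho_{\gamma}(b^{\dagger})\phi(a)$, and since $\rho_{\alpha}(\frakB^{\dagger})a\in A$ and $\lnspan AH\rnspan=H$ this yields $X\rho_{\alpha}(b^{\dagger})=\rho_{\gamma}(b^{\dagger})X$; symmetrically $Y\rho_{\beta}(b)=\rho_{\delta}(b)Y$. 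Hence in case i) the operator $X\rtensorh Y$ is defined via case (2) of the definition of $\rtensorh$ and in case ii) via case (1). Restricting to case ii), $(X\rtensorh Y)(\xi\tr\omega)=X\xi\tr Y\omega$, and since $\gamma=\lnspan{\cal L}^{\phi}(H_{\alpha},L_{\gamma})\alpha\rnspan$ (morphism condition ii)) and $M=\lnspan{\cal L}^{\psi}(K,D)K\rnspan$ (Remark \ref{remark:fp-morphism-nondegenerate}), the closed span of the ranges of all such $X\rtensorh Y$ is $\gamma\tr_{\rho_{\delta}}M=L\rtensor{\gamma}{\frakH}{\delta}M$. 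This density immediately gives uniqueness.

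The heart of the argument is the following Key Lemma: for any two operators $Z=X\rtensorh Y$ and $Z'=X'\rtensorh Y'$ (each from case i) or ii)), one has $Z^{*}Z'\in(\AfibreB)'$. Indeed, since $X$ and $X'$ both intertwine $\phi$ one gets $X^{*}X'\in A'$, and likewise $Y^{*}Y'\in B'$. A computation on elementary tensors using Proposition \ref{proposition:itp-operators} identifies $Z^{*}Z'=(X^{*}X')\rtensorh(Y^{*}Y')$, and because $A'\subseteq\rho_{\alpha}(\frakB^{\dagger})'$ and $B'\subseteq\rho_{\beta}(\frakB)'$ this factors as $\big((X^{*}X')\rtensorh\Id\big)\big(\Id\rtensorh(Y^{*}Y')\big)$, a product of elements of $A'\rtensorh\Id$ and $\Id\rtensorh B'$, hence of $(\AfibreB)'$ by Lemma \ref{lemma:fp-commute}. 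Granting this, boundedness follows from a positivity estimate: for finitely many $Z_{i}$ and $\xi_{i}$, the matrix $P:=(Z_{i}^{*}Z_{j})_{i,j}$ equals $\mathbf{Z}^{*}\mathbf{Z}\geq0$ for the row $\mathbf{Z}=(Z_{1},\dots,Z_{n})$ and has entries in $(\AfibreB)'$, so $P$ commutes with $\Id_{n}\otimes T$ and $\Id_{n}\otimes T^{*}$. Writing $\Xi:=(\xi_{i})_{i}$ one finds $\|\sum_{i}Z_{i}T\xi_{i}\|^{2}=\langle\Xi|(\Id_{n}\otimes T)^{*}P(\Id_{n}\otimes T)\Xi\rangle=\langle\Xi|P(\Id_{n}\otimes T^{*}T)\Xi\rangle\leq\|T\|^{2}\langle\Xi|P\Xi\rangle=\|T\|^{2}\|\sum_{i}Z_{i}\xi_{i}\|^{2}$, using $\Id_{n}\otimes T^{*}T\leq\|T\|^{2}$ and $P\geq0$. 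Thus the prescription is well defined and extends to a bounded operator $(\phi\ast\psi)(T)$ with $\|(\phi\ast\psi)(T)\|\leq\|T\|$ and $(\phi\ast\psi)(T)(X\rtensorh Y)=(X\rtensorh Y)T$, i.e.\ \eqref{eq:fp-functorial}.

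Linearity and multiplicativity of $\phi\ast\psi$ are immediate on the dense range, and the relation $Z^{*}Z'\in(\AfibreB)'$ (so that $Z^{*}Z'$ commutes with $T$) shows $\langle(\phi\ast\psi)(T)Z\xi|Z'\xi'\rangle=\langle Z\xi|(\phi\ast\psi)(T^{*})Z'\xi'\rangle$, whence $(\phi\ast\psi)(T)^{*}=(\phi\ast\psi)(T^{*})$ and $\phi\ast\psi$ is a $*$-homomorphism. To place the image in $C\fibre{\gamma}{\frakH}{\delta}D=\ind{|\gamma\rangle_{\leg{1}}}{D}\cap\ind{|\delta\rangle_{\leg{2}}}{C}$, I would use the identity $(X\rtensorh Y)|\xi\rangle_{\leg{1}}=|X\xi\rangle_{\leg{1}}Y$ valid in case ii): combining it with $T\kalpha{1}\subseteq\lnspan\kalpha{1}B\rnspan$ and $Yb=\psi(b)Y$ gives $(\phi\ast\psi)(T)|X\xi\rangle_{\leg{1}}\in\lnspan|\gamma\rangle_{\leg{1}}D\rnspan$; since such $X\xi$ span $\gamma$ and the same applies to $T^{*}$, we get $(\phi\ast\psi)(T)\in\ind{|\gamma\rangle_{\leg{1}}}{D}$. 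The symmetric computation, using the case i) identity $(X\rtensorh Y)|\eta\rangle_{\leg{2}}=|Y\eta\rangle_{\leg{2}}X$, the relation $T\kbeta{2}\subseteq\lnspan\kbeta{2}A\rnspan$ and $\delta=\lnspan{\cal L}^{\psi}(K_{\beta},M_{\delta})\beta\rnspan$, yields $(\phi\ast\psi)(T)\in\ind{|\delta\rangle_{\leg{2}}}{C}$.

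The main obstacle I expect is the Key Lemma, and specifically the verification that $Z^{*}Z'=(X^{*}X')\rtensorh(Y^{*}Y')$ when $Z$ and $Z'$ are drawn from different cases: then one operator is realized in the $\tr$-picture and the other in the $\tl$-picture of \eqref{eq:rtp-space}, so composing them requires carefully tracking the identifications in \eqref{eq:rtp-space} on elementary tensors. Everything downstream of this composition formula is either the density argument for uniqueness and range, or the elementary positivity estimate above.
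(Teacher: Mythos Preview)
Your approach is correct up to the fiber-product membership step and is genuinely different from the paper's. The paper does \emph{not} construct $\phi\ast\psi$ directly via the intertwining prescription and a commutant/positivity estimate; instead it reduces to the two special cases $\phi\ast\Id_{B}$ and $\Id_{C}\ast\psi$ and composes them. Each special map is built module-theoretically: with $E:=A\,{_{\rho_{\alpha}}\tl}\,\beta$ one has $H\,{_{\Id}\tl}\,E\cong\HfibreK$ and $L\,{_{\phi}\tl}\,E\cong L\rtensor{\gamma}{\frakH}{\beta}K$, and $\phi\ast\Id_{B}$ is the restriction of $j_{L}\circ j_{H}^{-1}$, using Lemma~\ref{lemma:ind-iso} to identify $\AfibreB\subseteq\ind{\kbeta{2}}{A}$ with a subalgebra of ${\cal L}_{A}(E)$. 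This makes the $*$-homomorphism property automatic and, crucially, makes the fiber-product membership check clean because the ``other leg'' is always $\Id$.

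Your Key Lemma does hold, including the mixed case you flag: one checks directly that for $Z$ from case i) and $Z'$ from case ii), $Z^{*}Z'=\big((X^{*}X')\rtensorh\Id\big)\big(\Id\rtensorh(Y^{*}Y')\big)$ (both factors are defined, via cases (2) and (1) respectively), by computing inner products $\langle Z(h\tl\eta)\mid Z'(\xi\tr k)\rangle=\langle (X'\xi)^{*}Xh\mid(Y\eta)^{*}Y'k\rangle_{\frakH}$ on elementary tensors; Lemma~\ref{lemma:fp-commute} then gives $Z^{*}Z'\in(\AfibreB)'$. So uniqueness, boundedness, and the $*$-homomorphism property go through as you outline.

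The genuine gap is in your last paragraph. From $(X\rtensorh Y)|\xi\rangle_{\leg{1}}=|X\xi\rangle_{\leg{1}}Y$ and $T\kalpha{1}\subseteq\lnspan\kalpha{1}B\rnspan$ you obtain only $(\phi\ast\psi)(T)|X\xi\rangle_{\leg{1}}\,Y\in\lnspan|\gamma\rangle_{\leg{1}}D\,Y\rnspan$ for each individual $Y\in{\cal L}^{\psi}(K,M)$; the approximants from $\lnspan|\gamma\rangle_{\leg{1}}D\rnspan$ you produce a priori depend on $Y$, so you cannot simply ``cancel'' $Y$ to get $(\phi\ast\psi)(T)|X\xi\rangle_{\leg{1}}\in\lnspan|\gamma\rangle_{\leg{1}}D\rnspan$. (Density of $\bigcup_{Y}\Img Y$ in $M$ gives only a strong-operator, not a norm, approximation.) This is exactly what the paper's factorization avoids: in $\phi\ast\Id_{B}$ the second-leg intertwiner is $\Id$, so the relation reads $(\phi\ast\Id_{B})(T)|X\xi\rangle_{\leg{1}}=(X\rtensorh\Id)T|\xi\rangle_{\leg{1}}\in\lnspan|\gamma\rangle_{\leg{1}}B\rnspan$ with nothing to cancel, and then $\Id_{C}\ast\psi$ handles the other leg symmetrically. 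You can either import this factorization at the membership step, or close the gap by showing that the approximants $\sum_{j}|X\xi_{j}\rangle_{\leg{1}}\psi(b_{j})$ built from a fixed norm approximation $T|\xi\rangle_{\leg{1}}\approx\sum_{j}|\xi_{j}\rangle_{\leg{1}}b_{j}$ converge in norm to $(\phi\ast\psi)(T)|X\xi\rangle_{\leg{1}}$ as operators $M\to\LfibreM$; this requires an additional argument beyond what you have sketched.
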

\begin{proof}
  Uniqueness follows from the fact that elements of the form $(X \rtensorh
  Y)\omega$, where $\omega \in \HfibreK$ and $X$ and $Y$ are as above
  are linearly dense in $\LfibreM$ by
  condition ii) in Definition \ref{definition:fp-morphism}.

  To prove existence, we first consider the special case where
  $(K,B,\beta)=(M,\delta,D)$ and $\psi = \Id_{B}$, and construct $\phi
  \ast \Id_{B}$. Then, a similar argument applies to the special case
  where $(H,A,\alpha)=(L,\gamma,C)$ and $\phi = \Id$, and proves the
  existence of $\Id_{C} \ast \psi$. Finally, we can put $\phi \ast
  \psi:=(\Id_{C} \ast \psi)\circ (\phi \ast \Id_{B})$.

 Consider the internal tensor product $E:=A {_{\rho_{\alpha}}\tl}
 \beta$. This is a $C^{*}$-module over $A$, and we have
 isomorphisms of internal tensor products
 \begin{align*}
   H {_{\Id}\tl} E &\cong H {_{\rho_{\alpha}} \tl} \beta \cong \HfibreK, &
   L {_{\phi} \tl} E &\cong L {_{\rho_{\gamma}} \tl}  \beta
   \cong L \rtensor{\gamma}{\frakH}{\beta} K.
 \end{align*}
 Moreover, we have $*$-homomorphisms
 \begin{align*}
   j_{H} &\colon {\cal L}_{A}(E) \to {\cal L}(H {_{\Id}\tl} E) \cong
   {\cal L}(\HfibreK), &  T &\mapsto \Id_{H} \tl T \equiv j_{H}(T), \\
   j_{L} &\colon {\cal L}_{A}(E) \to {\cal L}(L {_{\phi}\tl} E) \cong
   {\cal L}(L \rtensor{\gamma}{\frakH}{\beta} K), & T &\mapsto \Id_{L}
   \tl T \equiv j_{L}(T).
 \end{align*}
 Since $j_{H}$ is injective and $\AfibreB \subseteq \ind{\kbeta{2}}{A}
 \subseteq j_{H}({\cal L}_{A}(E))$ (Lemma \ref{lemma:ind-iso}), we can
 define $\phi \ast \Id \colon \AfibreB \to {\cal L}(L 
 \rtensor{\gamma}{\frakH}{\beta} K)$ to be the restriction of $j_{L}
 \circ j_{H}^{-1}$.
 
 Let $T \in j_{H}^{-1}(\AfibreB)$ and $X
 \in {\cal L}^{\phi}(H,L)$. Then
 the following diagram commutes and shows that
 $(\phi \ast\Id)(j_{H}(T)) \cdot (X \rtensorh \Id)
  = (X \rtensorh \Id) \cdot j_{H}(T)$:
\begin{align*}
  \xymatrix@C=15pt{
    {\HfibreK} \ar@{=}[r]
    \ar[d]^{X \rtensorh \Id} &
     {H {_{\Id}\tl} E} \ar[d]^{X \tl \Id}
     \ar[rr]^{j_{H}(T)\equiv\Id \tl T} &{\qquad\quad}&
     {H {_{\Id}\tl} E} \ar@{=}[r] \ar[d]^{X \tl
       \Id} &
     {\HfibreK} \ar[d]^{X \rtensorh
       \Id} \\
    {L \rtensor{\gamma}{\frakH}{\beta} K} \ar@{=}[r] &
     {L {_{\phi}\tl} E} 
     \ar[rr]^{j_{L}(T)\equiv\Id \tl T} &{\qquad\quad}&
     {L {_{\phi} \tl} E} \ar@{=}[r]  &
     {L \rtensor{\gamma}{\frakH}{\beta} K.}
   }
 \end{align*}
 Moreover, for each $S \in \AfibreB$ and $Y \in B'$, we have $S (\Id
 \rtensorh Y) = (\Id \rtensorh Y) S$ by Lemma
 \ref{lemma:fp-commute}. Summarizing, we find that condition
 \eqref{eq:fp-functorial} holds.

 Finally, let us show that $(\phi \ast \Id)(\AfibreB) \subseteq C
 \fibre{\gamma}{\frakH}{\beta} B$. Let $T \in \AfibreB$. Using the
 relation $\gamma=\lnspan {\cal L}^{\phi}(H,L)\alpha\rnspan$, Equation
 \eqref{eq:fp-functorial}, and the relation $T\kalpha{1}\subseteq
 \lnspan|\kalpha{1}B\rnspan$, we find
 \begin{align*}
   (\phi \ast \Id)(T) |\gamma\rangle_{\leg{1}} &\subseteq \lnspan
   (\phi \ast \Id)(T) |{\cal L}^{\phi}(H,L)\alpha\rangle_{\leg{1}}
   \rnspan  \\
   &= \lnspan ({\cal L}^{\phi}(H,L) \tr
   \Id) T |\alpha\rangle_{\leg{1}} \rnspan \\
   &\subseteq \lnspan ({\cal L}^{\phi}(H,L) \tr \Id)
   |\alpha\rangle_{\leg{1}} B \rnspan = \lnspan
   |\gamma\rangle_{\leg{1}} B \rnspan.
 \end{align*}
 Another application of Equation \eqref{eq:fp-functorial}  shows that
 $(\phi \ast \Id)(T)\kbeta{2} = T\kbeta{2} \subseteq \lnspan
 \kbeta{2}A\rnspan$.
\end{proof}
\begin{corollary}
  Let $\cbasesb$, $(H,A,\alpha)$, $(L,\gamma,C)$, $\phi$ and
  $(K,B,\beta)$, $(M,\delta,D)$, $\psi$ be as in Proposition
  \ref{proposition:fp-functorial}, and let $T \in \AfibreB$. Then
  \begin{align*}
    \langle \eta|_{\leg{2}} (\phi \ast \Id)(T) |\eta'\rangle_{\leg{2}}
    &= \phi\big(\langle\eta|_{\leg{2}}T|\eta'\rangle_{\leg{2}}\big)
    \quad \text{for all } \eta,\eta' \in \beta,
  \\    \langle \xi|_{\leg{1}} (\Id \ast \psi)(T) |\xi'\rangle_{\leg{1}}
    &= \psi\big(\langle\xi|_{\leg{1}}T|\xi'\rangle_{\leg{1}}\big)
    \quad \text{for all } \xi,\xi' \in \alpha.
  \end{align*}
\end{corollary}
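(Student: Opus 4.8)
The plan is to establish the first identity; the second follows by the symmetric argument, interchanging the two legs together with the roles of $(\alpha,\phi)$ and $(\beta,\psi)$ and using the relation $M=[{\cal L}^{\psi}(K,M)K]$. Fix $\eta,\eta'\in\beta$ and write $T_{0}:=\langle\eta|_{\leg{2}}T|\eta'\rangle_{\leg{2}}$. First I would note that $T_{0}\in A$: since $T\in\AfibreB\subseteq\ind{\kbeta{2}}{A}$ we have $T|\eta'\rangle_{\leg{2}}\in[\kbeta{2}A]$, while $\langle\eta|_{\leg{2}}|\eta''\rangle_{\leg{2}}=\rho_{\alpha}(\langle\eta|\eta''\rangle)\in\rho_{\alpha}(\frakB^{\dagger})$ for every $\eta''\in\beta$, so $T_{0}\in[\rho_{\alpha}(\frakB^{\dagger})A]\subseteq A$ because $A$ is an $\alpha$-module. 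Thus both sides of the asserted equation lie in $C\subseteq{\cal L}(L)$, and the strategy is to show that they agree after right multiplication by an arbitrary $X\in{\cal L}^{\phi}(H,L)$; since $L=[{\cal L}^{\phi}(H,L)H]$ by Remark \ref{remark:fp-morphism-nondegenerate}, this forces equality.

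The technical core is to make each such $X$ interact with the relative tensor product. I would first check that every $X\in{\cal L}^{\phi}(H,L)$ satisfies
\begin{align*}
  X\rho_{\alpha}(b^{\dagger})=\rho_{\gamma}(b^{\dagger})X \qquad (b^{\dagger}\in\frakB^{\dagger}).
\end{align*}
This is the one genuinely non-formal step: taking adjoints in condition i) of Definition \ref{definition:fp-morphism} gives $\phi(\rho_{\alpha}(b^{\dagger})a)=\rho_{\gamma}(b^{\dagger})\phi(a)$ for all $a\in A$; evaluating on $a\zeta$ with $\zeta\in H$ and using nondegeneracy $[AH]=H$ then yields the claim. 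With this commutation in hand, $X\rtensorh\Id_{K}$ is defined through case ii) of the construction of $S\rtensorh T$ and equals $X\tl\Id_{\beta}\colon\HfibreK\to L\rtensor{\gamma}{\frakH}{\beta}K$. A direct check on elementary tensors then gives the two intertwining relations
\begin{align*}
  (X\rtensorh\Id)|\eta'\rangle_{\leg{2}}=|\eta'\rangle_{\leg{2}}X, \qquad
  \langle\eta|_{\leg{2}}(X\rtensorh\Id)=X\langle\eta|_{\leg{2}},
\end{align*}
where the slice maps are read on $\HfibreK$ on the inner side and on $L\rtensor{\gamma}{\frakH}{\beta}K$ on the outer side, and the second relation again uses the commutation above.

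Finally I would feed these into the defining property \eqref{eq:fp-functorial} of $\phi\ast\Id$, namely $(\phi\ast\Id)(T)(X\rtensorh\Id)=(X\rtensorh\Id)T$, to compute
\begin{align*}
  \langle\eta|_{\leg{2}}(\phi\ast\Id)(T)|\eta'\rangle_{\leg{2}}X
  &=\langle\eta|_{\leg{2}}(\phi\ast\Id)(T)(X\rtensorh\Id)|\eta'\rangle_{\leg{2}} \\
  &=\langle\eta|_{\leg{2}}(X\rtensorh\Id)T|\eta'\rangle_{\leg{2}}
  =X\,T_{0}=\phi(T_{0})X,
\end{align*}
where the last equality uses $T_{0}\in A$ and $X\in{\cal L}^{\phi}(H,L)$. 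Since this holds for every $X\in{\cal L}^{\phi}(H,L)$ and the vectors $X\zeta$ (with $X\in{\cal L}^{\phi}(H,L)$, $\zeta\in H$) are dense in $L$, I conclude $\langle\eta|_{\leg{2}}(\phi\ast\Id)(T)|\eta'\rangle_{\leg{2}}=\phi(T_{0})$, which is the first formula. I expect the main obstacle to be exactly the commutation relation above together with the bookkeeping of which copy of each slice map --- over $H$ or over $L$ --- is meant at each occurrence; once these are pinned down, everything else is routine manipulation of internal tensor products.
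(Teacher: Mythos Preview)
Your proof is correct and follows essentially the same route as the paper's: right-multiply by an intertwiner $X$, invoke \eqref{eq:fp-functorial}, and conclude by density. The only difference is that the paper works with $X\in{\cal L}^{\phi}(H_{\alpha},L_{\gamma})$, for which the commutation $X\rho_{\alpha}(b^{\dagger})=\rho_{\gamma}(b^{\dagger})X$ is automatic from \eqref{eq:rtp-op-commute}, whereas you take $X\in{\cal L}^{\phi}(H,L)$ and derive this commutation from condition i) of Definition~\ref{definition:fp-morphism} and nondegeneracy of $A$ --- a valid extra step that in fact reappears verbatim in the paper's proof of the Theorem immediately following this Corollary.
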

\begin{proof}
  The equations can be deduced from the proof of
  Proposition \ref{proposition:fp-functorial}, but we give an alternative proof.
  Let $\eta,\eta' \in \beta$ and $X \in {\cal
    L}^{\phi}(H_{\alpha},L_{\gamma})$. Then
  \begin{align*}
    \langle\eta|_{\leg{2}}(\phi \ast \Id)(T) |\eta'\rangle_{\leg{2}} X
    = \langle\eta|_{\leg{2}} (X \rtensorh \Id)T
    |\eta'\rangle_{\leg{2}} = X \langle\eta|_{\leg{2}} T
    |\eta'\rangle_{\leg{2}}
  \end{align*}
  by \eqref{eq:fp-functorial}, and inserting $Xa =
  \phi(a)X$ for $a:=\langle \eta|_{\leg{2}}T|\eta'\rangle_{\leg{2}}
  \in A$, we find
  \begin{align*}
    \langle\eta|_{\leg{2}}(\phi \ast \Id)(T) |\eta'\rangle_{\leg{2}} X
    = \phi\big(\langle\eta|_{\leg{2}}T|\eta'\rangle_{\leg{2}}\big) X.
  \end{align*}
  Since $X \in {\cal L}^{\phi}(H_{\alpha},L_{\gamma})$ was arbitrary
  and $\lnspan {\cal L}^{\phi}(H_{\alpha},L_{\gamma}) \alpha\rnspan =
  \gamma$, the first equation of the corollary
  follows. The  second one follows similarly.
\end{proof}
\begin{theorem}
  Let $\cbasesb$, $\cbasesc$ be  $C^{*}$-bases, let
  \begin{itemize}
  \item $\phi$ be a morphism of nondegenerate concrete
    $C^{*}$-$\cbasesb$-algebras $(H,A,\alpha)$ and $(L,\gamma,C)$,
  \item $\psi$ be a morphism of nondegenerate concrete
    $C^{*}$-$\cbaseosb$-algebras $(K,B,\beta)$ and $(M,\delta,D)$,
  \end{itemize}
  and assume that $\AfibreB \subseteq {\cal L}(\HfibreK)$ is
  nondegenerate. 
  \begin{enumerate}
  \item If $\alpha' \in \cfact(A_{\alpha};\cbasesc)$, $\gamma' \in
    \cfact(C_{\gamma};\cbasesc)$,  $\phi \in
    \Mor(A_{\alpha'}, C_{\gamma'})$, then
    $\phi \ast \psi \in \Mor \big((\AfibreB)_{(\alpha' \lt \beta)},
      (C \fibre{\gamma}{\frakH}{\delta} D)_{(\gamma' \lt \delta)}\big)$.
  \item  If $\beta' \in \cfact(B_{\beta};\cbasesc)$, $\delta' \in
    \cfact(D_{\delta};\cbasesc)$,  $\psi \in
    \Mor(B_{\beta'}, D_{\delta'})$, then
    $\phi \ast \psi \in \Mor \big((\AfibreB)_{(\alpha \rt \beta')},
      (C \fibre{\gamma}{\frakH}{\delta} D)_{(\gamma \rt \delta')}\big)$.
  \end{enumerate}
\end{theorem}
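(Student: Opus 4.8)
The plan is to verify directly that $\phi\ast\psi$ satisfies the two defining conditions of a morphism (Definition~\ref{definition:fp-morphism}) with respect to the $\cbasesc$-factorizations $\alpha'\lt\beta$ and $\gamma'\lt\delta$; I treat assertion~(1), since~(2) is the mirror image (one may even deduce it formally by conjugating with the symmetry $\Sigma$ of Proposition~\ref{proposition:rtp-symmetric}). First, Lemma~\ref{lemma:fp-stable} shows $\alpha'\lt\beta\in\cfact(\AfibreB;\cbasesc)$ and $\gamma'\lt\delta\in\cfact(C\fibre{\gamma}{\frakH}{\delta}D;\cbasesc)$, so source and target are concrete $C^{*}$-$\cbasesc$-algebras and the claim is meaningful. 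By Proposition~\ref{proposition:rtp-factorization-maps} the relevant representations are $\rho_{(\alpha'\lt\beta)}=(\rho_{\alpha'})_{\leg{1}}$ and $\rho_{(\gamma'\lt\delta)}=(\rho_{\gamma'})_{\leg{1}}$, so it remains to check (i) $(\phi\ast\psi)\big(T\rho_{(\alpha'\lt\beta)}(c^{\dagger})\big)=(\phi\ast\psi)(T)\rho_{(\gamma'\lt\delta)}(c^{\dagger})$ for $T\in\AfibreB$, $c^{\dagger}\in\frakC^{\dagger}$, and (ii) the fullness relation $\gamma'\lt\delta=\big[{\cal L}^{\phi\ast\psi}\big((\HfibreK)_{\alpha'\lt\beta},(\LfibreM)_{\gamma'\lt\delta}\big)(\alpha'\lt\beta)\big]$.

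The pivotal step is an intertwining lemma that couples the two $C^{*}$-bases: every $V\in{\cal L}^{\phi}(H,L)$ satisfies $V\rho_{\alpha'}(c^{\dagger})=\rho_{\gamma'}(c^{\dagger})V$, and likewise every $W\in{\cal L}^{\psi}(K,M)$ satisfies the analogous relation on the second base. Indeed, since $A$ is an $\alpha'$-module, $\rho_{\alpha'}(c^{\dagger})a\in A$, and condition~(i) of $\phi\in\Mor(A_{\alpha'},C_{\gamma'})$ (together with its adjoint form) gives $V\rho_{\alpha'}(c^{\dagger})a=\phi\big(\rho_{\alpha'}(c^{\dagger})a\big)V=\rho_{\gamma'}(c^{\dagger})\phi(a)V=\rho_{\gamma'}(c^{\dagger})Va$ for all $a\in A$; nondegeneracy of $A$ then yields the claim. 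Consequently, for $V\in{\cal L}^{\phi}(H_{\alpha},L_{\gamma})$ and $W\in{\cal L}^{\psi}(K_{\beta},M_{\delta})$ the operator $V\rtensorh W$ is defined, and by functoriality of the internal tensor product (Proposition~\ref{proposition:itp-operators}) together with $\rho_{\gamma'}(c^{\dagger})V=V\rho_{\alpha'}(c^{\dagger})$,
\[
\rho_{(\gamma'\lt\delta)}(c^{\dagger})\,(V\rtensorh W)=\big(\rho_{\gamma'}(c^{\dagger})V\big)\rtensorh W=\big(V\rho_{\alpha'}(c^{\dagger})\big)\rtensorh W=(V\rtensorh W)\,\rho_{(\alpha'\lt\beta)}(c^{\dagger}).
\]

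For (i), note first that $T\rho_{(\alpha'\lt\beta)}(c^{\dagger})\in\AfibreB$ because $\AfibreB$ is a self-adjoint $(\alpha'\lt\beta)$-module. I then sandwich both sides against $V\rtensorh W$ with $V\in{\cal L}^{\phi}(H_{\alpha},L_{\gamma})$, $W\in{\cal L}^{\psi}(K_{\beta},M_{\delta})$: applying~\eqref{eq:fp-functorial} twice and the identity just established,
\begin{align*}
  (\phi\ast\psi)\big(T\rho_{(\alpha'\lt\beta)}(c^{\dagger})\big)(V\rtensorh W)
  &=(V\rtensorh W)\,T\rho_{(\alpha'\lt\beta)}(c^{\dagger}) \\
  &=(\phi\ast\psi)(T)\,(V\rtensorh W)\,\rho_{(\alpha'\lt\beta)}(c^{\dagger}) \\
  &=(\phi\ast\psi)(T)\,\rho_{(\gamma'\lt\delta)}(c^{\dagger})\,(V\rtensorh W).
\end{align*}
As $V,W$ range over these sets, the vectors $(V\rtensorh W)\omega$ are dense in $\LfibreM$ --- this is exactly the density used in the proof of Proposition~\ref{proposition:fp-functorial}, stemming from $\gamma=[{\cal L}^{\phi}(H_{\alpha},L_{\gamma})\alpha]$ and $M=[{\cal L}^{\psi}(K_{\beta},M_{\delta})K]$ --- so the two operators coincide, proving (i). For (ii), take $V\in{\cal L}^{\phi}(H_{\alpha'},L_{\gamma'})\subseteq{\cal L}^{\phi}(H,L)$ and $W\in{\cal L}^{\psi}(K_{\beta},M_{\delta})$. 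Then $V\rtensorh W\in{\cal L}^{\phi\ast\psi}(\HfibreK,\LfibreM)$ by~\eqref{eq:fp-functorial}, and from $(V\rtensorh W)|\eta\rangle_{\leg{2}}=|W\eta\rangle_{\leg{2}}V$ one reads off $(V\rtensorh W)(\alpha'\lt\beta)=[|W\beta\rangle_{\leg{2}}V\alpha']\subseteq\gamma'\lt\delta$ and, after passing to adjoints, $(V\rtensorh W)^{*}(\gamma'\lt\delta)\subseteq\alpha'\lt\beta$; hence $V\rtensorh W\in{\cal L}^{\phi\ast\psi}\big((\HfibreK)_{\alpha'\lt\beta},(\LfibreM)_{\gamma'\lt\delta}\big)$. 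Letting $V,W$ vary and using $\gamma'=[{\cal L}^{\phi}(H_{\alpha'},L_{\gamma'})\alpha']$ and $\delta=[{\cal L}^{\psi}(K_{\beta},M_{\delta})\beta]$ gives $\gamma'\lt\delta=[|\delta\rangle_{\leg{2}}\gamma']=\big[\{V\rtensorh W\}(\alpha'\lt\beta)\big]$, which is squeezed between the two sides of the desired relation, establishing (ii).

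The main obstacle is the intertwining lemma of the second paragraph, which is what links the two a priori independent $C^{*}$-base structures. The tension is that condition~(i) can only be tested on operators dense enough to fill $\LfibreM$, and those are the $\cbasesb$-intertwiners $V\in{\cal L}^{\phi}(H_{\alpha},L_{\gamma})$, whereas the relation to be proved is phrased in terms of the $\cbasesc$-representations $\rho_{\alpha'},\rho_{\gamma'}$. The bridge --- that a plain $\phi$-intertwiner automatically respects $\rho_{\alpha'}$ and $\rho_{\gamma'}$ --- is not formal; it is precisely where condition~(i) of $\phi\in\Mor(A_{\alpha'},C_{\gamma'})$ and the nondegeneracy of $A$ are consumed, and it is the step that requires the most care. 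The remaining verifications are bookkeeping with the ket--bra formulas and the functoriality relation~\eqref{eq:fp-functorial}.
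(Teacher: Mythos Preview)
Your proof is correct and follows essentially the same route as the paper: both isolate the key intertwining step $V\rho_{\alpha'}(c^{\dagger})=\rho_{\gamma'}(c^{\dagger})V$ for $V\in{\cal L}^{\phi}(H,L)$ (derived from condition~(i) of $\phi\in\Mor(A_{\alpha'},C_{\gamma'})$ and nondegeneracy of $A$), then test condition~(i) of the morphism against $V\rtensorh W$ via~\eqref{eq:fp-functorial} and density, and finally verify condition~(ii) by taking $V\in{\cal L}^{\phi}(H_{\alpha'},L_{\gamma'})$, $W\in{\cal L}^{\psi}(K_{\beta},M_{\delta})$ and computing $(V\rtensorh W)(\alpha'\lt\beta)$. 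The only cosmetic difference is that you state the intertwining lemma up front as a standalone step, whereas the paper derives it inline during the verification of condition~(i).
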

\begin{proof}
  We only prove i); assertion ii) follows similarly. 

  By Lemma \ref{lemma:fp-stable}, $(\HfibreK,\AfibreB,\alpha' \lt
  \beta)$ is a concrete $C^{*}$-$\cbasesc$-algebra.

  We show that $\phi \ast \psi$ satisfies condition i) in Definition
  \ref{definition:fp-morphism}. Fix $T \in \AfibreB$ and $c^{\dagger}
  \in \frakC^{\dagger}$, and let $X \in {\cal
    L}^{\phi}(H_{\alpha},L_{\gamma})$  $Y \in {\cal
    L}^{\psi}(K_{\beta},M_{\delta})$. By \eqref{eq:fp-functorial},
  \begin{align} \label{eq:fp-functorial-equivariant}
    \begin{aligned}
      (\phi \ast \psi)(T\rho_{(\alpha' \lt \beta)}(c^{\dagger})) (X
      \rtensorh Y) &=
      (X \rtensorh Y) (T\rho_{(\alpha' \lt \beta)}(c^{\dagger})) \\
      &= (\phi \ast \psi)(T)(X \rtensorh Y) \rho_{(\alpha' \lt
        \beta)}(c^{\dagger}).
    \end{aligned}
  \end{align}
  For each $a \in A$, we have $\rho_{\alpha'}(c^{\dagger})a \in A$ and
  \begin{align*}
    X\rho_{\alpha'}(c^{\dagger})a =
    \phi\big(\rho_{\alpha'}(c^{\dagger})a\big)X =
    \rho_{\gamma'}(c^{\dagger})\phi(a)X= \rho_{\gamma'}(c^{\dagger})X
    a
  \end{align*}
  because $\phi \in \Mor(A_{\alpha'},C_{\gamma'})$ and $X \in {\cal
    L}^{\phi}(H_{\alpha},L_{\gamma})$.  Since $A \subseteq {\cal
    L}(H)$ is nondegenerate, we can conclude $
  X\rho_{\alpha'}(c^{\dagger})=\rho_{\gamma'}(c^{\dagger})X$. Inserting
  this equation and the relations $\rho_{(\alpha' \lt \beta)} =
  (\rho_{\alpha'})_{\leg{1}}$, $\rho_{(\gamma' \lt \delta)} =
  (\rho_{\gamma'})_{\leg{1}}$ into
  \eqref{eq:fp-functorial-equivariant}, we find
  \begin{align*}
      (\phi \ast \psi)(T\rho_{(\alpha' \lt \beta)}(c^{\dagger})) (X
      \rtensorh Y) &=
       (\phi \ast \psi)(T) \rho_{(\gamma' \lt
        \beta)}(c^{\dagger})(X \rtensorh Y).
  \end{align*}
  Since $X \in {\cal L}^{\phi}(H_{\alpha},L_{\gamma})$ and $Y\in {\cal
    L}^{\psi}(K_{\beta},M_{\delta})$ were arbitrary, we can conclude
  \begin{align*}
    (\phi \ast \psi)(T\rho_{(\alpha' \lt \beta)}(c^{\dagger})) =(\phi
    \ast \psi)(T) \rho_{(\gamma' \lt \beta)}(c^{\dagger}).
  \end{align*}
  
  To show that $\phi \ast \psi$ satisfies condition ii) in Definition
  \ref{definition:fp-morphism}.  Let $X \in {\cal
    L}^{\phi}(H_{\alpha'},L_{\gamma'})$ and $Y \in {\cal
    L}^{\psi}(K_{\beta},M_{\delta})$. Then a similar argument as above
  shows that $X\rho_{\alpha}(b^{\dagger})=\rho_{\gamma}(b^{\dagger})$
  for all $b^{\dagger} \in \frakB^{\dagger}$, so that $X \rtensorh Y
  \in {\cal L}(\HfibreK, \LfibreM)$ is well-defined. By Equation
  \eqref{eq:fp-functorial}, $X \rtensorh Y \in {\cal L}^{\phi \ast
    \psi}\big(\HfibreK, \LfibreM\big)$, and since
    \begin{align} \label{eq:fp-aux}
      (X \rtensorh Y)\kbeta{2}\alpha' = |Y\beta\rangle_{\leg{2}}
      (X\alpha') \subseteq |\delta\rangle_{\leg{2}} \gamma' \subseteq
      \gamma' \lt \delta,
    \end{align}
    $X \rtensorh Y \in {\cal L}^{\phi \ast
      \psi}\big((\HfibreK)_{\alpha' \lt \beta}, (L
    \rtensor{\gamma}{\frakH}{\delta} M)_{\gamma' \lt
      \delta}\big)$. This inclusion, equation \eqref{eq:fp-aux} and
    the relations $\lnspan {\cal
      L}^{\phi}(H_{\alpha'},L_{\gamma'})\alpha'\rnspan=\gamma'$ and
    $\lnspan {\cal L}^{\psi}(K_{\beta},M_{\delta})\beta\rnspan =
    \delta$ imply
  \begin{align*}
    \gamma' \lt \delta = \lnspan
    |\delta\rangle_{\leg{2}}\gamma'\rnspan &= \lnspan |{\cal
      L}^{\psi}(K_{\beta},M_{\delta}) \beta\rangle_{\leg{2}} {\cal
      L}^{\phi}(H_{\alpha'},L_{\gamma'}) \alpha'\rnspan \\ &\subseteq
    \lnspan {\cal L}^{\phi \ast \psi}\big((\HfibreK)_{\alpha' \lt
      \beta}, (\LfibreM)_{\gamma' \lt \delta}\big) (\alpha' \lt \beta)
    \rnspan. \qedhere
\end{align*}
\end{proof}

\begin{definition}
  A {\em concrete Hopf $C^{*}$-bimodule} is a tuple consisting of a
  $C^{*}$-base $\cbasesb$, a nondegenerate concrete
  $C^{*}$-$\cbasesb$-$\cbaseosb$-algebra $(H,A,\alpha,\beta)$, and a
  $*$-homomorphism $\Delta \colon A \to \AfibreA$ subject to the
  following conditions:
  \begin{enumerate}
  \item $\AfibreA \subseteq {\cal L}(H \rtensorab H)$ is nondegenerate,
  \item $\Delta \in \Mor\big(A_{\alpha}, (\AfibreA)_{\alpha \rt \alpha}\big)
    \cap \Mor\big(A_{\beta}, (\AfibreA)_{\beta \lt \beta}\big)$, and
  \item the following diagram commutes:
    \begin{align*}
      \xymatrix@C=10pt@R=10pt{
        A \ar[rrr]^{\Delta} \ar[ddd]^{\Delta} &&& {\AfibreA} \ar[dd]^{\Id
          \ast \Delta} \\ \\
        &&& {A \fibre{\alpha}{\frakH}{\beta \lt \beta} (\AfibreA)}
        \ar@{^(->}[d] \\
        {\AfibreA} \ar[rr]^(0.35){\Id \ast \Delta} &&
        {(\AfibreA) \fibre{\alpha \rt \alpha}{\frakH}{\alpha} A}
        \ar@{^(->}[r] &{\cal L}(H \fibreab H \fibreab H).
      }
    \end{align*}
  \end{enumerate}
\end{definition}
\begin{remark}
  If $(\cbasesb,H,\alpha,\beta,\Delta)$ is a concrete Hopf
  $C^{*}$-bimodule, then $(\Delta \ast \Id)(\Delta(A))$ and $(\Id\ast
  \Delta)(\Delta(A))$ are contained in $A \fibreab A \fibreab A$ (see
  Remark \ref{remark:fp-associative}), and for each $n \geq 2$, one
  can define a $*$-homomorphism
  \begin{align*}
    \Delta^{(n)} \colon A \to \underbrace{A \fibreab \cdots \fibreab
      A}_{(n+1) \text{ factors}}
  \end{align*}
  by iterated applications of $\Delta$, where the precise way in which
  the applications of $\Delta$ are iterated does not matter. Note that
  for each $n \geq 2$, all those possible ways correspond bijectively to
  all binary trees that have $n+1$ leaves.
\end{remark}
\section{$C^{*}$-pseudo-multiplicative unitaries}
\label{section:pmu}

In this section, we introduce $C^{*}$-pseudo-multiplicative unitaries
and explain their relation to several other generalizations of
multiplicative unitaries.  Following the treatment of multiplicative
unitaries given by Baaj and Skandalis \cite{baaj:2}, we define the legs
of such a $C^{*}$-pseudo-multiplicative unitary and show that under a
suitable regularity condition, these legs form concrete Hopf
$C^{*}$-bimodules.  Throughout, we apply the concepts and techniques
developed in the preceding sections.

\subsection{Definition}
\label{subsection:pmu-definition}

 Recall that a multiplicative unitary \cite[D\'efinition 1.1]{baaj:2}
 on a Hilbert space $H$ is a unitary $V \colon H \otimes H \to H
 \otimes H$ that satisfies the so-called pentagon equation
 $V_{12}V_{13}V_{23}=V_{23}V_{12}$.  Here, $V_{12},V_{13},V_{23}$ are
 operators on $H \otimes H \otimes H$, defined by $V_{12} = V \otimes
 \Id$, $V_{23} = \Id \otimes V$, $V_{13} = (\Sigma \otimes
 \Id)V_{23}(\Sigma \otimes \Id) = (\Id \otimes \Sigma) V_{12}(\Id
 \otimes \Sigma)$, where $\Sigma \in {\cal B}(H \otimes H)$ denotes
 the flip $\eta \otimes \xi \mapsto \xi \otimes \eta$.  We extend this
 concept, replacing the ordinary tensor product of Hilbert spaces by
 the $C^{*}$-relative tensor product as follows.

 Let $H$ be a Hilbert space, $\cbasesb$ a $C^{*}$-base, and
 \begin{align*}
  \alpha &\in \cfact(H;\cbasesb), & \hbeta &\in \cfact(H;\cbaseosb), & \beta
  &\in \cfact(H;\cbaseosb)
\end{align*}
pairwise compatible $C^{*}$-factorizations.
\begin{lemma} \label{lemma:pmu-pentagon}
  Let $V \in {\cal L}\big(\Hsource, \Hrange\big)$ and assume that
  \begin{gather} \label{eq:pmu-intertwine}
    \begin{aligned}
      V_{*}(\alpha \lt \alpha) &= \alpha \rt \alpha, &
      V_{*}(\hbeta \rt \beta) &= \hbeta \lt \beta, \\
      V_{*}(\hbeta \rt \hbeta) &= \alpha \rt \hbeta, & V_{*}(\beta \lt
      \alpha) &= \beta \lt \beta.
    \end{aligned}
  \end{gather}
  Then all operators in the following diagram are well-defined,
  \begin{gather} \label{eq:pmu-pentagon}
    \begin{gathered}
      \xymatrix@C=-20pt@R=10pt{ && {\hspace{-5pt} \Htwo \hspace{-5pt}}
        \ar[rrd]-<10pt,-8pt>^(0.7){ \Id \rtensorh V}
        \ar@{<-}[lld]-<-10pt,-8pt>_(0.7){ V \rtensorh \Id}
        && \\
        {\Hone \hspace{-5ex}} \ar[rdd]^(0.6){ \Id \rtensorh V} &&&&
        {\hspace{-5ex} \Hthree,}
        \\
        \\
        &{\Hfive} \ar[dd]^(0.5){ \Id\rtensorh \Sigma} & & {\Hfour}
        \ar[ruu]^(0.4){ V \rtensorh \Id}
        &   \\ \\
        &{\Hfourlt} \ar@<1.5pt>[rr]^{ V \rtensorh \Id} & & {\Hfourrt}
        \ar[uu]^(0.5){  \Sigma_{\leg{23}}} & }
    \end{gathered}
  \end{gather}
where $\Sigma_{\leg{23}}$ denotes the isomorphism
\begin{align*}
  \Hfourrt \cong (H {_{\rho_{\alpha}} \tl} \beta)  {_{\rho_{\hbeta \lt
      \beta}} \tl} \alpha &\mycong (H {_{\rho_{\hbeta}} \tl} \alpha)
{_{\rho_{\alpha \lt \alpha}} \tl} \beta \cong \Hfour, \\
(\zeta \tl \xi) \tl \eta &\mapsto
  (\zeta \tl \eta) \tl \xi.
\end{align*}
\end{lemma}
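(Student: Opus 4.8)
The plan is to verify well-definedness of each arrow separately and to reduce every case to the functoriality of the $C^{*}$-relative tensor product described in Subsection~\ref{subsection:rtp-definition}, supplemented by the associativity and symmetry isomorphisms of Propositions~\ref{proposition:rtp-associative} and~\ref{proposition:rtp-symmetric}. The organizing observation is that each arrow labelled $V \rtensorh \Id$ or $\Id \rtensorh V$ applies $V$ to two adjacent legs and the identity to the third; after using Proposition~\ref{proposition:rtp-associative} to rebracket the triple relative tensor product so that those two legs are grouped, the arrow becomes an instance of the functorial operator $S \rtensorh T$ with $T = \Id_{H}$ (first case of Subsection~\ref{subsection:rtp-definition}) or $S = \Id_{H}$ (second case). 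Such an operator exists precisely when $V$ carries the factorization connecting the grouped pair of legs to the third leg onto the corresponding factorization of the target; the intertwining condition for the untouched leg is automatic, since there the operator is the identity and the relevant factorization is unchanged.

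First I would record, via Proposition~\ref{proposition:rtp-associative}, the two bracketings of each triple product. Grouping the first two legs gives
\begin{align*}
  \Hone &\cong (\Hsource) \rtensor{\hbeta \rt \hbeta}{\frakH}{\alpha} H, &
  \Htwo &\cong (\Hrange) \rtensor{\alpha \rt \hbeta}{\frakH}{\alpha} H, \\
  \Hthree &\cong (\Hrange) \rtensor{\alpha \rt \alpha}{\frakH}{\beta} H, &
  \Hfourlt &\cong (\Hsource) \rtensor{\hbeta \rt \beta}{\frakH}{\alpha} H,
\end{align*}
while $\Hfour$ and $\Hfourrt$ are already written in this form; grouping the last two legs gives
\begin{align*}
  \Hone &\cong H \rtensor{\hbeta}{\frakH}{\alpha \lt \alpha} (\Hsource), &
  \Htwo &\cong H \rtensor{\alpha}{\frakH}{\beta \lt \alpha} (\Hsource), \\
  \Hthree &\cong H \rtensor{\alpha}{\frakH}{\beta \lt \beta} (\Hrange), &&
\end{align*}
while $\Hfive$ is already in this form. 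Reading off the connecting factorizations, each arrow is governed by exactly one line of \eqref{eq:pmu-intertwine}: the arrows $\Id \rtensorh V \colon \Hone \to \Hfive$ and $V \rtensorh \Id \colon \Hfour \to \Hthree$ use $V_{*}(\alpha \lt \alpha) = \alpha \rt \alpha$; the arrow $V \rtensorh \Id \colon \Hfourlt \to \Hfourrt$ uses $V_{*}(\hbeta \rt \beta) = \hbeta \lt \beta$; the arrow $V \rtensorh \Id \colon \Hone \to \Htwo$ uses $V_{*}(\hbeta \rt \hbeta) = \alpha \rt \hbeta$; and the arrow $\Id \rtensorh V \colon \Htwo \to \Hthree$ uses $V_{*}(\beta \lt \alpha) = \beta \lt \beta$. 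In each case, since $V$ is unitary, the equality of factorizations $V_{*}(\mu) = \nu$ yields both $V\mu \subseteq \nu$ and $V^{*}\nu = \mu$, so that $V$ lies in the operator space demanded by the relevant case of Subsection~\ref{subsection:rtp-definition}.

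The two remaining arrows impose no condition on $V$. For $\Id \rtensorh \Sigma \colon \Hfive \to \Hfourlt$ I would invoke the second case of the functorial construction with $T = \Sigma$, whose factorization behaviour $\Sigma_{*}(\alpha \rt \alpha) = \alpha \lt \alpha$ is furnished by Proposition~\ref{proposition:rtp-symmetric}; together with the last-two-legs bracketing of $\Hfourlt$ this identifies the target. The operator $\Sigma_{\leg{23}}$ is the composite of canonical isomorphisms exhibited in the statement, each an instance of the associativity and symmetry isomorphisms, and is therefore a well-defined unitary; alternatively one checks directly that its defining formula $(\zeta \tl \xi) \tl \eta \mapsto (\zeta \tl \eta) \tl \xi$ preserves inner products.

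The only real difficulty is \emph{bookkeeping}: correctly tracking, through the rebracketings of Proposition~\ref{proposition:rtp-associative}, which factorization labels the connection between the grouped pair of legs and the third leg, and confirming that it is the one named in \eqref{eq:pmu-intertwine}. Once the bracketings above are pinned down, matching each arrow to its defining condition and invoking the functoriality is routine.
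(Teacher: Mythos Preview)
Your proposal is correct and follows exactly the approach taken in the paper: invoke the functoriality of the $C^{*}$-relative tensor product together with the associativity and symmetry isomorphisms, and check that each arrow matches one of the intertwining relations in \eqref{eq:pmu-intertwine}. The paper's proof is simply a terser version of yours --- it records two representative cases (the arrow $V \rtensorh \Id \colon \Hone \to \Htwo$ via $V_{*}(\hbeta \rt \hbeta) = \alpha \rt \hbeta$, and the arrow $\Id \rtensorh \Sigma$) and leaves the remaining bookkeeping to the reader, whereas you have carried it out in full.
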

\begin{proof}
  This follows easily from the functoriality of the relative tensor
  product \ref{proposition:rtp-functorial}; for example, we have operators
  \begin{gather*}
    H \sfsource H \sfsource H \cong \big(\sHsource\big)
    \htensor{\hbeta \rt \hbeta}{\alpha} H \xrightarrow{V \rtensorh \Id}
    \big(\sHrange\big) \htensor{\alpha \rt \hbeta}{\alpha} H \cong H
    \sfrange H \sfsource H, \\
    H \htensor{\hbeta}{\alpha \rt \alpha} \big(\sHrange\big)
    \xrightarrow{\Id \rtensorh \Sigma} H \htensor{\hbeta}{\alpha \lt
      \alpha} \big(H \htensor{\beta}{\alpha} H\big) \cong H \sfsource
    H \htensor{\beta}{\alpha} H. \qedhere
  \end{gather*}
\end{proof}

\begin{definition}  \label{definition:pmu}
  Let $H$ be a Hilbert space, $\cbasesb$ a $C^{*}$-base, and $\alpha
  \in \cfact(H;\cbasesb)$, $\hbeta,\beta \in \cfact(H;\cbaseosb)$
  pairwise compatible.  A unitary $V\in {\cal L}(\Hsource,\Hrange)$ is
  {\em $C^{*}$-pseudo-multiplicative} if Equations
  \eqref{eq:pmu-intertwine} hold and Diagram \eqref{eq:pmu-pentagon}
  commutes.
\end{definition}
\begin{remark} \label{remark:pmu-opposite}
  If $V \in {\cal L}(\Hsource,\Hrange)$ is a
  $C^{*}$-pseudo-multiplicative unitary, then also
  \begin{align*}
    V^{op} :=\Sigma V^{*} \Sigma \colon H \htensor{\beta}{\alpha} H
    \xrightarrow{\Sigma} \Hrange \xrightarrow{V^{*}} \Hsource
    \xrightarrow{\Sigma} H \htensor{\alpha}{\hbeta} H
  \end{align*}
  is a $C^{*}$-pseudo-multiplicative unitary; here, the r\^oles of
  $\hbeta$ and $\beta$ get reversed.
\end{remark}
This definition subsumes the following special cases:
\begin{enumerate}
\item The representations $\rho_{\hbeta}$ and $\rho_{\beta}$ restrict
  to representations $\rho_{\hbeta}\colon \frakB \to {\cal L}_{\frakB}(\alpha)$
  and $\rho_{\beta}\colon \frakB \to {\cal L}_{\frakB}(\alpha)$, and the unitary
  $V$ restricts to a unitary operator on right $C^{*}$-modules
  \begin{align} \label{eq:pmu-cpmu} 
   V_{\alpha} \colon \alpha {_{\rho_{\hbeta}} \tl} \alpha \to \alpha
  \tr_{\rho_{\beta}} \alpha
\end{align}
which is a pseudo-multiplicative unitary on $C^{*}$-modules in the
sense of Timmermann \cite{timmermann}.
\item If $\hbeta=\alpha$, then $\frakB$ is commutative by Remark
  \ref{remark:rtp-commute}, and the restriction $V_{\alpha}$ is a
  pseudo-multiplicative unitary in the sense of O'uchi \cite{ouchi}.
  Similarly, if $\beta=\alpha$, then $V_{\alpha}^{op}$ is a
  pseudo-multiplicative unitary in the sense of \cite{ouchi}.
\item If $\alpha=\hbeta=\beta$, then again $\frakB$ is commutative, and
  $V_{\alpha}$ is a continuous field of multiplicative unitaries in
  the sense of Blanchard \cite{blanchard}.
\item Assume that $\cbasesb$ is the $C^{*}$-base associated to a
  proper KMS-weight $\mu$ on a $C^{*}$-algebra $B$, that is, $\frakH =
  H_{\mu}$ and $\frakB=\pi_{\mu}(B)$,
  $\frakB^{\dagger}=\pi_{\mu^{op}}(B^{op})$ (see Example
  \ref{example:rtp-base-weight}).  Put $N:=\frakB'' \subset {\cal
    L}(H_{\mu})$, denote by $\tilde \mu$ the extension of $\mu$ to a
  normal semifinite faithful weight on $N$, and denote by $\tilde
  \rho_{\alpha}$, $\tilde \rho_{\hbeta}$, and $\tilde \rho_{\beta}$
  the unique normal extensions of $\rho_{\alpha}$, $\rho_{\hbeta}$,
  and $\rho_{\beta}$, respectively, to $N$ or $N^{op}$ (see Subsection
  \ref{subsection:rtp-comparison}).  Then we have isomorphisms
  \begin{align*}
    \Hsource &\cong H
    \vtensor{\tilde\rho_{\hbeta}}{\tilde\mu}{\tilde\rho_{\alpha}} H, &
    \Hrange &\cong H
    \vtensor{\tilde\rho_{\beta}}{\tilde\mu^{op}}{\tilde\rho_{\alpha}}
    H,
  \end{align*}
  and with respect to these isomorphisms, $V$ is a
  pseudo-multiplicative unitary on Hilbert spaces in the sense of
  Vallin \cite{vallin:2}.
\end{enumerate}

\subsection{The legs of a $C^{*}$-pseudo-multiplicative unitary}
\label{subsection:pmu-legs}

To every multiplicative unitary $V$, Baaj and Skandalis associate two
algebras $\widehat{A}(V)$, $A(V)$ and two normal $*$-homomorphisms
$\hDelta_{V}, \Delta_{V}$ such that, under favorable circumstances
like regularity, $(\widehat{A}(V),\hDelta_{V})$ and
$(A(V),\Delta_{V})$ are Hopf $C^{*}$-algebras \cite{baaj:2}. 

Their construction carries over to $C^{*}$-pseudo-multiplicative
unitaries as follows.
Let $V \in {\cal L}(\Hsource,\Hrange)$ a pseudo-multiplicative
unitary, where $H$, $\cbasesb$, and $\alpha,\hbeta,\beta$ are as in
Definition \ref{definition:pmu}.

\paragraph{The algebras $\hA(V)$ and $A(V)$}
We define spaces $\hA(V) \subseteq {\cal L}(H)$ and $A(V) \subseteq
{\cal L}(H)$, using the spaces of ket-bra operators $\kalpha{2},
\khbeta{1} \subseteq {\cal L}(H, \Hsource)$ and $\bbeta{2},\balpha{1}
\subseteq {\cal L}(\Hrange, H)$ introduced in Subsection
\ref{subsection:rtp-definition}, as follows:
\begin{align*}
\hA:=  \hA(V) &:= \lnspan \bbeta{2} V \kalpha{2}\rnspan \subseteq {\cal L}(H),
  &
A:=  A(V) &:= \lnspan\Gamma^{*} V \khbeta{1}\rnspan \subseteq {\cal L}(H).
\end{align*}

The definition of $\hA(V)$ and $A(V)$ is symmetric in the following sense:
\begin{lemma} \label{lemma:pmu-legs-symmetry}
 $\hA(V^{op})=A(V)^{*}$ and  $A(V^{op})=\hA(V)^{*}$.
\end{lemma}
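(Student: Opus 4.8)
The plan is to unwind both definitions using the relation $V^{op}=\Sigma V^{*}\Sigma$ together with the way the flip $\Sigma$ exchanges the two legs of the relative tensor product. The key technical observation, which I would record first, concerns the action of $\Sigma$ on the ket-operators: for a relative tensor product $\alpha \tr \frakH \tl \beta$ the defining formulas $|\xi\rangle_{\leg{1}}\zeta = \xi \tr \zeta$, $|\eta\rangle_{\leg{2}}\zeta = \zeta \tl \eta$ together with $\Sigma(\xi \tr \zeta \tl \eta)=\eta \tr \zeta \tl \xi$ give $\Sigma|\xi\rangle_{\leg{1}}=|\xi\rangle_{\leg{2}}$ and $\Sigma|\eta\rangle_{\leg{2}}=|\eta\rangle_{\leg{1}}$, where on the right-hand sides the kets are now formed in the flipped space $\beta \tr \frakH \tl \alpha$. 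Passing to adjoints yields the matching relations $\langle\xi|_{\leg{1}}\Sigma^{*}=\langle\xi|_{\leg{2}}$ and $\langle\eta|_{\leg{2}}\Sigma^{*}=\langle\eta|_{\leg{1}}$ for the bra-operators.

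Next I would fix the relabelling of factorizations. By Remark \ref{remark:pmu-opposite} the unitary $V^{op}$ is $C^{*}$-pseudo-multiplicative for the data in which $\alpha$ is unchanged while $\hbeta$ and $\beta$ are interchanged; thus its source is $H\htensor{\beta}{\alpha}H$ and its range is $H\htensor{\alpha}{\hbeta}H$, and the definition of $\hA$ applied to $V^{op}$ reads $\hA(V^{op})=\lnspan \bhbeta{2}V^{op}\kalpha{2}\rnspan$, with $\kalpha{2}\colon H \to H\htensor{\beta}{\alpha}H$ and $\bhbeta{2}\colon H\htensor{\alpha}{\hbeta}H \to H$ formed on these spaces. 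Writing $V^{op}=\Sigma_{2}V^{*}\Sigma_{1}$ with $\Sigma_{1}\colon H\htensor{\beta}{\alpha}H \to \Hrange$ and $\Sigma_{2}\colon \Hsource \to H\htensor{\alpha}{\hbeta}H$ the two flips, I would substitute this factorization into the span.

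The substitution collapses the flips by the relations above. The leg-$2$ ket on the source of $V^{op}$ is carried by $\Sigma_{1}$ to the leg-$1$ ket on $\Hrange$, i.e.\ $\Sigma_{1}\kalpha{2}=\kalpha{1}$; dually the leg-$2$ bra on the range of $V^{op}$ satisfies $\bhbeta{2}\Sigma_{2}=\bhbeta{1}$, the leg-$1$ bra on $\Hsource$. Hence $\hA(V^{op})=\lnspan \bhbeta{2}\Sigma_{2}V^{*}\Sigma_{1}\kalpha{2}\rnspan=\lnspan \bhbeta{1}V^{*}\kalpha{1}\rnspan$. Since $(\balpha{1}V\khbeta{1})^{*}=\khbeta{1}^{*}V^{*}\balpha{1}^{*}=\bhbeta{1}V^{*}\kalpha{1}$, this last span is exactly $A(V)^{*}$, which proves $\hA(V^{op})=A(V)^{*}$.

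For the second identity I would avoid repeating the computation and instead apply the first identity to $V^{op}$ in place of $V$, using that $(V^{op})^{op}=V$; the latter is a straightforward consequence of the unitarity of the two flips. This yields $\hA(V)=A(V^{op})^{*}$, hence $A(V^{op})=\hA(V)^{*}$. The argument is conceptually transparent, and the only real care required---the place where index errors are easiest to make---is the bookkeeping of which factorization occupies which leg after the relabelling $\hbeta\leftrightarrow\beta$ and after each application of $\Sigma$.
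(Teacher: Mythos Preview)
Your proof is correct and follows essentially the same approach as the paper's: both unwind $V^{op}=\Sigma V^{*}\Sigma$, use that $\Sigma$ swaps leg-$1$ and leg-$2$ kets/bras, and recognise the resulting span as the adjoint of $A(V)$. The only minor difference is that for the second identity the paper simply says ``similarly'' whereas you invoke $(V^{op})^{op}=V$ and apply the first identity; both are fine, and your explicit recording of the $\Sigma$-action on the ket-operators makes the bookkeeping clearer than in the paper.
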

\begin{proof}
  Switching from $V$ to $V^{op}$, the r\^oles of $\beta$ and $\hbeta$
  get reversed, so
  \begin{align*}
    \hA(V^{op}) = \lnspan \bhbeta{2} \Sigma V^{*} \Sigma \kalpha{2}\rnspan =
    \lnspan \bhbeta{1} V^{*}\kalpha{1}\rnspan = \lnspan
   \Gamma^{*}V\khbeta{1}\rnspan^{*} = A(V)^{*},
  \end{align*}
  and similarly $A(V^{op}))=\hA(V)^{*}$.
\end{proof}
The spaces $\hA(V)$ and $A(V)$  are related to the representations
$\rho_{\alpha},\rho_{\hbeta},\rho_{\beta}$ as follows:
\begin{lemma} \label{lemma:pmu-legs-modules}
  We have 
  \begin{gather*}
    \lnspan \hA \rho_{\hbeta}(B)\rnspan = \lnspan \rho_{\hbeta}(B)
    \hA\rnspan = \lnspan \hA \rho_{\alpha}(B^{op})\rnspan = \lnspan
    \rho_{\alpha}(B^{op}) \hA\rnspan = \hA \subseteq {\cal
      L}(H_{\beta}) \shortintertext{and} \lnspan A
    \rho_{\beta}(B)\rnspan = \lnspan \rho_{\beta}(B) A\rnspan =
    \lnspan A \rho_{\alpha}(B^{op})\rnspan = \lnspan
    \rho_{\alpha}(B^{op}) A\rnspan = A \subseteq {\cal L}(H_{\hbeta}).
  \end{gather*}
\end{lemma}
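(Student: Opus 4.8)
The plan is to prove all assertions for $\hA$ and to deduce those for $A$ by symmetry. Indeed, Lemma~\ref{lemma:pmu-legs-symmetry} gives $A(V)=\hA(V^{op})^{*}$, and passing from $V$ to $V^{op}$ interchanges the roles of $\hbeta$ and $\beta$ (Remark~\ref{remark:pmu-opposite}); since taking adjoints exchanges left and right multiplication and preserves ${\cal L}(H_{\hbeta})$, the four equalities and the inclusion for $A$ follow from the corresponding statements for $\hA$ with $\hbeta$ and $\beta$ swapped. So I focus on $\hA=\lnspan\bbeta{2}V\kalpha{2}\rnspan$. The general idea is to absorb each representation into one of the ket-bra families $\kalpha{2}$ or $\bbeta{2}$, passing through $V$ only when unavoidable. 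Two of the four equalities need no interaction with $V$: from the module relation in the flipped internal tensor product one reads off $|\xi\rangle_{\leg{2}}\rho_{\hbeta}(b)=|\xi b\rangle_{\leg{2}}$ for $\xi\in\alpha$, $b\in\frakB$, so that $\lnspan\kalpha{2}\rho_{\hbeta}(\frakB)\rnspan=\kalpha{2}$ (using $\lnspan\alpha\frakB\rnspan=\alpha$) and hence $\lnspan\hA\rho_{\hbeta}(\frakB)\rnspan=\hA$; dually, $\rho_{\alpha}(b^{\dagger})\langle\eta|_{\leg{2}}=\langle\eta b^{\dagger*}|_{\leg{2}}$ for $\eta\in\beta$, $b^{\dagger}\in\frakB^{\dagger}$ gives $\lnspan\rho_{\alpha}(\frakB^{\dagger})\bbeta{2}\rnspan=\bbeta{2}$ and thus $\lnspan\rho_{\alpha}(\frakB^{\dagger})\hA\rnspan=\hA$.

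The remaining two equalities must be routed through $V$, and this is where the intertwining relations \eqref{eq:pmu-intertwine} enter. Pairwise compatibility together with Remark~\ref{remark:rtp-commute} ensures that $\rho_{\alpha}(\frakB^{\dagger})$, $\rho_{\hbeta}(\frakB)$ and $\rho_{\beta}(\frakB)$ preserve the relevant factorizations and commute, so that all the leg-wise liftings below exist. For the right $\rho_{\alpha}(\frakB^{\dagger})$-action I would first commute it onto the first leg, $\kalpha{2}\rho_{\alpha}(b^{\dagger})=\rho_{\alpha}(b^{\dagger})_{\leg{1}}\kalpha{2}$ on $\Hsource$, then push it through $V$ using $V_{*}(\alpha\lt\alpha)=\alpha\rt\alpha$, which by \eqref{eq:rtp-op-commute} and Proposition~\ref{proposition:rtp-factorization-maps} reads $V\rho_{\alpha}(b^{\dagger})_{\leg{1}}=\rho_{\alpha}(b^{\dagger})_{\leg{2}}V$, and finally absorb the result into $\bbeta{2}$ via $\langle\eta|_{\leg{2}}\rho_{\alpha}(b^{\dagger})_{\leg{2}}=\langle\rho_{\alpha}(b^{\dagger*})\eta|_{\leg{2}}$ and $\lnspan\rho_{\alpha}(\frakB^{\dagger})\beta\rnspan=\beta$. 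Symmetrically, the left $\rho_{\hbeta}(\frakB)$-action is moved onto the first leg of $\Hrange$ as $\rho_{\hbeta}(b)\bbeta{2}=\bbeta{2}\rho_{\hbeta}(b)_{\leg{1}}$, pushed through $V$ by $V_{*}(\hbeta\rt\beta)=\hbeta\lt\beta$ (giving $\rho_{\hbeta}(b)_{\leg{1}}V=V\rho_{\beta}(b)_{\leg{2}}$), and absorbed into $\kalpha{2}$ via $\rho_{\beta}(b)_{\leg{2}}|\xi\rangle_{\leg{2}}=|\rho_{\beta}(b)\xi\rangle_{\leg{2}}$ and $\lnspan\rho_{\beta}(\frakB)\alpha\rnspan=\alpha$. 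This yields $\lnspan\hA\rho_{\alpha}(\frakB^{\dagger})\rnspan=\hA$ and $\lnspan\rho_{\hbeta}(\frakB)\hA\rnspan=\hA$.

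For the inclusion $\hA\subseteq{\cal L}(H_{\beta})$ I would argue at the level of factorizations. Since $\lnspan\kalpha{2}\beta\rnspan=\beta\lt\alpha$, we get $\lnspan V\kalpha{2}\beta\rnspan=V_{*}(\beta\lt\alpha)=\beta\lt\beta$ by \eqref{eq:pmu-intertwine}, and then $\lnspan\bbeta{2}(\beta\lt\beta)\rnspan=\lnspan\bbeta{2}\kbeta{2}\beta\rnspan=\lnspan\rho_{\alpha}(\frakB^{\dagger})\beta\rnspan=\beta$; hence $\hA\beta\subseteq\beta$. For the adjoint, $\hA^{*}=\lnspan\balpha{2}V^{*}\kbeta{2}\rnspan$ and $\lnspan\kbeta{2}\beta\rnspan=\beta\lt\beta$; from $V_{*}(\beta\lt\alpha)=\beta\lt\beta$ we obtain $\lnspan V^{*}\kbeta{2}\beta\rnspan=\beta\lt\alpha$, and $\lnspan\balpha{2}(\beta\lt\alpha)\rnspan=\lnspan\balpha{2}\kalpha{2}\beta\rnspan=\lnspan\rho_{\hbeta}(\frakB)\beta\rnspan=\beta$ (using $\hbeta\perp\beta$), so $\hA^{*}\beta\subseteq\beta$ and $\hA\subseteq{\cal L}(H_{\beta})$.

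I expect the main obstacle to be the bookkeeping in the second paragraph: for each of the four identities in \eqref{eq:pmu-intertwine} one must identify, via Proposition~\ref{proposition:rtp-factorization-maps}, exactly which representation of $\frakB$ or $\frakB^{\dagger}$ and which leg it encodes, and one must check that the compatibility hypotheses really do make every lifting $\rho_{(\cdot)}(\cdot)_{\leg{i}}$ well-defined with the claimed leg-wise action. Once that dictionary between factorization identities and intertwining relations is fixed, each equality reduces to a short chain of ket-bra computations.
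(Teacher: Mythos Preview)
Your proposal is correct and follows essentially the same route as the paper. The four equalities are established by the same ket--bra manipulations: two absorb directly into $\kalpha{2}$ or $\bbeta{2}$ via $\lnspan\alpha\frakB\rnspan=\alpha$ and $\lnspan\beta\frakB^{\dagger}\rnspan=\beta$, and the other two are pushed through $V$ using precisely the intertwining relations $V_{*}(\alpha\lt\alpha)=\alpha\rt\alpha$ and $V_{*}(\hbeta\rt\beta)=\hbeta\lt\beta$ that you identify, landing on $\lnspan\rho_{\alpha}(\frakB^{\dagger})\beta\rnspan=\beta$ and $\lnspan\rho_{\beta}(\frakB)\alpha\rnspan=\alpha$; this is exactly the paper's computation. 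Your appeal to $V^{op}$ for the $A$-statements is a clean way to formalize what the paper leaves as ``similarly''. The only genuine (and harmless) difference is the inclusion $\hA\subseteq{\cal L}(H_{\beta})$: the paper factors it as a composition $\kalpha{2}\in{\cal L}(H_{\beta},(\Hsource)_{\beta\lt\alpha})$, $V\in{\cal L}((\Hsource)_{\beta\lt\alpha},(\Hrange)_{\beta\lt\beta})$, $\bbeta{2}\in{\cal L}((\Hrange)_{\beta\lt\beta},H_{\beta})$, whereas you verify $\hA\beta\subseteq\beta$ and $\hA^{*}\beta\subseteq\beta$ directly --- the same information, just unpacked one level further.
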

\begin{proof}
  We only prove the assertions concerning $\hA$: Using  Equation
  \eqref{eq:pmu-intertwine} and the relations $\lnspan \alpha B\rnspan =
  \alpha = \lnspan \rho_{\beta}(B)\alpha\rnspan$ and $\lnspan
  \rho_{\alpha}(B^{op})\beta\rnspan = \beta = \lnspan \beta
  B^{op}\rnspan$, we find
  \begin{align*}
    \lnspan \bbeta{2} V \kalpha{2} \rho_{\hbeta}(B)\rnspan &=
    \lnspan \bbeta{2} V |\alpha B\rangle_{2} \rnspan = \hA, \\
    \lnspan \rho_{\hbeta}(B) \bbeta{2} V \kalpha{2} \rnspan &=
    \lnspan \bbeta{2} \rho_{\hbeta \lt \beta}(B) V \kalpha{2} \rnspan \\
    &= \lnspan \bbeta{2} V \rho_{\hbeta \rt \beta}(B) \kalpha{2}
    \rnspan = \lnspan \bbeta{2} V
    |\rho_{\beta}(B)\alpha\rangle_{2}\rnspan =
    \hA, \\
    \lnspan \bbeta{2} V \kalpha{2} \rho_{\alpha}(B^{op}) \rnspan &=
    \lnspan \bbeta{2} V \rho_{\alpha\lt \alpha}(B^{op}) \kalpha{2}
    \rnspan \\ &= \lnspan \bbeta{2} \rho_{\alpha\rt \alpha}(B^{op}) V
    \kalpha{2} \rnspan = \lnspan \langle
    \rho_{\alpha}(B^{op})\beta|_{\leg{2}} V \kalpha{2} \rnspan = \hA,
    \\
    \lnspan \rho_{\alpha}(B^{op})  \bbeta{2} V \kalpha{2}
    \rnspan&= \lnspan \langle\beta B^{op}|_{\leg{2}} V \kalpha{2} = \hA.
  \end{align*}
  The inclusion $\hA \subseteq {\cal L}(H_{\beta})$ follows from the
  inclusions $\kalpha{2} \subseteq {\cal L}\big(H_{\beta},
  (\Hsource)_{\beta \lt \alpha}\big)$, $ V \in {\cal
    L}\big((\Hsource)_{\beta \lt \alpha}, (\Hrange)_{\beta \lt
    \beta}\big)$, and $ \bbeta{2} \subseteq {\cal
    L}\big((\Hrange)_{\beta \lt \beta}, H_{\beta} \big)$.
\end{proof}

The spaces $\hA$ and $A$ are algebras:
\begin{proposition} \label{proposition:pmu-legs-algebras}
  $\lnspan \hA \hA\rnspan = \hA$ and $\lnspan A A \rnspan = A$.
\end{proposition}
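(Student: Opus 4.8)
The plan is to follow the argument of Baaj and Skandalis \cite{baaj:2}: express a product of two generators of $\hA$ as a \emph{single} generator whose slicing vectors have been modified by an application of $V$, using the pentagon equation. First I reduce the two assertions to one. By Remark~\ref{remark:pmu-opposite}, $V^{op}$ is again a $C^{*}$-pseudo-multiplicative unitary, and by Lemma~\ref{lemma:pmu-legs-symmetry} we have $A(V) = \hA(V^{op})^{*}$. Since the adjoint is a conjugate-linear isometry commuting with the closed linear span, and $\big(\hA(V^{op})\hA(V^{op})\big)^{*} = A(V)A(V)$ (the set of products is reversed but unchanged), applying the equation $\lnspan \hA(W)\hA(W)\rnspan = \hA(W)$ to $W:=V^{op}$ and taking adjoints gives $\lnspan A(V)A(V)\rnspan = \lnspan \hA(V^{op})\hA(V^{op})\rnspan^{*} = \hA(V^{op})^{*} = A(V)$. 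Hence it suffices to prove $\lnspan \hA\hA\rnspan = \hA$.

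For the inclusion $\lnspan \hA\hA\rnspan \subseteq \hA$, I take two generators $a=\langle\eta_{1}|_{\leg{2}}V|\xi_{1}\rangle_{\leg{2}}$ and $b=\langle\eta_{2}|_{\leg{2}}V|\xi_{2}\rangle_{\leg{2}}$ with $\eta_{1},\eta_{2}\in\beta$ and $\xi_{1},\xi_{2}\in\alpha$, and realize $ab$ on the threefold relative tensor product $\Hone$, which exists because $\alpha,\hbeta,\beta$ are pairwise compatible (Proposition~\ref{proposition:rtp-associative}). Slicing the first factor over the second leg and the second factor over a third leg exhibits $ab$ as a slice, over the second and third legs, of the product $V_{\leg{12}}V_{\leg{13}}$, where $V_{\leg{12}}=V\rtensorh\Id$ and $V_{\leg{13}}$ is the copy of $V$ on the first and third legs built with the flip $\Sigma_{\leg{23}}$ of Lemma~\ref{lemma:pmu-pentagon}. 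The pentagon equation encoded in Diagram~\eqref{eq:pmu-pentagon} then lets me replace $V_{\leg{12}}V_{\leg{13}}$ by $V_{\leg{23}}V_{\leg{12}}V_{\leg{23}}^{*}$, where $V_{\leg{23}}=\Id\rtensorh V$ acts only on the second and third legs.

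Substituting this and slicing over the second and third legs, the two copies of $V_{\leg{23}}$ get absorbed into the slicing data: the bra $\langle\eta_{1}|_{\leg{2}}$ paired with $\eta_{2}$ is replaced by a bra of the form $\langle V^{*}(\,\cdot\,)|$, the ket $|\xi_{1}\rangle_{\leg{2}}$ paired with $\xi_{2}$ by a ket $|V^{*}(\,\cdot\,)\rangle$, and the remaining leg contracts into $\frakB$. What survives is a single copy of $V_{\leg{12}}$ sliced over the second leg, i.e.\ an element of $\lnspan \bbeta{2}V\kalpha{2}\rnspan=\hA$ --- \emph{provided} the modified ket again lies in $\alpha$ and the modified bra in $\beta$. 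This is exactly what the intertwining relations \eqref{eq:pmu-intertwine} supply: they assert that $V$ (hence $V^{*}$) carries the fiber products $\alpha\lt\alpha$, $\hbeta\rt\beta$, $\hbeta\rt\hbeta$, $\beta\lt\alpha$ onto $\alpha\rt\alpha$, $\hbeta\lt\beta$, $\alpha\rt\hbeta$, $\beta\lt\beta$, which both guarantees that all the operators above are well defined (cf.\ Propositions~\ref{proposition:rtp-functorial} and \ref{proposition:rtp-associative}) and returns the modified vectors into $\alpha$ and $\beta$. Thus $ab\in\hA$. Because the relations \eqref{eq:pmu-intertwine} are \emph{equalities} rather than mere inclusions, the modified kets and bras range over dense subsets of $\alpha$ and $\beta$ as the data varies; combined with the nondegeneracy and module relations of Lemma~\ref{lemma:pmu-legs-modules}, this yields the reverse inclusion $\hA\subseteq\lnspan \hA\hA\rnspan$, whence $\lnspan \hA\hA\rnspan=\hA$.

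The main obstacle will be the bookkeeping of the $C^{*}$-factorizations on the threefold relative tensor product: one must track which of $\alpha,\hbeta,\beta$ labels each leg before and after the pentagon is applied, check at each stage that $V_{\leg{12}}$, $V_{\leg{23}}$ and the flip $\Sigma_{\leg{23}}$ are defined on the correct fiber products, and verify that contracting the absorbed $V_{\leg{23}}$ against $\xi_{2}$ and $\eta_{2}$ really returns vectors in $\alpha$ and $\beta$. All of this is controlled by \eqref{eq:pmu-intertwine}, so the crux is to chase the pentagon diagram with these factorizations attached, not to carry out any analytic estimate.
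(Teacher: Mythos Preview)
Your proposal is correct and takes essentially the same approach as the paper: both rewrite $V_{\leg{12}}V_{\leg{13}}$ as $V_{\leg{23}}V_{\leg{12}}V_{\leg{23}}^{*}$ via the pentagon, absorb the outer $V_{\leg{23}}^{\pm 1}$ into the ket/bra data using the intertwining equalities \eqref{eq:pmu-intertwine}, and then contract the extra leg to $\rho_{\hbeta\rt\hbeta}(\frakB)$, which is absorbed by Lemma~\ref{lemma:pmu-legs-modules}. The paper merely packages these steps as two commutative diagrams (first obtaining $\lnspan\hA\hA\rnspan=\lnspan\bbeta{2}\balpha{3}V_{\leg{12}}\kalpha{3}\kalpha{2}\rnspan$, then collapsing the third leg), whereas you describe them verbally; your reduction of the second equation to the first via $V^{op}$ and Lemma~\ref{lemma:pmu-legs-symmetry} is exactly the symmetry the paper leaves implicit.
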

\begin{proof}
  We only prove the first equation. The following diagram commutes and
  shows that $\lnspan \hA \hA\rnspan = \lnspan \bbeta{2}\balpha{3}
  \Vl{12}\kalpha{3}\kalpha{2}\rnspan$:
  \begin{gather*} \hspace{-0.25cm} \smalldiagram
    \xymatrix@C=-3pt@R=25pt{ & H \ar[d]^{\kalpha{2}} \ar[rr]^{ \hA}
      \ar `l/0pt[l] [ld]^{\kalpha{2}} \ar@{}[rd]|{\scriptstyle \qquad \ (D)} & & H
      \ar@{}[dd]|(0.4){\scriptstyle(C)} \ar[rd]^(0.65){\kalpha{2}}
      \ar[rr]^{\hA} & \ar@{}[rd]|{\scriptstyle (D) \qquad \ } & H  &  \\
      {\sHsource} \ar `d[dd]^(0.75){\kalpha{3}} [ddr] & {\sHsource}
      \ar[r]^(0.55){V} \ar[d]^{\kalpha{2}} & {\sHrange}
      \ar[ru]^(0.35){\bbeta{2}} \ar[rd]^{ \kalpha{2}} & & {\sHsource}
      \ar[r]^(0.45){V} & {\sHrange} \ar[u]^{\bbeta{2}} & {\sHrange}
      \ar `u[u]^{\bbeta{2}} [ul] \\
      & {\Hfive} \ar[rr]^{\Vl{13}} \ar[d]^{\Vl{23}^{*}} && {\Hfour}
      \ar[ru]^{\bbeta{3}} \ar[rr]^{\Vl{12}}
      \ar@{}[d]|(0.4){\scriptstyle (P)} &&
      {\Hthree}
      \ar[u]^{\bbeta{3}}   & \\
      & {\Hone} \ar[rrrr]^{ \Vl{12}} & &&& {\Htwo} \ar[u]^{\Vl{23}}
      \ar `r/0pt[r] [ruu]^(0.25){\balpha{3}} & }
      \end{gather*}
      Indeed, the cells labeled by (D) commute by definition, cell
      (C) commutes because
      \begin{gather} \label{eq:pmu-diagram-commutes}
          |\xi\rangle_{\leg{2}} \langle \eta'|_{\leg{2}} (\zeta \tl
          \eta) = \rho_{\alpha}\big(\eta'{}^{*}\eta\big)\zeta \tl \xi
           = \rho_{\alpha \lt \alpha}\big(\eta'{}^{*}\eta\big) (\zeta
          \tl \xi) = \langle \eta'|_{\leg{3}} |\xi\rangle_{\leg{2}}
          (\zeta \tl \eta)
      \end{gather} 
      for all $\xi \in \alpha$, $\eta,\eta'\in \beta$, $\zeta \in H$,
      cell (P) is just Diagram \eqref{eq:pmu-pentagon}, and the
      remaining cells commute because of
      \eqref{eq:pmu-intertwine}. 

       The following commutative diagram   shows that $\lnspan
      \bbeta{2}\balpha{3}\Vl{12}\kalpha{3}\kalpha{2}\rnspan = \hA$ and
      completes the proof:
       \begin{gather*}  \smalldiagram
        \xymatrix@C=40pt@R=25pt{ & H \ar `l/0pt[l]
          [ld]^{{\kalpha{2}}} \ar[r]^{\hA}
          \ar[d]^{{\kalpha{2}}} & H \\
          {\sHsource} \ar `d/0pt[d]^{\kalpha{3}}
          [dr] \ar[r]^(0.55){\rho_{\hbeta \rt \hbeta}(B)} &
          {\sHsource} \ar[r]^{
            V} & {\sHrange} \ar[u]^{\bbeta{2}} \\
          & {\Hone} \ar[u]^{
            \balpha{3}} \ar[r]^{\Vl{12}} & {\Htwo} \ar[u]^{\balpha{3}} } 
      \end{gather*}
\end{proof}

The algebras $\hA$ and $A$ are nondegenerate in the following strong
sense:
\begin{proposition}
  $\lnspan \hA \beta\rnspan = \beta = \lnspan \hA^{*} \beta\rnspan$
  and $\lnspan A \hbeta\rnspan = \hbeta = \lnspan A^{*}\hbeta\rnspan$.
\end{proposition}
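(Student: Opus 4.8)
The plan is to prove the four equalities by direct computation, reducing each one to a compatibility relation between the factorizations $\alpha,\hbeta,\beta$. I will carry out the case of $\hA$ and $\beta$ in full; the statements for $A$ and $\hbeta$ then follow by applying these to the opposite unitary $V^{op}$, since passing to $V^{op}$ interchanges the roles of $\hbeta$ and $\beta$ and, by Lemma~\ref{lemma:pmu-legs-symmetry}, satisfies $\hA(V^{op})=A(V)^{*}$ and $A(V^{op})=\hA(V)^{*}$. Thus $\lnspan A\hbeta\rnspan=\hbeta=\lnspan A^{*}\hbeta\rnspan$ is literally $\lnspan\hA(V^{op})^{*}\hbeta\rnspan=\hbeta=\lnspan\hA(V^{op})\hbeta\rnspan$, which is the first pair read for $V^{op}$.

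First I would note $\lnspan\hA\beta\rnspan=\lnspan\bbeta{2}V\kalpha{2}\beta\rnspan$ and track the image of $\beta$ through the three factors. Applying Proposition~\ref{proposition:rtp-factorization-maps} inside the source $\Hsource=H\rtensor{\hbeta}{\frakH}{\alpha}H$ gives $\lnspan\kalpha{2}\beta\rnspan=\beta\lt\alpha$ (legitimate since $\beta\perp\hbeta$). The fourth intertwining relation of Equation~\eqref{eq:pmu-intertwine}, namely $V_{*}(\beta\lt\alpha)=\beta\lt\beta$, then gives $\lnspan V\kalpha{2}\beta\rnspan=\beta\lt\beta=\lnspan\kbeta{2}\beta\rnspan$ inside the range $\Hrange=H\rtensor{\alpha}{\frakH}{\beta}H$. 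Finally, using $\bbeta{2}\kbeta{2}=\rho_{\alpha}(\lnspan\beta^{*}\beta\rnspan)=\rho_{\alpha}(\frakB^{\dagger})$, I obtain
\begin{align*}
  \lnspan\hA\beta\rnspan=\lnspan\bbeta{2}\kbeta{2}\beta\rnspan=\lnspan\rho_{\alpha}(\frakB^{\dagger})\beta\rnspan=\beta,
\end{align*}
where the last step is exactly the compatibility $\alpha\perp\beta$ of Definition~\ref{definition:rtp-compatible}.

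For the adjoint statement I would use $\hA^{*}=\lnspan\balpha{2}V^{*}\kbeta{2}\rnspan$ and run the same computation backwards: $\lnspan\kbeta{2}\beta\rnspan=\beta\lt\beta$, then $V^{*}$ carries $\beta\lt\beta$ to $\beta\lt\alpha=\lnspan\kalpha{2}\beta\rnspan$ (inverting the same relation of~\eqref{eq:pmu-intertwine}), and finally $\balpha{2}\kalpha{2}=\rho_{\hbeta}(\lnspan\alpha^{*}\alpha\rnspan)=\rho_{\hbeta}(\frakB)$ yields $\lnspan\hA^{*}\beta\rnspan=\lnspan\rho_{\hbeta}(\frakB)\beta\rnspan=\beta$, now by the compatibility $\hbeta\perp\beta$.

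The only real care required is bookkeeping: keeping straight which leg each ket and bra refers to and over which $C^{*}$-base each factorization lives, so that $\bbeta{2}\kbeta{2}$ produces $\rho_{\alpha}(\frakB^{\dagger})$ while $\balpha{2}\kalpha{2}$ produces $\rho_{\hbeta}(\frakB)$. There is no genuine analytic obstacle: the inclusions $\lnspan\hA\beta\rnspan\subseteq\beta$ and $\lnspan\hA^{*}\beta\rnspan\subseteq\beta$ are already guaranteed by $\hA\subseteq{\cal L}(H_{\beta})$ from Lemma~\ref{lemma:pmu-legs-modules}, so the displayed computation merely pins down the span precisely.
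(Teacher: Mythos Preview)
Your argument is correct and follows essentially the same route as the paper: both compute $\lnspan\hA\beta\rnspan=\lnspan\bbeta{2}V\kalpha{2}\beta\rnspan$, invoke $V_{*}(\beta\lt\alpha)=\beta\lt\beta$, and conclude via $\lnspan\bbeta{2}\kbeta{2}\rnspan=\rho_{\alpha}(\frakB^{\dagger})$ together with $\alpha\perp\beta$. The only cosmetic difference is that the paper dispatches the remaining three equalities with ``the others follow similarly'' (i.e.\ by parallel direct computation), whereas you deduce the $A$-statements from the $\hA$-statements for $V^{op}$ via Lemma~\ref{lemma:pmu-legs-symmetry}; both are equally valid.
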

\begin{proof}
  We only prove the first equation, the others follow similarly: Since
  $V_{*}(\beta \lt \alpha) = \beta \lt \beta$, 
  \begin{gather*}
    \lnspan \hA \beta\rnspan = \lnspan \bbeta{2} V
    \kalpha{2}\beta\rnspan = \lnspan \bbeta{2} \kbeta{2}\beta\rnspan =
    \lnspan \rho_{\alpha}(B^{op})\beta\rnspan =\beta. \qedhere
  \end{gather*}
\end{proof}
\begin{corollary}
  $\lnspan \hA H\rnspan = H = \lnspan \hA^{*} H\rnspan$ and $\lnspan A H
  \rnspan = H = \lnspan A^{*}H\rnspan$. \qed
\end{corollary}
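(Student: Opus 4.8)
The plan is to reduce all four identities to the preceding Proposition by exploiting the fact that $\beta$ and $\hbeta$ are $C^{*}$-factorizations of $H$. Recall that by Definition \ref{definition:rtp-factorization}, any $C^{*}$-factorization $\gamma$ of $H$ satisfies $\lnspan \gamma \frakH \rnspan = H$; in particular $\lnspan \beta \frakH \rnspan = H = \lnspan \hbeta \frakH \rnspan$. This is the only structural input needed beyond the Proposition.

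First I would treat $\hA$. Substituting $H = \lnspan \beta \frakH \rnspan$ and merging the nested closed linear spans (using the standard fact that $\lnspan X \lnspan Y \rnspan \rnspan = \lnspan XY \rnspan$ for sets of operators whose products are defined), I compute
\begin{align*}
  \lnspan \hA H \rnspan = \lnspan \hA \beta \frakH \rnspan = \lnspan
  \lnspan \hA \beta \rnspan \frakH \rnspan = \lnspan \beta \frakH
  \rnspan = H,
\end{align*}
where the third equality uses $\lnspan \hA \beta \rnspan = \beta$ from the Proposition. The identity $\lnspan \hA^{*} H \rnspan = H$ follows by the same computation with $\lnspan \hA^{*} \beta \rnspan = \beta$ in place of $\lnspan \hA \beta \rnspan = \beta$.

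Next I would handle $A$ in exactly the same way, now invoking that $\hbeta$ is a $C^{*}$-factorization, so that $\lnspan \hbeta \frakH \rnspan = H$, together with the identities $\lnspan A \hbeta \rnspan = \hbeta$ and $\lnspan A^{*} \hbeta \rnspan = \hbeta$ supplied by the Proposition. This yields $\lnspan A H \rnspan = \lnspan A \hbeta \frakH \rnspan = \lnspan \hbeta \frakH \rnspan = H$ and likewise $\lnspan A^{*} H \rnspan = H$.

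There is no genuine obstacle here: the corollary is an immediate consequence of the preceding Proposition, and the only point that merits any care is the harmless manipulation of nested closed linear spans. Accordingly, I would simply record the four one-line computations and invoke the factorization property of $\beta$ and $\hbeta$.
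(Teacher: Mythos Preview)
Your argument is exactly the intended one: the paper marks this corollary with \qed\ because it follows immediately from the preceding Proposition via $\lnspan \hA H\rnspan = \lnspan \hA \beta\frakH\rnspan = \lnspan \beta\frakH\rnspan = H$ (and analogously for the other three identities), using that $\beta$ and $\hbeta$ are $C^{*}$-factorizations. There is nothing to add.
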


\paragraph{The comultiplications $\hDelta_{V}$ and $\Delta_{V}$}
We define maps
\begin{gather*}
  \widehat{\Delta}= \widehat{\Delta}_{V} \colon \rho_{\beta}(\frakB)' \to
  {\cal L}\big(\Hsource\big), \quad y \mapsto V^{*}(1 \rtensorh
  y)V \shortintertext{and} \Delta = \Delta_{V} \colon
  \rho_{\hbeta}(\frakB)' \to {\cal L}\big(\Hrange), \quad z
  \mapsto V(z \rtensorh 1)V^{*}.
\end{gather*}
\begin{proposition} \label{proposition:pmu-delta}
 We have
  \begin{align*}
    \hDelta \in \Mor\Big(\rho_{\beta}(\frakB)'_{\alpha}, {\cal
      L}(\Hsource)_{\alpha \lt \alpha}\Big) \cap
    \Mor\Big(\rho_{\beta}(\frakB)'_{\hbeta}, {\cal
      L}(\Hsource)_{\hbeta \rt \hbeta}\Big) \shortintertext{and}
    \Delta \in \Mor\Big(\rho_{\hbeta}(\frakB)'_{\beta}, {\cal
      L}(\Hrange)_{\beta \lt \beta}\Big) \cap
    \Mor\Big(\rho_{\hbeta}(\frakB)'_{\beta}, {\cal
      L}(\Hrange)_{\beta \rt \beta}\Big).
  \end{align*}
\end{proposition}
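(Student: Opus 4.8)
The plan is to verify, for each of the four asserted memberships, the two defining conditions of a morphism in Definition \ref{definition:fp-morphism}. I shall carry this out for $\hDelta$; the corresponding assertions for $\Delta$ are proved by the symmetric argument, or equivalently are deduced from the $\hDelta$-case by applying it to the opposite unitary $V^{op}$ of Remark \ref{remark:pmu-opposite} (which interchanges $\hbeta$ and $\beta$) and conjugating by the symmetry isomorphism $\Sigma$ of Proposition \ref{proposition:rtp-symmetric}: since $\Sigma$ carries $\gamma\lt\beta$ and $\alpha\rt\delta$ to $\beta\rt\gamma$ and $\delta\lt\alpha$, it transports the morphism property. As a preliminary I note that the source algebras really are concrete $C^{*}$-algebras for the indicated factorizations: as $\alpha,\hbeta,\beta$ are pairwise compatible, Remark \ref{remark:rtp-commute} gives $\rho_{\alpha}(\frakB^{\dagger}),\rho_{\hbeta}(\frakB)\subseteq\rho_{\beta}(\frakB)'$, so that $\rho_{\beta}(\frakB)'$ is stable under the representation attached to $\alpha$ and to $\hbeta$.

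For condition i) the essential point is that \eqref{eq:pmu-intertwine} says exactly that $V$ maps each source factorization of $\Hsource$ onto the matching factorization of $\Hrange$; for the first membership this reads $V\in{\cal L}\big((\Hsource)_{\alpha\lt\alpha},(\Hrange)_{\alpha\rt\alpha}\big)$. Hence \eqref{eq:rtp-op-commute} yields $V\rho_{(\alpha\lt\alpha)}(b^{\dagger})=\rho_{(\alpha\rt\alpha)}(b^{\dagger})V$ for $b^{\dagger}\in\frakB^{\dagger}$, and Proposition \ref{proposition:rtp-factorization-maps} identifies $\rho_{(\alpha\lt\alpha)}(b^{\dagger})=(\rho_{\alpha})_{\leg{1}}(b^{\dagger})$ together with $\rho_{(\alpha\rt\alpha)}(b^{\dagger})=(\rho_{\alpha})_{\leg{2}}(b^{\dagger})=\Id\rtensorh\rho_{\alpha}(b^{\dagger})$. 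Substituting into $\hDelta(a)=V^{*}(\Id\rtensorh a)V$ and using functoriality of the internal tensor product (Proposition \ref{proposition:itp-operators}) to merge $(\Id\rtensorh a)(\Id\rtensorh\rho_{\alpha}(b^{\dagger}))=\Id\rtensorh a\rho_{\alpha}(b^{\dagger})$, I obtain $\hDelta(a)\rho_{(\alpha\lt\alpha)}(b^{\dagger})=\hDelta(a\rho_{\alpha}(b^{\dagger}))$, which is condition i). The second membership is the same computation with $\hbeta$, $\rt$ and $V_{*}(\hbeta\rt\hbeta)=\alpha\rt\hbeta$ replacing $\alpha$, $\lt$ and $V_{*}(\alpha\lt\alpha)=\alpha\rt\alpha$.

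For condition ii) the plan is to realize the required intertwiners from a single leg of $V$, namely $V^{*}\kalpha{1}$, i.e.\ the operators $V^{*}|\xi\rangle_{\leg{1}}\colon H\to\Hsource$ with $\xi\in\alpha$ (note $\kalpha{1}\subseteq{\cal L}(H,\Hrange)$). Since $(\Id\rtensorh a)|\xi\rangle_{\leg{1}}=|\xi\rangle_{\leg{1}}a$ for every $a\in\rho_{\beta}(\frakB)'$, I get $\hDelta(a)\,V^{*}|\xi\rangle_{\leg{1}}=V^{*}(\Id\rtensorh a)|\xi\rangle_{\leg{1}}=V^{*}|\xi\rangle_{\leg{1}}\,a$, so $V^{*}|\xi\rangle_{\leg{1}}$ intertwines $\hDelta(a)$ and $a$. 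Reading $V_{*}(\alpha\lt\alpha)=\alpha\rt\alpha$ backwards gives $V^{*}(\alpha\rt\alpha)=\alpha\lt\alpha$, whence $V^{*}|\xi\rangle_{\leg{1}}\alpha\subseteq\alpha\lt\alpha$, while the adjoint condition $\langle\xi|_{\leg{1}}V(\alpha\lt\alpha)=\langle\xi|_{\leg{1}}(\alpha\rt\alpha)\subseteq\alpha$ follows from $\balpha{1}\kalpha{1}\subseteq\rho_{\beta}(\frakB)$ and the compatibility relation $\lnspan\rho_{\beta}(\frakB)\alpha\rnspan=\alpha$. Thus $V^{*}|\xi\rangle_{\leg{1}}\in{\cal L}^{\hDelta}\big(H_{\alpha},(\Hsource)_{\alpha\lt\alpha}\big)$, and $\lnspan V^{*}\kalpha{1}\alpha\rnspan=V^{*}(\alpha\rt\alpha)=\alpha\lt\alpha$ gives condition ii), the reverse inclusion being forced by the domain constraint. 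For the second membership the same operators are tested against $\hbeta$, using $V_{*}(\hbeta\rt\hbeta)=\alpha\rt\hbeta$ and $\lnspan\rho_{\beta}(\frakB)\hbeta\rnspan=\hbeta$.

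The conceptual content is that a single leg $\kalpha{1}$ of $V$, once pushed through $V^{*}$, already realizes the target factorization, because $V$ is unitary and \eqref{eq:pmu-intertwine} pins down the image of every factorization under $V$; this is what makes both conditions fall out mechanically. No single hard estimate is involved, and the one genuine difficulty is bookkeeping: keeping straight which $C^{*}$-base ($\cbasesb$ or $\cbaseosb$) and which representation ($\rho_{\alpha}\colon\frakB^{\dagger}\to{\cal L}(H)$ against $\rho_{\hbeta},\rho_{\beta}\colon\frakB\to{\cal L}(H)$) governs each factorization, selecting the correct one of the four relations in \eqref{eq:pmu-intertwine} at each step, and checking that the explicit intertwiners land in the intended spaces ${\cal L}\big(H_{\alpha},(\Hsource)_{\alpha\lt\alpha}\big)$ and ${\cal L}\big(H_{\hbeta},(\Hsource)_{\hbeta\rt\hbeta}\big)$.
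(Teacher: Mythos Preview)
Your proof is correct and follows essentially the same route as the paper: both verify condition~i) via the intertwining relations \eqref{eq:pmu-intertwine} (the paper phrases this as $\hDelta(\rho_{\alpha}(b^{\dagger}))=\rho_{(\alpha\lt\alpha)}(b^{\dagger})$ and then uses multiplicativity, which amounts to your computation), and both establish condition~ii) by exhibiting the operators $V^{*}|\xi\rangle_{\leg{1}}$ as the required intertwiners and reading off $\alpha\lt\alpha=V^{*}[\kalpha{1}\alpha]$. Your write-up is in fact more careful than the paper's in verifying explicitly that $V^{*}|\xi\rangle_{\leg{1}}\in{\cal L}\big(H_{\alpha},(\Hsource)_{\alpha\lt\alpha}\big)$ and in justifying that $\rho_{\beta}(\frakB)'$ is an $\alpha$- and $\hbeta$-module.
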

\begin{proof}
  We only prove the assertion concerning $\hDelta$.
  The intertwining relations \eqref{eq:pmu-intertwine} immediately
  imply that for each $b \in \frakB$ and $b^{\dagger} \in
  \frakB^{\dagger}$,
  \begin{align*}
    \hDelta(\rho_{\alpha}(b^{\dagger})) &= \rho_{\alpha \lt
      \alpha}(b^{\dagger}), &
    \hDelta(\rho_{\hbeta})(b) &= \rho_{\hbeta \rt \hbeta}(b), &
    \Delta(\rho_{\beta}(b)) &= \rho_{\beta \lt \beta}(b), &
    \Delta(\rho_{\alpha}(b^{\dagger})) &= \rho_{\alpha \rt
      \alpha}(b^{\dagger}). 
  \end{align*}
  We claim that $V^{*}\kalpha{1} \subseteq {\cal
      L}^{\hDelta}\big(H_{\alpha},(\Hsource)_{}\big)$. Indeed, for
    each $y \in \rho_{\beta}(\frakB)'$ and $\xi \in \alpha$,
    \begin{align*}
      \hDelta(y)V^{*}|\xi\rangle_{\leg{1}} = V^{*}(1 \rtensorh
      y)VV^{*} |\xi\rangle_{\leg{1}} = V^{*}(1 \rtensorh
      y)|\xi\rangle_{\leg{1}} = V^{*}|\xi\rangle_{\leg{1}} y.
    \end{align*}
    Therefore, $\alpha \lt \alpha = V^{*} \lnspan \kalpha{1} \alpha
    \rnspan \subseteq \lnspan {\cal
      L}^{\hDelta}\big(H_{\alpha},(\Hsource)_{\alpha \lt \alpha}\big)
    \alpha\rnspan$, and a similar argument shows that $\hbeta
    \rt\hbeta \subseteq \lnspan {\cal
      L}^{\hDelta}\big(H_{\hbeta},(\Hsource)_{\hbeta \rt \hbeta}\big)
    \hbeta\rnspan$. The assertion follows.
\end{proof}

\subsection{Regularity}
\label{subsection:pmu-regular}

The regularity condition for multiplicative unitaries, introduced by
Baaj and Skandalis \cite{baaj:2} and generalized to
pseudo-multiplicative unitaries by Enock \cite{enock:10}, carries over to
$C^{*}$-pseudo-multiplicative unitaries as follows:
\begin{definition}
  Let $V \in {\cal L}(\Hsource,\Hrange)$ a pseudo-multiplicative
  unitary, where $H$, $\cbasesb$, and $\alpha,\hbeta,\beta$ are as in
  Definition \ref{definition:pmu}.  A $C^{*}$-pseudo-multiplicative
  unitary $V \colon \Hsource \to \Hrange$ is  {\em regular} if the
  composition
  \begin{align*}
    \lnspan \balpha{1} V \kalpha{2} \rnspan \colon H
    \xright{\kalpha{2}} \Hsource \xright{V}
    \Hrange \xright{\balpha{1}} H
  \end{align*}
  is equal to $\lnspan \alpha \alpha^{*} \rnspan$.
\end{definition}
\begin{remark} \label{remark:pmu-op-regular}
  Evidently, $V$ is regular if and only if $V^{op}$ is regular.
\end{remark}
An example of a regular $C^{*}$-pseudo-multiplicative unitary is given
in Section \ref{section:groupoids}.

We shall show that for a regular $C^{*}$-pseudo-multiplicative
unitary, the associated legs form concrete Hopf $C^{*}$-bimodules. The
first step in this direction is the following:
\begin{proposition} \label{proposition:pmu-regular-algebras}
  Let $V \colon \Hsource \to \Hrange$ be a regular
  $C^{*}$-pseudo-multiplicative unitary. Then
  $\hA(V)$ and $A(V)$ are
  $C^{*}$-algebras.
\end{proposition}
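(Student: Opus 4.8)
The plan is to show that each of $\hA$ and $A$ is a norm-closed $*$-invariant subalgebra of ${\cal L}(H)$; since a norm-closed $*$-invariant subalgebra of ${\cal L}(H)$ is automatically a $C^{*}$-algebra, and since $\hA$ and $A$ are already closed subalgebras by Proposition \ref{proposition:pmu-legs-algebras}, everything reduces to proving $\hA^{*}=\hA$ and $A^{*}=A$. First I would observe that it suffices to treat $\hA$. Indeed, by Remark \ref{remark:pmu-op-regular} the opposite unitary $V^{op}$ is again regular, so once we know that $\hA(W)$ is self-adjoint for \emph{every} regular $W$, applying this to $W=V^{op}$ and using Lemma \ref{lemma:pmu-legs-symmetry} shows that $A(V)^{*}=\hA(V^{op})$ is self-adjoint, which is exactly the assertion that $A(V)$ is self-adjoint.

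Next I would reduce the self-adjointness of $\hA$ to the single inclusion $\hA^{*}\subseteq\hA$: taking adjoints in this inclusion gives $\hA=(\hA^{*})^{*}\subseteq\hA^{*}$, and hence equality. Unwinding the definition and the ket-bra notation, $\hA^{*}=\lnspan \balpha{2}V^{*}\kbeta{2}\rnspan$, so the essential task is to convert an expression built from $V^{*}$ into one built from $V$, and I expect this to be precisely where the pentagon equation and regularity enter. Equivalently --- and this is the formulation in which I would actually run the computation --- it suffices to prove $\lnspan\hA^{*}\hA\rnspan=\hA$: in a closed algebra the relation $\hA=\lnspan\hA^{*}\hA\rnspan$ forces $*$-invariance, since any $b\in\hA$ is then a norm-limit of sums $\sum_{i} c_{i}^{*}d_{i}$ with $c_{i},d_{i}\in\hA$, and $b^{*}$ is the corresponding limit of $\sum_{i} d_{i}^{*}c_{i}\in\lnspan\hA^{*}\hA\rnspan=\hA$.

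For the core computation I would expand the products spanning $\lnspan\hA^{*}\hA\rnspan$ as $\balpha{2}V^{*}\big(\kbeta{2}\bbeta{2}\big)V\kalpha{2}$ and rewrite the central factor $\kbeta{2}\bbeta{2}$ as an operator supported on the second leg. Using the pentagon Diagram \eqref{eq:pmu-pentagon} on a threefold relative tensor product --- in the same spirit as the commuting diagram in the proof of Proposition \ref{proposition:pmu-legs-algebras}, but now with one leg carrying $V^{*}$ --- I would move $V^{*}$ and $V$ into adjacent legs so that an auxiliary slice of the form $\balpha{1}V\kalpha{2}$ appears. At that stage the regularity hypothesis $\lnspan\balpha{1}V\kalpha{2}\rnspan=\lnspan\alpha\alpha^{*}\rnspan$ collapses this auxiliary slice to $\lnspan\alpha\alpha^{*}\rnspan$, and the residual factor is absorbed back into $\hA$ by means of the module relations of Lemma \ref{lemma:pmu-legs-modules} (for instance $\lnspan\hA\,\rho_{\alpha}(\frakB^{\dagger})\rnspan=\hA$ and $\lnspan\hA\,\rho_{\hbeta}(\frakB)\rnspan=\hA$), together with the intertwining relations \eqref{eq:pmu-intertwine}. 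Carrying this out yields $\lnspan\hA^{*}\hA\rnspan=\hA$, and with the reductions above the proposition follows.

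The main obstacle I anticipate is not conceptual but bookkeeping: keeping track of which leg each factor and each $C^{*}$-factorization occupies, checking at every step that the slices and the operators $S\rtensorh T$ are well-defined (i.e.\ that the compatibility and intertwining conditions of \eqref{eq:pmu-intertwine} hold, so that Proposition \ref{proposition:rtp-functorial} applies), and --- crucially --- verifying that once regularity has produced $\lnspan\alpha\alpha^{*}\rnspan$, the leftover operators genuinely land back in $\hA$ rather than in some strictly larger algebra. This last point is exactly where regularity is indispensable: the pentagon manipulation by itself only controls these products up to the auxiliary factor $\lnspan\balpha{1}V\kalpha{2}\rnspan$, and regularity is the precise statement that this factor is the ``compact'' algebra $\lnspan\alpha\alpha^{*}\rnspan$, which the module structure of $\hA$ is able to absorb.
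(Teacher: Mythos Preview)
Your proposal is correct and follows essentially the same route as the paper: reduce to self-adjointness of $\hA$ (using $V^{op}$ for $A$), then establish it via a pentagon-plus-regularity diagram chase on a threefold relative tensor product that collapses the auxiliary slice $\lnspan\balpha{1}V\kalpha{2}\rnspan$ to $\lnspan\alpha\alpha^{*}\rnspan$. The only cosmetic difference is that the paper computes $\lnspan\hA\hA^{*}\rnspan$ and shows it equals $\hA^{*}$, whereas you propose to compute $\lnspan\hA^{*}\hA\rnspan$ and show it equals $\hA$; these two identities are adjoints of one another and the underlying diagram is the same up to reversal.
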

\begin{proof}
  We only prove the assertion
  concerning $\hA=\hA(V)$.  The following diagram commutes and shows
  that $\lnspan \hA \hA^{*}\rnspan = \lnspan \balpha{2} \bbeta{3}
  \Vl{12}^{*}\kbeta{3}\kbeta{2} \rnspan$:
  \begin{align*} \smalldiagram
    \xymatrix@R=20pt@C=35pt{
      & {\sHrange} \ar `r/0pt[rr]^{\kbeta{3}} [rrd] && \\
      H \ar `u/0pt[u] [ur]^(0.45){\kbeta{2}}
      \ar[d]^{\hA^{*}} \ar[r]^(0.45){\kbeta{2}}
      \ar@{}[rd]|{\scriptstyle (D)} &{\sHrange } \ar[r]^(0.4){
        \kalpha{3}} \ar[d]^{V^{*}} & {\Htwo} \ar[d]^{\Vl{12}^{*}}
      \ar[r]^{\Vl{23}} \ar@{}[rddd]|{\scriptstyle (P)} & {\Hthree} \ar `d/0pt[ddd]^{\Vl{12}^{*}} [dddl] \\
      H \ar@{=}[d] \ar@{}[rrd]|{\scriptstyle (R)} &
      {\sHsource} \ar[l]^(0.55){\balpha{2}}
      \ar[r]^(0.4){\kalpha{3}} &{\Hone}
      \ar[d]^{\Vl{23}} &\\H
      \ar[r]^(0.45){\kalpha{2}} \ar[d]^{\hA}
      \ar@{}[rd]|{\scriptstyle (D)} &{\sHsource} \ar[d]^{V}
      &{\Hfive}
      \ar[d]^{\Vl{13}} \ar[l]^(0.6){\balpha{2}}
      &\\H &{\sHrange}
      \ar[l]^(0.55){\bbeta{2}} &{\Hfour} \ar `d[d]
      [dl]^(0.6){\bbeta{3}}
      \ar[l]^(0.6){\balpha{2}} &\\
      & {\sHsource} \ar `l[l]^(0.55){\balpha{2}}
      [lu] & & }
  \end{align*}
  Indeed, the cells labeled by (D) commute by definition, cell (R)
  commutes because $V$ is regular, cell (P) is just Diagram
  \eqref{eq:pmu-pentagon}, and the remaining cells commute because of 
  \eqref{eq:pmu-intertwine}.

  The following commutative diagram shows that $\lnspan \balpha{2}
  \bbeta{3} \Vl{12}^{*}\kbeta{3}\kbeta{2} \rnspan=\hA^{*}$ and
  completes the proof:
  \begin{align*} \smalldiagram \xymatrix@C=35pt@R=20pt{ & {\sHrange}
      \ar `r/0pt[r]^(0.4){\kbeta{3}} [rd] \ar[d]^{ \rho_{\alpha \rt
          \alpha}(B)} &
      \\
      H \ar[r]^(0.45){\kbeta{2}} \ar[d]^{\hA^{*}} \ar `u[u]
      [ur]^(0.45){ \kbeta{2}} & {\sHrange} \ar[d]^{V^{*}} & {\Hthree}
      \ar[l]^(0.6){
        \bbeta{3}} \ar[d]^{\Vl{12}^{*}} \\
      H & {\sHsource} \ar[l]^(0.55){ \balpha{2}} & {\Hfour}
      \ar[l]^(0.6){\bbeta{3}} }
  \end{align*}
  \end{proof}

  \begin{lemma} \label{lemma:pmu-legs-delta} Let $V \colon \Hsource
    \to \Hrange$ be a $C^{*}$-pseudo-multiplicative unitary. Then
    $\hDelta(\hA)$ is equal to the composition
    \begin{align*}
      \smalldiagram \xymatrix{ {\sHsource} \ar[r]^(0.4){\kalpha{3}} &
        {\Hone} \ar[rr]^(0.45){\Vl{13}\Vl{23}} &&
        {\Hfour} \ar[r]^(0.6){\bbeta{3}} & {\sHsource.}}
    \end{align*}
  \end{lemma}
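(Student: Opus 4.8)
The plan is to evaluate $\hDelta$ on the generators of $\hA$ and to turn the conjugation by $V$ into the asserted composition by means of the pentagon equation. Since $\hA=\lnspan\bbeta{2}V\kalpha{2}\rnspan$, it suffices by linearity and continuity to treat a single generator $a=\langle\eta|_{\leg{2}}V|\xi\rangle_{\leg{2}}$ with $\eta\in\beta$ and $\xi\in\alpha$. First I would record that $\hA\subseteq\rho_{\beta}(\frakB)'$, so that $\hDelta(a)=V^{*}(1\rtensorh a)V$ is indeed defined; this is a short computation from the intertwining relations \eqref{eq:pmu-intertwine} and the pairwise compatibility of $\alpha,\hbeta,\beta$, in the spirit of Lemma \ref{lemma:pmu-legs-modules}.

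The key step is to promote the leg-$\leg{2}$ constituents of $a$ to the third leg of the triple relative tensor product. Viewing $a$ as acting on the second leg of $\Hrange$, its factors are promoted by $1\rtensorh|\xi\rangle_{\leg{2}}=\kalpha{3}$, $1\rtensorh V=\Vl{23}$, and $1\rtensorh\langle\eta|_{\leg{2}}=\bbeta{3}$, so that functoriality of the relative tensor product (Proposition \ref{proposition:rtp-functorial}) gives
\begin{align*}
  1\rtensorh a = \bbeta{3}\,\Vl{23}\,\kalpha{3},
\end{align*}
with $\kalpha{3}\colon\Hrange\to\Htwo$, $\Vl{23}\colon\Htwo\to\Hthree$, $\bbeta{3}\colon\Hthree\to\Hrange$ all well-defined by Lemma \ref{lemma:pmu-pentagon}. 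Hence $\hDelta(a)=\Vl{12}^{*}\,\bbeta{3}\,\Vl{23}\,\kalpha{3}\,\Vl{12}$, where $\Vl{12}=V$.

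It then remains to move the two outer copies of $V=\Vl{12}$ inward and to invoke the pentagon. Since $\kalpha{3}$ and $\bbeta{3}$ act on the third leg while $\Vl{12}$ acts on the first two, they commute, exactly as in the relation \eqref{eq:pmu-diagram-commutes} used in Proposition \ref{proposition:pmu-legs-algebras}; thus $\kalpha{3}\Vl{12}=\Vl{12}\kalpha{3}$ and $\bbeta{3}\Vl{12}=\Vl{12}\bbeta{3}$, which brings the expression to $\Vl{12}^{*}\,\bbeta{3}\,\Vl{23}\Vl{12}\,\kalpha{3}$. The pentagon equation of Diagram \eqref{eq:pmu-pentagon}, read as $\Vl{23}\Vl{12}=\Vl{12}\Vl{13}\Vl{23}$ on $\Hone$, replaces the middle pair, and after commuting $\bbeta{3}$ past the leading $\Vl{12}$ and cancelling $\Vl{12}^{*}\Vl{12}=\Id$ one is left with $\hDelta(a)=\bbeta{3}\Vl{13}\Vl{23}\kalpha{3}$, which is the claimed composition $\Hsource\xrightarrow{\kalpha{3}}\Hone\xrightarrow{\Vl{13}\Vl{23}}\Hfour\xrightarrow{\bbeta{3}}\Hsource$.

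The only real difficulty is bookkeeping: at each stage one must verify that the operators in the chain are well-defined between the correct iterated relative tensor products $\Hone,\dots,\Hfive$ and that the promotions and commutations respect the $C^{*}$-factorizations involved. All of this is furnished by Lemma \ref{lemma:pmu-pentagon} together with \eqref{eq:pmu-intertwine}, so I would present the argument as a commutative-diagram chase in the style of Propositions \ref{proposition:pmu-legs-algebras} and \ref{proposition:pmu-regular-algebras}, with cells justified respectively by the definitions of the leg operators, the promotion identity above, the pentagon, and the intertwining relations.
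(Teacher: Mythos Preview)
Your proof is correct and follows essentially the same route as the paper's: the paper encodes the identical four steps---definition of $\hDelta$, the promotion $1\rtensorh\hA=\bbeta{3}\Vl{23}\kalpha{3}$, commutation of the third-leg kets/bras past $\Vl{12}$, and the pentagon identity $\Vl{12}^{*}\Vl{23}\Vl{12}=\Vl{13}\Vl{23}$---in a single commutative diagram whose cells are justified exactly as you describe. Your reference to \eqref{eq:pmu-diagram-commutes} for the leg commutations is slightly off (that equation concerns a different pair of ket/bra operators), but the principle you invoke is the right one and matches the paper's justification via \eqref{eq:pmu-intertwine}.
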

  \begin{proof}
    This follows from the fact that the following diagram commutes:
    \begin{align*} \smalldiagram \xymatrix@C=25pt@R=25pt{
        & \ar@{}[rd]|{\scriptstyle (D)}&& \\
        {\sHsource} \ar `u/0pt[u] `r[urrr]^{ \hDelta(\hA)} [rrr]
        \ar[r]^{V} \ar[d]^{ \kalpha{3}} & \ar@{}[rd]|{\scriptstyle (D)} {\sHrange}
        \ar[r]^{\Id_{\alpha} \tr \hA} \ar[d]^{ \kalpha{3}} &
        {\sHrange} \ar[r]^{V^{*}} &
        {\sHsource} \\
        {\Hone} \ar[r]^{ \Vl{12}} \ar `d[d] `r[drrr]^{\Vl{13}\Vl{23}}
        [rrr] & {\Htwo} \ar[r]^{ \Vl{23}} \ar@{}[rd]|{\scriptstyle (P)} & {\Hthree}
        \ar[u]^{\bbeta{3}} \ar[r]^{\Vl{12}^{*}} & {\Hfour}
        \ar[u]^{\bbeta{3}} \\ &&& }
    \end{align*}
    Again, the cells labeled by (D) commute by definition, cell (P)
    is just Diagram \eqref{eq:pmu-pentagon}, and the other cells
    commute because of \eqref{eq:pmu-intertwine}.
  \end{proof}

The main result of this article is the following:
\begin{theorem} \label{theorem:pmu-regular}
  Let $V \colon \Hsource \to \Hrange$ be a regular
  $C^{*}$-pseudo-multiplicative unitary, where $H$, $\cbasesb$, 
  $\alpha,\hbeta,\beta$ are as in Definition
  \ref{definition:pmu}. Then
  \begin{align*}
    \big(\cbaseosb,H,\hA(V),\hbeta,\alpha,\hDelta\big) \quad \text{and} \quad
    \big(\cbasesb, A(V),\alpha,\beta,\Delta\big)  
  \end{align*}
  are concrete Hopf $C^{*}$-bimodules.
\end{theorem}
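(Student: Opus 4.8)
The plan is to verify, for each of the two tuples, the three defining conditions of a concrete Hopf $C^{*}$-bimodule. I would treat the first tuple $(\cbaseosb,H,\hA,\hbeta,\alpha,\hDelta)$ in detail; the second, $(\cbasesb,H,A,\alpha,\beta,\Delta)$, then follows by the completely parallel argument. To begin, I would assemble the underlying concrete $C^{*}$-algebra structure from the results already at hand: by Proposition \ref{proposition:pmu-regular-algebras} the space $\hA$ is a $C^{*}$-algebra, by Lemma \ref{lemma:pmu-legs-modules} it is an $\hbeta$-module and an $\alpha$-module, and $\hbeta\perp\alpha$, so that $(H,\hA,\hbeta,\alpha)$ is a concrete $C^{*}$-$\cbaseosb$-$\cbasesb$-algebra; it is nondegenerate since $\lnspan\hA H\rnspan=H$. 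The same lemma gives $\hA\subseteq{\cal L}(H_{\beta})\subseteq\rho_{\beta}(\frakB)'$, so that $\hDelta$ is defined on all of $\hA$.

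The crux is the containment
\[
  \hDelta(\hA)\subseteq \hA\fibre{\hbeta}{\frakH}{\alpha}\hA
  =\ind{\khbeta{1}}{\hA}\cap\ind{\kalpha{2}}{\hA}.
\]
For this I would start from the identity $\hDelta(\hA)=\lnspan\bbeta{3}\Vl{13}\Vl{23}\kalpha{3}\rnspan$ of Lemma \ref{lemma:pmu-legs-delta} and examine each of $\hDelta(a)\kalpha{2}$, $\hDelta(a)^{*}\kalpha{2}$, $\hDelta(a)\khbeta{1}$ and $\hDelta(a)^{*}\khbeta{1}$ for $a\in\hA$, commuting the extra ket-bra past the operators $\Vl{13}\Vl{23}$ with the pentagon \eqref{eq:pmu-pentagon} and the intertwining relations \eqref{eq:pmu-intertwine}, organised as the large commuting diagrams in the proofs of Propositions \ref{proposition:pmu-legs-algebras} and \ref{proposition:pmu-regular-algebras}. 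This is precisely the point at which \emph{regularity} enters: the identity $\lnspan\balpha{1}V\kalpha{2}\rnspan=\lnspan\alpha\alpha^{*}\rnspan$ is what allows the resulting composition to collapse into $\lnspan\kalpha{2}\hA\rnspan$, respectively $\lnspan\khbeta{1}\hA\rnspan$. I expect this diagram chase to be the \textbf{main obstacle}.

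Granting the containment, the remaining two conditions follow quickly. For nondegeneracy of the fiber product it suffices to observe that $\hDelta(\hA)$ is already nondegenerate on $\Hsource$: since $\lnspan\hA H\rnspan=H$, the family $\Id\rtensorh\hA$ acts nondegenerately on $\Hrange$, and conjugating by the unitary $V$ gives $\lnspan\hDelta(\hA)(\Hsource)\rnspan=\Hsource$. That $\hDelta$ is a morphism in the two required senses is Proposition \ref{proposition:pmu-delta} reread with $\hA\fibre{\hbeta}{\frakH}{\alpha}\hA$ as the actual target: the equivariance relations $\hDelta(\rho_{\alpha}(b^{\dagger}))=\rho_{\alpha\lt\alpha}(b^{\dagger})$ and $\hDelta(\rho_{\hbeta}(b))=\rho_{\hbeta\rt\hbeta}(b)$ together with multiplicativity of $\hDelta$ give condition i) of Definition \ref{definition:fp-morphism}, while the density statements $\alpha\lt\alpha=\lnspan{\cal L}^{\hDelta}(H_{\alpha},(\Hsource)_{\alpha\lt\alpha})\alpha\rnspan$ and $\hbeta\rt\hbeta=\lnspan{\cal L}^{\hDelta}(H_{\hbeta},(\Hsource)_{\hbeta\rt\hbeta})\hbeta\rnspan$ proved there give condition ii); these density statements persist after restricting $\hDelta$ to $\hA$, since shrinking the domain only enlarges the intertwiner spaces ${\cal L}^{\hDelta}$.

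For the coassociativity condition I would express both composites $(\Id\ast\hDelta)\circ\hDelta$ and $(\hDelta\ast\Id)\circ\hDelta$ on the triple product $\Hone$ as conjugations of $\Id\rtensorh\Id\rtensorh a$ by suitable products of the leg operators $\Vl{12},\Vl{13},\Vl{23}$, and check that the pentagon \eqref{eq:pmu-pentagon} identifies the two after applying the associativity isomorphism $\Theta$ of Proposition \ref{proposition:rtp-associative}; this is the $C^{*}$-relative-tensor-product analogue of the classical derivation of coassociativity from the pentagon equation. Finally, the second tuple $(\cbasesb,H,A,\alpha,\beta,\Delta)$ is handled by the entirely parallel argument — every ingredient used above has an $A$-counterpart already recorded in Propositions \ref{proposition:pmu-regular-algebras} and \ref{proposition:pmu-delta} and Lemmas \ref{lemma:pmu-legs-modules} and \ref{lemma:pmu-legs-delta} — and may equally be read off from the first case applied to $V^{op}$, using $A(V)=\hA(V^{op})^{*}$ and the interchange of the roles of $\hbeta$ and $\beta$ (Lemma \ref{lemma:pmu-legs-symmetry}, Remark \ref{remark:pmu-opposite}) together with the invariance of regularity under $V\mapsto V^{op}$ (Remark \ref{remark:pmu-op-regular}).
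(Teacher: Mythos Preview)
Your outline is correct and matches the paper's structure, but you have misidentified where regularity is actually used. In the paper's proof, the containment $\hDelta(\hA)\subseteq\hA\fibre{\hbeta}{\frakH}{\alpha}\hA$ is established by a diagram chase that uses \emph{only} Lemma~\ref{lemma:pmu-legs-delta}, the intertwining relations~\eqref{eq:pmu-intertwine}, and an exchange of ket-bra operators analogous to~\eqref{eq:pmu-diagram-commutes}; regularity plays no role there. The diagram yields $\lnspan\hDelta(\hA)\kalpha{2}\rnspan=\lnspan\kalpha{2}\hA\rnspan$ directly, and a symmetric one gives $\lnspan\hDelta(\hA)^{*}\khbeta{1}\rnspan=\lnspan\khbeta{1}\hA^{*}\rnspan$. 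You do not need to verify four separate inclusions: since $\hA$ is already a $C^{*}$-algebra by Proposition~\ref{proposition:pmu-regular-algebras}, one has $\hDelta(\hA)^{*}=\hDelta(\hA)$ and $\hA^{*}=\hA$, so two inclusions suffice. Thus regularity enters the proof only at that earlier point --- to make $\hA$ self-adjoint --- and not again. The step you flagged as the ``main obstacle'' is therefore more routine than you anticipate, and attempting to invoke $\lnspan\balpha{1}V\kalpha{2}\rnspan=\lnspan\alpha\alpha^{*}\rnspan$ there will not help and may lead you astray.

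The remainder of your plan --- nondegeneracy of the fiber product from nondegeneracy of $\hDelta(\hA)$ on $\Hsource$, the morphism conditions from Proposition~\ref{proposition:pmu-delta}, coassociativity from the pentagon expressed as two conjugations of $\Id\rtensorh\Id\rtensorh\ha$, and the second tuple via the symmetry under $V\mapsto V^{op}$ --- coincides with the paper's argument.
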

\begin{proof}
  First, we show that $\hDelta(\hA) \subseteq \hA
  \fibre{\hbeta}{\frakH}{\alpha} \hA$.
  The following diagram commutes and shows that $\lnspan\hDelta(\hA)
  \kalpha{2} \rnspan = \lnspan \kalpha{2} \hA\rnspan$:
  \begin{align*} \smalldiagram
    \xymatrix@C=22pt@R=25pt{ & & {H} \ar `l/0pt[l]
      [ld]^{\kalpha{2}} \ar@{}[rrd]|{\scriptstyle (D)} \ar `r[rr]^{\hA} [rrd]
      \ar[d]^{\kalpha{2}}&& \\
      & {\sHsource} \ar[d]^{\kalpha{3}} \ar
      `l/0pt[l] `d[ldd] `r[ddrrr]^{\hDelta(\hA)} [drrr] &
      {\sHsource} \ar[d]^{\kalpha{2}}
      \ar[r]^{V} & {\sHrange}
      \ar[r]^(0.6){\bbeta{2}} \ar[d]^{\kalpha{2}}
       \ar@{}[rd]|{\scriptstyle (C)} & 
      H  \ar[d]^{\kalpha{2}}\\
      & {\Hone} \ar[r]^(0.45){
        \Vl{23}}  \ar@{}[d]|{\scriptstyle (L)} & {\Hfive} \ar[r]^{\Vl{13}} & 
      {\Hfour} \ar@{}[d]|{\scriptstyle (L)}
      \ar[r]^(0.6){\bbeta{3}} & {\sHsource}
      \\
      & &&& }
  \end{align*}
  Here, cell (D) commutes by definition, cell (C) commutes by a
  similar calculation as in \eqref{eq:pmu-diagram-commutes}, cell (L)
  commutes by Lemma \ref{lemma:pmu-legs-delta}, an the remaining cells
  commute because of \eqref{eq:pmu-intertwine}.  A similar commutative
  diagram shows that $\lnspan \hDelta(\hA)^{*} \khbeta{1}\rnspan =
  \lnspan \khbeta{1}\hA^{*}\rnspan$.  Thus, $\hDelta(\hA) \subseteq
  \hA \fibre{\hbeta}{\frakH}{\alpha} \hA$. 

  Proposition \ref{proposition:pmu-delta} and Remark
  \ref{remark:fp-morphism-nondegenerate} imply that $\hA
  \fibre{\hbeta}{\frakH}{\alpha} \hA \subseteq {\cal L}(\Hsource)$ is
  nondegenerate. Moreover, by   Proposition
  \ref{proposition:pmu-delta},
  \begin{align*}
    \hDelta \in \Mor\big(\hA_{\alpha}, (\hA
    \fibre{\hbeta}{\frakH}{\alpha} \hA)_{\alpha \lt \alpha}\big) \cap
    \Mor\big(\hA_{\hbeta}, (\hA \fibre{\hbeta}{\frakH}{\alpha}
    \hA)_{\alpha \lt \hbeta}\big).
  \end{align*}

  The following commutative diagram shows that $(\hDelta \ast
  \Id)(\hDelta(\ha))=(\Id \ast\hDelta)(\hDelta(\ha))$ for each $\ha
  \in \hA$ and thus completes the proof:
  \begin{align*} \hspace{-1ex}  \smalldiagram
    \xymatrix@C=25pt@R=25pt{
      &&& \\
      {\Htwo} \ar[r]^{V_{\leg{23}}} \ar[d]^{\Id \rtensorh
        \hDelta(\ha)} \ar@{} `u/0pt[u] `[urrr]|{\spacestyle \Hone} [rrr]
      \ar@{<-} `u/0pt[u]_{V_{\leg{12}}} [ur]+<17pt,0pt>
      & {\Hthree}
      \ar[r]^{V_{\leg{12}}^{*}} \ar[d]^{\Id \rtensorh \Id \rtensorh \ha} & {\Hfour} \ar[r]^{V_{\leg{12}}^{*}\Sigma_{\leg{23}}} \ar[d]^{\Id \rtensorh
        \Id \rtensorh \ha}&
      {\Hfourlt}  \ar[d]^{\hDelta(\ha) \rtensorh \Id}
      \ar@{<-} `u/0pt[u]_{\Sigma_{\leg{23}}V_{\leg{23}}} [ul]-<15pt,0pt>
      \\
      {\Htwo} \ar@{} `d/0pt[d]
      `r[drrr]|{\textstyle \Hone} [rrr]
      \ar `d/0pt[d]^{V_{\leg{12}}^{*}} `r[dr]+<17pt,0pt>
      & {\Hthree} \ar[l]^{V_{\leg{23}}^{*}} & {\Hfour}
      \ar[l]^{V_{\leg{12}}} &
      {\Hfourlt}  \ar[l]^{\Sigma_{\leg{23}}V_{\leg{12}}}
      \ar `d/0pt[d]^{V_{\leg{23}}^{*}\Sigma_{\leg{23}}} [dl]-<15pt,0pt>
      \\
      &&&}
  \end{align*}
  Here, the upper and lower cells commute by \eqref{eq:pmu-pentagon}, and
  the other cells commute by definition of $\hDelta$ or trivially.
  Hence,
  \begin{align*}
  (\hDelta \ast \Id)\big(\hDelta(\ha)\big) &=
  V_{\leg{12}}^{*}\big(\Id \rtensorh \hDelta(\ha)\big)V_{\leg{12}} \\ &=
  V_{\leg{23}}^{*}\Sigma_{\leg{23}}\big(\hDelta(\ha)\rtensorh
  \Id\big)\Sigma_{\leg{23}}V_{\leg{23}} = (\Id \vnast
  \hDelta)\big(\hDelta(\ha)\big).   \qedhere
  \end{align*}
\end{proof}

For completeness, we include the following additional result:
\begin{proposition}
  Let $V \colon \Hsource \to \Hrange$ be a
  $C^{*}$-pseudo-multiplicative unitary. Then the space
  \begin{align*}
    C:=\lnspan
  \balpha{1}V\kalpha{2}\rnspan \subseteq {\cal L}(H)
  \end{align*}
 is an algebra.  If $V$ is regular, then $C$ is a $C^{*}$-algebra.
\end{proposition}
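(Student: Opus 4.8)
The plan is to treat the two assertions separately, dealing with the $C^{*}$-algebra claim first since it is immediate from regularity. If $V$ is regular, then by definition $C = \lnspan \balpha{1} V \kalpha{2}\rnspan = \lnspan \alpha\alpha^{*}\rnspan$. Since $\alpha$ is a concrete $C^{*}$-module with $\lnspan \alpha^{*}\alpha\rnspan = \frakB$ and $\lnspan \alpha\frakB\rnspan = \alpha$, the space $\lnspan\alpha\alpha^{*}\rnspan \subseteq {\cal L}(H)$ is self-adjoint and satisfies $\lnspan\alpha\alpha^{*}\rnspan\lnspan\alpha\alpha^{*}\rnspan = \lnspan\alpha\alpha^{*}\alpha\alpha^{*}\rnspan = \lnspan\alpha\frakB\alpha^{*}\rnspan = \lnspan\alpha\alpha^{*}\rnspan$, so it is a $C^{*}$-subalgebra. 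Hence $C$ is a $C^{*}$-algebra whenever $V$ is regular, and it remains only to prove that $C$ is always an algebra.

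For the algebra claim I would mirror the proof of Proposition \ref{proposition:pmu-legs-algebras} almost verbatim, replacing the outer slices $\bbeta{2}$ there by $\balpha{1}$ here. Explicitly, I would lift the product $\lnspan CC\rnspan = \lnspan \balpha{1} V \kalpha{2}\balpha{1} V \kalpha{2}\rnspan$ to the triple relative tensor product $\Hone$: first rewrite the inner junction $\kalpha{2}\balpha{1}$ by moving one of the two slices onto a fresh spectator leg by means of a ket--bra commutation identity of the same type as \eqref{eq:pmu-diagram-commutes}, which turns the product of the two copies of $V$ into an expression involving $\Vl{12}$, $\Vl{13}$, $\Vl{23}$; then apply the pentagon equation \eqref{eq:pmu-pentagon} (the cell labelled $(P)$ in those diagrams); and finally collapse the spectator leg using the intertwining relations \eqref{eq:pmu-intertwine}. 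As in the two-diagram argument for $\hA$, the first commutative diagram should identify $\lnspan CC\rnspan$ with an expression in which the two copies of $V$ have merged into a single $\Vl{12}$ flanked by ket--bra slices on legs $2$ and $3$ --- in direct analogy with $\lnspan \hA\hA\rnspan = \lnspan \bbeta{2}\balpha{3}\Vl{12}\kalpha{3}\kalpha{2}\rnspan$ --- and a second diagram, built purely from \eqref{eq:pmu-intertwine}, should identify this in turn with $C$. This yields $\lnspan CC\rnspan = C$ as an equality of closed subspaces, so both inclusions are obtained at once.

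The main obstacle, exactly as in Proposition \ref{proposition:pmu-legs-algebras}, is the bookkeeping: at each arrow one must verify that the operators $\Vl{ij}$ and the ket--bra maps are well-defined on the relevant relative tensor products, which is guaranteed by Proposition \ref{proposition:rtp-functorial} together with the pairwise compatibility of $\alpha,\hbeta,\beta$ and the relations \eqref{eq:pmu-intertwine}. The only genuine difference from the $\hA$ computation is that the two outer slices here live on leg $1$ (via $\balpha{1}$) rather than on leg $2$; consequently the spectator leg on which one moves a slice, and the precise commutation identity used, must be chosen to match this, and it is this choice --- rather than any new analytic input --- that constitutes the real work. Once the correct leg assignment is fixed, the two diagrams commute for exactly the same reasons as before (the defining relations of the slices, the pentagon \eqref{eq:pmu-pentagon}, and \eqref{eq:pmu-intertwine}).
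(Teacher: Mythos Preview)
Your approach to the first assertion (that $C$ is an algebra) is essentially the paper's: two commutative diagrams built from the pentagon equation, the intertwining relations \eqref{eq:pmu-intertwine}, and a ket--bra commutation of the type \eqref{eq:pmu-diagram-commutes}. The only correction is that the single surviving $V$ after the first diagram sits on legs $1$ and $3$ rather than $1$ and $2$: the paper's outer path reads $\lnspan \balpha{1}\balpha{2}\Vl{13}\kalpha{2}\kalpha{2}\rnspan$, and the second diagram then collapses this to $C$ via $\balpha{2}\kalpha{2} = \rho_{\hbeta\rt\beta}(\frakB)$. This is exactly the kind of leg-bookkeeping you flagged as the real work, so your sketch is on target.

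For the second assertion your argument is genuinely different and more economical than the paper's. The paper proves $\lnspan CC^{*}\rnspan = C$ by another pair of commutative diagrams, one of which contains a cell labelled $(R)$ that invokes regularity in the form $\lnspan\balpha{2}V^{*}\kalpha{1}\rnspan = \lnspan\alpha\alpha^{*}\rnspan$; this mirrors the proof of Proposition~\ref{proposition:pmu-regular-algebras} and is somewhat involved. You instead observe that regularity already \emph{identifies} $C$ with $\lnspan\alpha\alpha^{*}\rnspan$, which is visibly a $C^{*}$-algebra since $\alpha^{*}\alpha = \frakB$ and $\alpha\frakB = \alpha$. This bypasses the diagrams entirely, is correct, and in fact renders the first assertion redundant in the regular case. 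The paper's route has the minor advantage of exhibiting the relation $\lnspan CC^{*}\rnspan = C$ explicitly, but yours is the cleaner proof of the stated claim.
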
 
\begin{proof}
  The first assertion follows by combining the following commutative
  diagrams:
 \begin{align*} \hspace{-0.25cm} \smalldiagram
    \xymatrix@C=-3pt@R=25pt{ & H \ar[d]^{ \kalpha{2}} \ar[rr]^{C} \ar
      `l/0pt[l] [ld]^{\kalpha{2}} & & H \ar[rd]^(0.65){\kalpha{2}}
      \ar[rr]^{C} & & H  &  \\
      {\sHsource} \ar `d[dd]^(0.75){\kalpha{2}} [ddr] & {\sHsource}
      \ar[r]^(0.55){V} \ar[d]^{\kalpha{3}} & {\sHrange}
      \ar[ru]^(0.35){\balpha{1}} \ar[rd]^{ \kalpha{3}} & & {\sHsource}
      \ar[r]^(0.45){V} & {\sHrange} \ar[u]^{\balpha{1}} & {\sHrange}
      \ar `u[u]^{\balpha{1}} [ul] \\
      & {\Hone} \ar[rr]^{ \Vl{12}} \ar[d]^{\Vl{23}} && {\Htwo}
      \ar[ru]^{\balpha{1}} \ar[rr]^{\Vl{23}} && {\Hthree}
      \ar[u]^{\balpha{1}}   & \\
      & {\Hfive} \ar[rrrr]^{\Vl{13}} & &&& {\Hfour} \ar[u]^{ \Vl{12}}
      \ar `r/0pt[r] [ruu]^(0.25){\balpha{2}} & } 
  \end{align*}
  \begin{gather*} \smalldiagram \xymatrix@C=50pt@R=20pt{ & H \ar
      `l/0pt[l] [ld]^{{\kalpha{2}}} \ar[r]^{C}
      \ar[d]^{{\kalpha{2}}} & H \\
      {\sHsource} \ar `d/0pt[d]^{\kalpha{2}} [dr]
      \ar[r]^(0.55){\rho_{\hbeta \rt \beta}(B)} & {\sHsource} \ar[r]^{
        V} & {\sHrange} \ar[u]^{\balpha{1}} \\
      & {\Hfive} \ar[u]^{\balpha{2}} \ar[r]^{\Vl{13}} & {\Hfour}
      \ar[u]^{\balpha{2}} }
  \end{gather*}  
  To prove the second assertion, we combine the  following two
 commutative diagrams:
  \begin{align*} \smalldiagram \xymatrix@R=20pt@C=35pt{ H
      \ar[d]^{C^{*}} \ar[r]^(0.45){\kalpha{1}} &{\sHrange }
      \ar[r]^(0.4){\kalpha{3}} \ar[d]^{V^{*}} & {\Htwo} \ar[d]^{
        \Vl{12}^{*}} \ar `r/0pt[r] [rddd]^{\Vl{23}}   & \\
      H \ar@{=}[d] &{\sHsource} \ar[l]^(0.55){\balpha{2}}
      \ar[r]^(0.4){ \kalpha{3}} &{\Hone} \ar[d]^{\Vl{23}} & \\H
      \ar[r]^(0.45){\kalpha{2}} \ar[d]^{C} &{\sHsource} \ar[d]^{ V}
      &{\Hfive} \ar[d]^{\Vl{13}} \ar[l]^(0.6){ \balpha{2}} &\\H &
      {\sHrange} \ar[l]^(0.55){\balpha{1}} & {\Hfour}
      \ar[l]^(0.6){\balpha{2}} &{\Hthree} \ar[l]^(0.4){\Vl{12}^{*}}
      \ar `d[d]
      [dll]^(0.6){\balpha{1}} \\
      & {\sHrange} \ar `l[l]^(0.55){\balpha{1}} [lu] & & }
  \end{align*}
  \begin{align*} \smalldiagram \xymatrix@C=35pt@R=25pt{ & {\sHrange}
      \ar `r/0pt[r]^(0.4){ \kbeta{3}} [rd] \ar[d]^{\rho_{\alpha \rt
          \alpha}(B)} &
      \\
      H \ar[r]^(0.45){\kbeta{2}} \ar[d]^{C} \ar `u[u]
      [ur]^(0.45){\kbeta{2}} & {\sHrange} \ar[d]^{V^{*}} & {\Hthree} \ar[l]^(0.6){
        \bbeta{3}} \ar[d]^{\Vl{12}^{*}} \\
      H & {\sHsource} \ar[l]^(0.55){ \balpha{2}} & {\Hfour}
      \ar[l]^(0.6){\bbeta{3}} }
  \end{align*}
\end{proof}

\section{Locally compact groupoids}
\label{section:groupoids}

 The prototypical example of a $C^{*}$-pseudo-multiplicative unitary is the
 unitary associated to a locally compact groupoid. The underlying
 pseudo-multiplicative unitary was introduced by Vallin
 \cite{vallin:2}, and associated unitaries on $C^{*}$-modules were
 discussed  in \cite{timmermann:thesis,timmermann}.  
We construct the $C^{*}$-pseudo-multiplicative unitary, prove that it
is regular, and show that the associated legs are just the function
algebra of the groupoid on one side and the reduced groupoid
$C^{*}$-algebra on the other side.
For background on groupoids, Haar measures, and quasi-invariant
measures, see \cite{renault} or \cite{paterson}.

Let $G$ be a locally compact, Hausdorff, second countable groupoid. We
denote its unit space by $G^{0}$, its range map by $r_{G}$, its source
map by $s_{G}$, and put $G^{u}:=r_{G}^{-1}(\{u\})$,
$G_{u}:=s_{G}^{-1}(u)$ for each $u \in G^{0}$.

We assume that $G$ has a left Haar system $\lambda$, and denote the
associated right Haar system by $\lambda^{-1}$. Let $\mu$ be a measure
on $G^{0}$ and denote by $\nu$ the measure on $G$ given by
 \begin{align*}
   \int_{G} f \,d \nu &:= \int_{G^{0}} \int_{G^{u}} f(x) \, d\lambda^{u}(x)
\,   d\mu(u) \quad \text{for all } f \in C_{c}(G).
 \end{align*}
 The push-forward of $\nu$ via the inversion map $G \to G$, $x \mapsto
 x^{-1}$, is denoted by $\nu^{-1}$; evidently,
 \begin{align*}
   \int_{G} f d\nu^{-1} &= \int_{G^{0}}  \int_{G_{u}} f(x)
   d\lambda^{-1}_{u}(x) \, d\mu(u).
 \end{align*}
We assume that
 the measure $\mu$ is  quasi-invariant, i.e., that $\nu$ and
 $\nu^{-1}$ are equivalent. Note that there always exist sufficiently many
 quasi-invariant measures \cite{renault}.  We denote  by
 $D:=d\nu/d\nu^{-1}$ the Radon-Nikodym derivative.

 The measure $\mu$ defines a tracial proper weight on the
 $C^{*}$-algebra $C_{0}(G^{0})$, which we denote by $\mu$
 again. Moreover, we denote by $\cbasesb$ the $C^{*}$-base associated
 to $\mu$ as in Example \ref{example:rtp-base-weight}; thus,
 $\frakH=L^{2}(G^{0},\mu)$. Note that $\cbasesb=\cbaseosb$ because
 $C_{0}(G^{0})$ is commutative.

 Put $H:=L^{2}(G,\nu)$ and define representations $r,s\colon
 C_{0}(G^{0}) \to {\cal L}(L^{2}(G,\nu)$ by
 \begin{align*}
  \big(r(f)\xi\big)(x) &:=
 f\big(r_{G}(x)\big)\xi(x), & 
\big(s(f)\xi\big)(x) &:=
 f\big(s_{G}(x)\big) \xi(x)
 \end{align*}
 for all $x \in G$, $\xi \in C_{c}(G)$, and $f \in C_{0}(G^{0})$.

 The space $C_{c}(G)$ forms a pre-$C^{*}$-module over $C_{0}(G^{0})$
 with respect to the structure maps
 \begin{align*}
   \begin{aligned}
     \langle \xi'|\xi\rangle(u)&= \int_{G^{u}}
     \overline{\xi'(x)}\xi(x) d\lambda^{u}(x), & (\xi f)(x) &=
     \xi(x)f(r_{G}(x)),
   \end{aligned}
 \end{align*}
and also with respect to the structure maps
\begin{align*}
  \begin{aligned}
    \langle \xi'|\xi\rangle(u)&= \int_{G_{u}} \overline{\xi'(x)}\xi(x)
    d\lambda^{-1}_{u}(x), & (\xi f)(x) &= \xi(x)f(s_{G}(x)).
  \end{aligned}
\end{align*}
We denote the completions of these  pre-$C^{*}$-modules by
$L^{2}(G,\lambda)$ and $L^{2}(G,\lambda^{-1})$, respectively.
\begin{proposition} \label{proposition:groupoid-factorizations}
  There exist isometric embeddings
  \begin{align*}
    j\colon L^{2}(G,\lambda) &\to {\cal L}(\frakH,H) &&\text{and}& \hat j \colon
    L^{2}(G,\lambda^{-1}) &\to {\cal L}\big(\frakH,H\big)
  \end{align*}
  such that for all $\xi \in C_{c}(G)$,  $\zeta \in L^{2}(G^{0},\mu)$, $x \in G$,
  \begin{align*}
    \big(j(\xi) \zeta\big)(x) &= \xi(x)\zeta(r_{G}(x)), &
    \big(\hat j(\xi)\zeta\big)(x) &= \xi(x) D^{-1/2}(x) \zeta(s_{G}(x)).
  \end{align*}
  The images $\alpha:=j(L^{2}(G,\lambda))$ and $\hbeta := \hat
  j(L^{2}(G,\lambda^{-1}))$ are compatible $C^{*}$-factorizations of
  $H$ with respect to $\cbasesb$. We have $\rho_{\alpha}=r$ and
  $\rho_{\hbeta}=s$.  Finally, $j$ and $\hat j$ are unitary maps of
  $C^{*}$-modules over $C_{0}(G^{0}) \cong \frakB$.
\end{proposition}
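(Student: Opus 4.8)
The plan is to verify all assertions by computing on the dense subspaces $C_c(G) \subseteq L^2(G,\nu) = H$ and $C_c(G) \subseteq L^2(G,\lambda), L^2(G,\lambda^{-1})$, and on continuous compactly supported vectors $\zeta \in \frakH = L^2(G^0,\mu)$, afterwards passing to completions. First I would show that $j(\xi)$ and $\hat j(\xi)$ extend to adjointable operators and compute the basic inner products. Using the disintegration $\int_G f\,d\nu = \int_{G^0}\int_{G^u} f\,d\lambda^u\,d\mu$ and $r_G(x)=u$ for $x \in G^u$, a Fubini-type calculation yields, for $\xi,\xi' \in C_c(G)$ and $\zeta,\zeta' \in \frakH$,
\[
  \langle j(\xi)\zeta | j(\xi')\zeta'\rangle_H = \langle \zeta | \pi_\mu(\langle\xi|\xi'\rangle_\lambda)\zeta'\rangle_{\frakH},
\]
whence $j(\xi)^*j(\xi') = \pi_\mu(\langle\xi|\xi'\rangle_\lambda)$; in particular $j$ is isometric and $[\alpha^*\alpha] = \pi_\mu(C_0(G^0)) = \frakB$. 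The same computation for $\hat j$ is where quasi-invariance enters: the factor $D^{-1/2}(x)$ together with $d\nu = D\,d\nu^{-1}$ turns the $\nu$-integral into a $\nu^{-1}$-integral, giving $\hat j(\xi)^*\hat j(\xi') = \pi_\mu(\langle\xi|\xi'\rangle_{\lambda^{-1}})$ and hence $[\hbeta^*\hbeta] = \frakB$ (recall $\frakB = \frakB^{\dagger}$, since $C_0(G^0)$ is commutative). Inspecting the defining formulas also gives the module relations $j(\xi)\pi_\mu(f) = j(\xi f)$ and $\hat j(\xi)\pi_\mu(f) = \hat j(\xi f)$, and therefore $[\alpha\frakB] = \alpha$, $[\hbeta\frakB] = \hbeta$.

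The crux is density, i.e.\ $[\alpha\frakH] = H = [\hbeta\frakH]$. For $\alpha$ this is easy: given $\phi \in C_c(G)$, choose $\zeta \in C_c(G^0)$ equal to $1$ on the compact set $r_G(\supp\phi)$; then $j(\phi)\zeta = \phi$, so $C_c(G) \subseteq [\alpha\frakH]$ and density follows. For $\hbeta$ the Radon-Nikodym derivative $D$ is only measurable, so rather than substitute $D^{1/2}$ directly I would factor $\hat j$ through the source-analogue of $j$. The map $j_s(\xi)\zeta := \big(x \mapsto \xi(x)\zeta(s_G(x))\big)$ is, by the same inner-product computation now carried out over $\nu^{-1}$, an isometry $L^2(G,\lambda^{-1}) \tr \frakH \to L^2(G,\nu^{-1})$ with dense image (by the identical $C_c$-argument), while multiplication by $D^{-1/2}$ is a unitary $L^2(G,\nu^{-1}) \to L^2(G,\nu) = H$ because $\int|D^{-1/2}\phi|^2\,d\nu = \int|\phi|^2\,d\nu^{-1}$. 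Since $\hat j(\xi)\zeta = D^{-1/2}\cdot(j_s(\xi)\zeta)$, the operator $\hat j$ is the composite of these two unitaries, hence has dense image. Combined with the isometry and module properties, this shows that $j$ and $\hat j$ are unitaries of $C^{*}$-modules over $C_0(G^0) \cong \frakB$ onto $\alpha$ and $\hbeta$, and that $\alpha, \hbeta$ are $C^{*}$-factorizations.

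It remains to identify the representations and check compatibility. From the defining relation $\rho_\alpha(\pi_\mu(g))\,j(\xi)\zeta = j(\xi)\,\pi_\mu(g)\zeta$, evaluation at $x$ gives $\xi(x)g(r_G(x))\zeta(r_G(x)) = (r(g)j(\xi)\zeta)(x)$, so $\rho_\alpha = r$; the same argument with $s_G$ gives $\rho_{\hbeta} = s$. For $\alpha \perp \hbeta$ I would verify $[\rho_\alpha(\frakB^{\dagger})\hbeta] = \hbeta$ and $[\rho_{\hbeta}(\frakB)\alpha] = \alpha$: here $r(g)\hat j(\xi) = \hat j\big((g\circ r_G)\xi\big)$ and $s(g)j(\xi) = j\big((g\circ s_G)\xi\big)$ with $(g\circ r_G)\xi, (g\circ s_G)\xi \in C_c(G)$, and choosing $g$ equal to $1$ on the relevant compact image recovers $\hat j(\xi)$, resp.\ $j(\xi)$, which yields the two equalities. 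The main obstacle throughout is the density claim for $\hbeta$: the non-continuity of $D$ blocks a naive $D^{1/2}$-substitution inside $C_c(G)$ and forces the detour through $j_s$ and the weighted space $L^2(G,\nu^{-1})$.
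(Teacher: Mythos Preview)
Your proposal is correct and follows essentially the same approach as the paper: compute the inner products $\langle j(\xi')\zeta'|j(\xi)\zeta\rangle_H$ and $\langle \hat j(\xi')\zeta'|\hat j(\xi)\zeta\rangle_H$ on $C_c$-vectors, using the disintegration of $\nu$ for the first and the identity $D^{-1}\,d\nu = d\nu^{-1}$ for the second, to conclude that $j$ and $\hat j$ are isometric $C^*$-module maps; the paper then dismisses the remaining verifications (that the images are $C^*$-factorizations, that $\rho_\alpha=r$, $\rho_{\hbeta}=s$, and $\alpha\perp\hbeta$) as ``straightforward'' and ``routine''. Your treatment of the density $[\hbeta\,\frakH]=H$ via the factorization $\hat j = (D^{-1/2}\cdot)\circ j_s$ through the unitary $L^2(G,\nu^{-1})\to L^2(G,\nu)$ is a clean way to make explicit a point the paper does not spell out.
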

\begin{proof}
  Let $\xi,\xi ' \in C_{c}(G)$ and $\zeta,\zeta' \in C_{c}(G^{0})$. Then
  \begin{align*}
    \big\langle j(\xi')\zeta' \big| j(\xi)\zeta\big\rangle_{H} &=
    \int_{G^{0}} \int_{G^{u}} \overline{\xi'(x)\zeta'(r_{G}(x))}
    \xi(x) \zeta(r_{G}(x)) d\lambda^{u}(x) d\mu(u)
    \\
    &= \int_{G^{0}} \langle\xi'|\xi\rangle_{L^{2}(G,\lambda)}
    \overline{\zeta'(u)} \zeta(u) d\mu(u) = \big\langle \zeta'\big|
    \pi_{\mu}\big(\langle\xi'|\xi\rangle_{L^{2}(G,\lambda)}\big)
    \zeta\big\rangle_{\frakH} \shortintertext{and} \big\langle \hat
    j(\xi')\zeta'\big|\hat j(\xi)\zeta \big\rangle_{H} &= \int_{G}
    \overline{\xi'(x)\zeta'(s_{G}(x))}
    \xi(x)\zeta(s_{G}(x))  D^{-1}(x)d\nu(x) \\
    &= \int_{G} \overline{\xi'(x)\zeta'(s_{G}(x))}
    \xi(x)\zeta(s_{G}(x)) d\nu^{-1}(x) \\
    &= \int_{G^{0}} \int_{G_{u}} \overline{\xi'(x)}\xi(x)
    \overline{\zeta'(u)}\zeta(u) d\lambda^{-1}_{u}(x) d\mu(u) 
    = \big \langle \zeta' \big|
    \pi_{\mu}\big(\langle\xi'|\xi\rangle_{L^{2}(G,\lambda^{-1})}\big)
    \zeta\big\rangle_{\frakH}.
  \end{align*}
  These calculations prove the existence of the isometric embeddings
  $j$ and $\hat j$. Straightforward arguments show the images of these
  embeddings are $C^{*}$-factorizations, and the calculations above
  show that these embeddings are unitary maps of $C^{*}$-modules.  The
  assertions $\rho_{\alpha}=r$, $\rho_{\hbeta}=s$, and $\alpha\perp
  \hbeta$ follow from routine calculations and arguments.
\end{proof}
Put $\beta:=\alpha$.

Denote by $\tilde \mu$ the extension of the weight $\mu$ to the von
Neumann algebra $\pi_{\mu}(C_{0}(G^{0}))'' \cong
L^{\infty}(G^{0},\mu)$, and by $\tilde r$ and $\tilde s$ the
extensions of $r=\rho_{\alpha}$ and $s=\rho_{\hbeta}$ to
$L^{\infty}(G^{0},\mu)$. From now on, we identify
\begin{align} \label{eq:groupoid-iso1}
  H \vtensor{\tilde s}{\tilde \mu}{\tilde r} H &\cong \Hsource, &
  H \vtensor{\tilde r}{\tilde \mu}{\tilde r} H &\cong \Hrange
\end{align}
as in Proposition \ref{proposition:rtp-comparison}.  By
\cite{vallin:2}, the Hilbert spaces above can be described as follows.
Define a measure $\nu^{2}_{s,r}$ on $G^{2}_{s,r}:=\{(x,y)\in G \times
G \mid s(x)=r(y)\}$ by
 \begin{gather*}
   \int_{G^{2}_{s,r}} \!\! f\, d\nu^{2}_{s,r} := \int_{G^{0}}
   \int_{G^{u}} \int_{G^{s_{G}(x)}} f(x,y) \, d\lambda^{s_{G}(x)}(y)
   \, d\lambda^{u}(x) \, d\mu(u),
 \end{gather*}
 and a measure $\nu^{2}_{r,r}$ on $G^{2}_{r,r}:=\{ (x,y) \in
 G^{2}\mid r_{G}(x)=r_{G}(y)\}$  by
 \begin{gather*}
   \int_{G^{2}_{r,r}} \!\! g\, d\nu^{2}_{r,r} := \int_{G^{0}}
   \int_{G^{u}}\int_{G^{u}} g(x,y)\, d\lambda^{u}(y)\, d\lambda^{u}(x)\,
   d\mu(u),
 \end{gather*}
where $f \in C_{c}(G^{2}_{s,r})$ and $g\in C_{c}(G^{2}_{r,r})$. Then 
\begin{align} \label{eq:groupoid-iso2}
    H \vtensor{\tilde s}{\tilde \mu}{\tilde r} H &\cong
    L^{2}\big(G^{2}_{s,r},\nu^{2}_{s,r}\big), &
    H \vtensor{\tilde r}{\tilde \mu}{\tilde r} H &\cong    
    L^{2}\big(G^{2}_{r,r},\nu^{2}_{r,r}\big).
\end{align}
By \cite{vallin:2}, there exists a pseudo-multiplicative unitary
\begin{align*}
  V \colon \Hsource \to \Hrange
\end{align*}
such that, with respect to the isomorphisms \eqref{eq:groupoid-iso1}
and \eqref{eq:groupoid-iso2},
\begin{align*}
  (V\zeta)(x,y) &= \zeta(x,x^{-1}y) \quad \text{for all } \zeta \in
     L^{2}(G^{2}_{s,r},\, \nu^{2}_{s,r}), \, (x,y) \in G^{2}_{r,r}.
\end{align*}
Using the identifications \eqref{eq:groupoid-iso1}, we consider $V$ as
a unitary $\Hsource \to \Hrange$.
\begin{theorem}
  The unitary $V$ is a $C^{*}$-pseudo-multiplicative unitary.
\end{theorem}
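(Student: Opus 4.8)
The plan is to verify the two defining conditions of Definition \ref{definition:pmu}: the intertwining relations \eqref{eq:pmu-intertwine} and commutativity of the pentagon diagram \eqref{eq:pmu-pentagon}. I would dispatch the pentagon cheaply. By construction (following \cite{vallin:2}) $V$ is already a pseudo-multiplicative unitary on Hilbert spaces, and under the identifications \eqref{eq:groupoid-iso1}, which rest on Proposition \ref{proposition:rtp-comparison}, the $C^{*}$-relative tensor products $\Hsource$ and $\Hrange$ together with the associativity and flip isomorphisms occurring in \eqref{eq:pmu-pentagon} coincide with their von Neumann counterparts. Since Vallin's pentagon equation is exactly the commutativity of this diagram, \eqref{eq:pmu-pentagon} holds as soon as its operators are defined; and they are defined as soon as \eqref{eq:pmu-intertwine} holds, by Lemma \ref{lemma:pmu-pentagon}. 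Thus the whole problem reduces to the four intertwining relations (together with the pairwise compatibility $\alpha \perp \hbeta$, $\alpha \perp \alpha$, the first of which is recorded in Proposition \ref{proposition:groupoid-factorizations} and the second of which holds by the same argument, since here $\frakB=\frakB^{\dagger}$ and $\alpha=[\alpha\frakB]$).

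To establish \eqref{eq:pmu-intertwine} I would describe each composed factorization explicitly as a space of operators $\frakH \to L^{2}(G^{2}_{s,r},\nu^{2}_{s,r})$ or $\frakH \to L^{2}(G^{2}_{r,r},\nu^{2}_{r,r})$, using the embeddings $j,\hat{j}$ of Proposition \ref{proposition:groupoid-factorizations} and the leg maps $|\cdot\rangle_{\leg{1}},|\cdot\rangle_{\leg{2}}$. For $\xi,\eta \in C_{c}(G)$ and $\zeta \in \frakH=L^{2}(G^{0},\mu)$ one finds that the operators spanning $\alpha \lt \alpha$ send $\zeta$, on $G^{2}_{s,r}$, to $(x,y)\mapsto \xi(x)\eta(y)\zeta(r_{G}(x))$, whereas the operators spanning $\alpha \rt \alpha$ and $\beta \lt \beta$ send $\zeta$, on $G^{2}_{r,r}$, to $(x,y)\mapsto \xi(x)\eta(y)\zeta(r_{G}(x))$; on $G^{2}_{r,r}$ these two target spaces coincide because $r_{G}(x)=r_{G}(y)$, which is what makes the first and last relations of \eqref{eq:pmu-intertwine} compatible. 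The factorizations involving $\hbeta$ carry extra factors $D^{-1/2}$ coming from $\hat{j}$; for instance the operators spanning $\hbeta \rt \hbeta$ send $\zeta$ to $(x,y)\mapsto \xi(x)D^{-1/2}(x)\eta(y)D^{-1/2}(y)\zeta(s_{G}(y))$ on $G^{2}_{s,r}$, while those spanning $\alpha \rt \hbeta$ send $\zeta$ to $(x,y)\mapsto \xi(x)\eta(y)D^{-1/2}(y)\zeta(s_{G}(y))$ on $G^{2}_{r,r}$.

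Applying the formula $(V\zeta')(x,y)=\zeta'(x,x^{-1}y)$ amounts to the substitution $y \mapsto x^{-1}y$. For the purely $\alpha$-relations this turns $\alpha \lt \alpha$ into $\alpha \rt \alpha = \beta \lt \beta$, up to replacing $\eta(y)$ by $\eta(x^{-1}y)$; since $(x,y)\mapsto \xi(x)\eta(y)$ and $(x,y)\mapsto \xi(x)\eta(x^{-1}y)$ run through the same closed subspace of the relevant $L^{2}$-space as $\xi,\eta$ vary in $C_{c}(G)$, one obtains equality of the closed linear spans, not merely an inclusion. For the $\hbeta$-relations the decisive point is the density bookkeeping: using the multiplicativity $D(x^{-1}y)=D(x)^{-1}D(y)$ of the Radon--Nikodym cocycle $D=d\nu/d\nu^{-1}$ together with $s_{G}(x^{-1}y)=s_{G}(y)$, one gets $D^{-1/2}(x)D^{-1/2}(x^{-1}y)=D^{-1/2}(y)$, so that after the substitution the two surplus densities in $\hbeta \rt \hbeta$ collapse to the single $D^{-1/2}$ present in $\alpha \rt \hbeta$. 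This yields $V_{*}(\hbeta \rt \hbeta)=\alpha \rt \hbeta$, and the analogous computations give $V_{*}(\hbeta \rt \beta)=\hbeta \lt \beta$.

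The main obstacle is precisely this interaction between the half-density $D^{-1/2}$, the fibered-product measures, and the groupoid multiplication hidden in $V$: one must check that the cocycle identity for $D$ is compatible with the map $(x,y)\mapsto(x,x^{-1}y)$ so that all density factors match after the change of variables, and that $V$ is genuinely unitary between the two fibered-product $L^{2}$-spaces. Once this analytic core is in place, the remainder is routine --- the compatibility conditions coming from Proposition \ref{proposition:groupoid-factorizations}, and the passage from equality of spanning operators to equality of closed linear spans being a standard density argument in $C_{c}(G)$ --- and combined with the inherited pentagon it completes the verification that $V$ is $C^{*}$-pseudo-multiplicative.
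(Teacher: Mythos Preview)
Your proposal is correct and follows essentially the same route as the paper: both note that the pentagon is inherited from Vallin via the identifications of Proposition \ref{proposition:rtp-comparison}, reduce the problem to the intertwining relations \eqref{eq:pmu-intertwine}, and verify these by working at the $C_c$ level with the substitution $(x,y)\mapsto(x,x^{-1}y)$. The paper packages your explicit $D^{-1/2}$ cocycle bookkeeping more compactly by identifying each composed factorization (via Lemma \ref{lemma:itp-concrete} and Proposition \ref{proposition:groupoid-factorizations}) with an internal tensor product of the modules $L^{2}(G,\lambda^{\pm 1})$, hence with a completion of $C_c(G^{2}_{s,r})$ or $C_c(G^{2}_{r,r})$, so that the single observation that $V$ is the transpose of a homeomorphism $G^{2}_{r,r}\to G^{2}_{s,r}$ handles all four relations at once; your density computation is exactly what underlies this when unpacked.
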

\begin{proof}
  We only need to prove 
  \begin{gather} \label{eq:groupoid-intertwine}
    \begin{aligned}
      V_{*}(\alpha \lt \alpha) &= \alpha \rt \alpha, &
      V_{*}(\hbeta \rt \beta) &= \hbeta \lt \beta, \\
      V_{*}(\hbeta \rt \hbeta) &= \alpha \rt \hbeta, & V_{*}(\beta \lt
      \alpha) &= \beta \lt \beta.
    \end{aligned}
    \end{gather}
    By Lemma \ref{lemma:itp-concrete} and Proposition
    \ref{proposition:groupoid-factorizations}, we can identify the
    factorizations $\alpha \lt \alpha = \beta \lt \alpha$, $\hbeta \rt
    \beta$, $\hbeta \rt \hbeta$ of $\Hsource$ as $C^{*}$-modules with
    the internal tensor products
  \begin{align} \label{eq:groupoid-itp1}
    L^{2}(G,\lambda) &{_{s}\tl} L^{2}(G,\lambda), &
    L^{2}(G,\lambda^{-1}) &\tr_{r} L^{2}(G,\lambda), &
    L^{2}(G,\lambda^{-1}) &\tr_{r} L^{2}(G,\lambda^{-1}),
  \end{align}
  and the factorizations $\alpha \rt \alpha$, $\hbeta \lt \beta$,
  $\alpha \rt \hbeta$, $\beta \lt \beta$ of $\Hrange$ as
  $C^{*}$-modules with the internal tensor products
  \begin{align} \label{eq:groupoid-itp2}
    \begin{aligned}
      L^{2}(G,\lambda) &\tr_{r} L^{2}(G,\lambda), &
      L^{2}(G,\lambda^{-1}) &{_{r}\tl} L^{2}(G,\lambda), \\
      L^{2}(G,\lambda) &\tr_{r} L^{2}(G,\lambda^{-1}), &
      L^{2}(G,\lambda) &{_{r}\tl} L^{2}(G,\lambda).
    \end{aligned}
  \end{align}
  Note that $\alpha \rt \alpha = \beta \rt \beta$.  The internal
  tensor products in \eqref{eq:groupoid-itp1} can be identified with
  certain completions of $C_{c}(G^{2}_{s,r})$, and the internal tensor
  products in \eqref{eq:groupoid-itp2} can be identified with certain
  completions of $C_{c}(G^{2}_{r,r})$.  In each of the equations in
  \eqref{eq:groupoid-intertwine}, the unitary $V$ maps the subspace
  $C_{c}(G^{2}_{s,r})$ of the $C^{*}$-factorization on the left hand
  side to the subspace $C_{c}(G^{2}_{r,r})$ of the
  $C^{*}$-factorization of the right hand side because $V$ is the
  transpose of a homeomorphism $G^{2}_{r,r} \to G^{2}_{s,r}$.
  The claim follows.
\end{proof}

\begin{proposition}
  The unitary $V$ is regular.
\end{proposition}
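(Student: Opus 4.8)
The plan is to realize the operators $\balpha{1} V \kalpha{2}$ explicitly as integral operators on $H=L^{2}(G,\nu)$ and to compare their closed linear span with $\lnspan\alpha\alpha^{*}\rnspan$ by means of a Hilbert--Schmidt density argument. Throughout I fix $f,g,p,q\in C_{c}(G)$ and use that $j(C_{c}(G))$ is dense in $\alpha$, so that it suffices to treat these dense building blocks. The first step is to pin down the concrete form of the ket operators under the identifications \eqref{eq:groupoid-iso1}, \eqref{eq:groupoid-iso2} and Proposition \ref{proposition:rtp-comparison}. A short inner-product computation (matching $\langle j(g)|_{\leg{2}}|j(g')\rangle_{\leg{2}}=\rho_{\hbeta}(\langle g|g'\rangle)$ and $\langle j(f)|_{\leg{1}}|j(f')\rangle_{\leg{1}}=\rho_{\alpha}(\langle f|f'\rangle)$) shows that, with respect to $\Hsource\cong L^{2}(G^{2}_{s,r},\nu^{2}_{s,r})$ and $\Hrange\cong L^{2}(G^{2}_{r,r},\nu^{2}_{r,r})$,
\begin{align*}
  \big(|j(g)\rangle_{\leg{2}}\zeta\big)(x,y)=\zeta(x)g(y) \quad\text{on } G^{2}_{s,r}, \qquad
  \big(|j(f)\rangle_{\leg{1}}\zeta\big)(x,y)=f(x)\zeta(y) \quad\text{on } G^{2}_{r,r}.
\end{align*}

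Next I would compute the composition. Applying the explicit formula $(V\phi)(x,y)=\phi(x,x^{-1}y)$ to $|j(g)\rangle_{\leg{2}}\zeta$ and then the adjoint $\langle j(f)|_{\leg{1}}$ gives, for each $\zeta\in H$,
\begin{align*}
  \big(\langle j(f)|_{\leg{1}} V |j(g)\rangle_{\leg{2}}\,\zeta\big)(z)
  = \int_{G^{r_{G}(z)}} \overline{f(x)}\,g(x^{-1}z)\,\zeta(x)\, d\lambda^{r_{G}(z)}(x),
\end{align*}
that is, $\langle j(f)|_{\leg{1}} V |j(g)\rangle_{\leg{2}}$ is the integral operator with kernel $k_{f,g}(z,x)=\overline{f(x)}\,g(x^{-1}z)$ supported on $G^{2}_{r,r}$. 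A direct computation shows likewise that $j(p)j(q)^{*}$ is the integral operator with kernel $(z,x)\mapsto p(z)\overline{q(x)}$ on $G^{2}_{r,r}$. Since all these operators decompose over $G^{0}$ along the $r_{G}$-fibres, the assignment $k\mapsto \mathrm{Op}(k)$ is an isometry from $L^{2}(G^{2}_{r,r},\nu^{2}_{r,r})$ onto the Hilbert--Schmidt operators of this fibred form, with $\|\mathrm{Op}(k)\|_{\mathrm{HS}}=\|k\|_{L^{2}(\nu^{2}_{r,r})}$.

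The comparison then reduces to a density statement about kernels. The map $\chi\colon G^{2}_{r,r}\to G^{2}_{s,r}$, $(z,x)\mapsto(x,x^{-1}z)$, is a homeomorphism, and by left-invariance of $\lambda$ (in the form $\int_{G^{u}}\phi(x^{-1}z)\,d\lambda^{u}(z)=\int_{G^{s_{G}(x)}}\phi(w)\,d\lambda^{s_{G}(x)}(w)$ for $x\in G^{u}$) it pushes $\nu^{2}_{r,r}$ forward to $\nu^{2}_{s,r}$; hence $\phi\mapsto\phi\circ\chi$ is unitary. Under this unitary the kernels $k_{f,g}$ correspond exactly to the product functions $(x,w)\mapsto\overline{f(x)}g(w)$ on $G^{2}_{s,r}$, while the kernels of $j(p)j(q)^{*}$ are themselves product functions on $G^{2}_{r,r}$. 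Since finite sums of such products are $L^{2}$-dense (Stone--Weierstrass), both families span $L^{2}(\nu^{2}_{r,r})$ densely, so via the Hilbert--Schmidt isometry $\mathrm{span}\{\langle j(f)|_{\leg{1}}V|j(g)\rangle_{\leg{2}}\}$ and $\mathrm{span}\{j(p)j(q)^{*}\}$ have the same Hilbert--Schmidt closure. Because Hilbert--Schmidt convergence implies norm convergence, their operator-norm closures coincide as well, giving $\lnspan\balpha{1}V\kalpha{2}\rnspan=\lnspan\alpha\alpha^{*}\rnspan$, which is regularity.

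I expect the main obstacle to be the first step: carefully extracting the two $L^{2}$-formulas for the ket operators from the abstract definitions through the chain of identifications in Subsection \ref{subsection:rtp-comparison}, and the bookkeeping of source and range factorizations (recall $\beta=\alpha$). The second genuinely groupoid-theoretic point is the measure-preservation of $\chi$, which is exactly left-invariance of the Haar system; once the explicit kernels and this invariance are in hand, the Hilbert--Schmidt density argument is routine and requires no delicate operator-norm estimate.
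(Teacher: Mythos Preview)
Your computation of the integral kernels is correct and agrees with the paper: $\langle j(f)|_{\leg{1}}V|j(g)\rangle_{\leg{2}}$ has kernel $(z,x)\mapsto\overline{f(x)}\,g(x^{-1}z)$ on $G^{2}_{r,r}$, and $j(p)j(q)^{*}$ has kernel $(z,x)\mapsto p(z)\overline{q(x)}$. The identification via the homeomorphism $\chi$ and the Stone--Weierstrass density are also fine.

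The gap is in the final step. The map $k\mapsto\mathrm{Op}(k)$ is \emph{not} an isometry into the Hilbert--Schmidt operators on $L^{2}(G,\nu)$: these operators are decomposable, $\mathrm{Op}(k)=\int_{G^{0}}^{\oplus}\mathrm{Op}(k)^{u}\,d\mu(u)$, and $\|k\|_{L^{2}(\nu^{2}_{r,r})}^{2}=\int_{G^{0}}\|\mathrm{Op}(k)^{u}\|_{\mathrm{HS}}^{2}\,d\mu(u)$ is only a \emph{fibrewise} Hilbert--Schmidt norm. The operator norm, by contrast, is $\mathrm{ess\,sup}_{u}\|\mathrm{Op}(k)^{u}\|$, and $L^{2}$-convergence in $u$ does not control an essential supremum. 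Concretely, take $G=G^{0}$ a space with non-atomic $\mu$: then $G^{2}_{r,r}$ is the diagonal, $\mathrm{Op}(k)$ is multiplication by $k\in C_{c}(G^{0})$, its global HS norm is $+\infty$ unless $k=0$, and $L^{2}(\mu)$-convergence of multipliers does not imply $L^{\infty}$-convergence. So ``Hilbert--Schmidt convergence implies norm convergence'' fails here, and the conclusion about operator-norm closures does not follow.

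The fix is to run your density argument in $C_{c}(G^{2}_{r,r})$ with its inductive-limit topology rather than in $L^{2}$. Your homeomorphism $\chi$ and Stone--Weierstrass show that both families of kernels span dense subspaces of $C_{c}(G^{2}_{r,r})$ in this sense (uniform approximation with supports in a fixed compact set). One then needs the elementary estimate that for $k$ supported in a fixed compact $K\subseteq G^{2}_{r,r}$ one has $\|\mathrm{Op}(k)\|\leq C_{K}\|k\|_{\infty}$, which follows from the continuity of the Haar system (bound each fibrewise HS norm by $\|k\|_{\infty}$ times $\lambda^{u}\!\times\!\lambda^{u}$ of a fixed compact). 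This is exactly the ``standard arguments'' the paper invokes: it identifies both closed spans with the closed span of all $\mathrm{Op}(k)$, $k\in C_{c}(G^{2}_{r,r})$.
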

\begin{proof}
  For each $\xi,\xi' \in C_{C}(G)$, $\zeta \in L^{2}(G,\nu)$, and $y
  \in G$,
  \begin{align*}
    \big(\langle j(\xi')|_{\leg{1}} V |j(\xi)\rangle_{\leg{2}}
    \zeta\big)(y) &= \int_{G^{r_{G}(y)}} \overline{\xi'(x)}
    \zeta(x)\xi(x^{-1}y)
    d\lambda^{r_{G}(y)}(x),\\
    \big(j(\xi')j(\xi)^{*}\zeta\big)(y) &= \xi'(y) \int_{G^{r_{G}(y)}}
    \overline{\xi(x)} \zeta(x)d\lambda^{r_{G}(y)}(x).
  \end{align*}
  These equations and standard arguments show that $\lnspan
  \balpha{1}V \kalpha{2}\rnspan$ and $\lnspan \alpha\alpha^{*}\rnspan$
  coincide with the closed span of operators on
  $L^{2}(G,\nu)$ of the form
  \begin{align*}
    \zeta \mapsto \Big(y \mapsto \int_{G^{r_{G}(y)}} f(x,y)
    \zeta(x)d\lambda^{r_{G}(y)}(x)\Big),
  \end{align*}
  where $f \in C_{c}(G^{2}_{r,r})$.
\end{proof}

By Theorem \ref{theorem:pmu-regular}, the regular
$C^{*}$-pseudo-multiplicative  unitary $V$  gives rise to two concrete
Hopf $C^{*}$-bimodules
\begin{align*}
  \big(\cbaseosb,H,\hA(V),\hbeta,\alpha,\hDelta\big) \quad
  \text{and} \quad \big(\cbasesb, A(V),\alpha,\beta,\Delta\big).
\end{align*}
Denote by $m \colon C_{0}(G) \to {\cal L}(L^{2}(G,\nu))$ the
representation given by multiplication operators. Recall that for each
$g \in C_{c}(G)$, there exists a unique operator $L(g) \in {\cal
  L}(L^{2}(G,\nu))$ such that 
\begin{align*}
\big(L(g)\zeta\big)(y) = \int_{G^{r_{G}(y)}}
g(x)D^{-1/2}(x) \zeta(x^{-1}y) d\lambda^{r_{G}(y)}(x) \quad \text{for
  all } y \in G,\, \zeta \in L^{2}(G,\nu),
\end{align*}
 and that the  reduced groupoid $C^{*}$-algebra $C^{*}_{r}(G)$ is the
 closed linear span of all operators of the $L(g)$, where $g \in C_{c}(G)$
\cite{renault}.
\begin{theorem}
\begin{enumerate}
\item $\hA(V)=m(C_{0}(G)) \cong C_{0}(G)$, and for each $f \in
  C_{0}(G)$, $\zeta \in \Hsource \cong
  L^{2}(G^{2}_{s,r},\nu^{2}_{s,r})$, $(x,y) \in G^{2}_{s,r}$,
    \begin{align*}
      \big(\hDelta(m(f))\zeta\big)(x,y) = f(xy) \zeta(x,y).
    \end{align*}
  \item   $A(V)=C^{*}_{r}(G)$, and for each $g \in
    C_{c}(G)$, $\zeta \in \Hrange \cong
    L^{2}(G^{2}_{r,r},\nu^{2}_{r,r})$, $(x,y) \in G^{2}_{r,r}$,
    \begin{align*}
      \big(\Delta(L(g))\zeta\big)(x,y) = \int_{G^{r_{G}(x)}}
      g(z)D^{-1/2}(z) \zeta(z^{-1}x,z^{-1}y) d\lambda^{r_{G}(x)}(z).
    \end{align*}
  \end{enumerate}
\end{theorem}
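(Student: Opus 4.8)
The plan is to evaluate the generating operators of $\hA(V)$ and $A(V)$ in the concrete $L^{2}$-models \eqref{eq:groupoid-iso2} and then to read off $\hDelta$ and $\Delta$ directly from their defining formulas $\hDelta(y)=V^{*}(1\rtensorh y)V$ and $\Delta(z)=V(z\rtensorh 1)V^{*}$. Everything rests on identifying, via Proposition \ref{proposition:rtp-comparison} and Proposition \ref{proposition:groupoid-factorizations}, how the ket--bra operators act on the $L^{2}$-pictures; exactly as in the proof that $V$ is regular, one finds for $\xi\in C_{c}(G)$ and $\zeta\in L^{2}(G,\nu)$ that $\big(|j(\xi)\rangle_{\leg{2}}\zeta\big)(a,b)=\zeta(a)\xi(b)$ and $\big(|\hat\jmath(\xi)\rangle_{\leg{1}}\zeta\big)(a,b)=\xi(a)D^{-1/2}(a)\zeta(b)$, the adjoint bras being obtained by integrating the corresponding leg against $\overline{\xi}$ over the appropriate range fibre. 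Combined with $(V\zeta)(x,y)=\zeta(x,x^{-1}y)$ and $(V^{*}\zeta)(x,y)=\zeta(x,xy)$, the four displayed identities then follow by direct substitution. The one delicate point, and the main obstacle, is keeping track of the modular factor $D^{-1/2}$ and of the fibre measures $\lambda^{u},\lambda^{-1}_{u}$ through these identifications; once the ket--bra formulas above are in place the rest is mechanical.

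For part (i) I would compute, for $\xi,\xi'\in C_{c}(G)$, the generator $\bbeta{2}V\kalpha{2}$ of $\hA(V)$ (recall $\beta=\alpha$). Composing the three maps gives, for $\zeta\in L^{2}(G,\nu)$ and $x\in G$,
\begin{align*}
\big(\langle j(\xi')|_{\leg{2}}V|j(\xi)\rangle_{\leg{2}}\zeta\big)(x)
=\zeta(x)\int_{G^{r_{G}(x)}}\overline{\xi'(y)}\,\xi(x^{-1}y)\,d\lambda^{r_{G}(x)}(y),
\end{align*}
so this operator is the multiplication operator $m(F)$ with $F(x)=\int_{G^{r_{G}(x)}}\overline{\xi'(y)}\xi(x^{-1}y)\,d\lambda^{r_{G}(x)}(y)$. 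Letting $\xi$ run through an approximate unit of the convolution algebra $C_{c}(G)$, with $\int_{G^{u}}\xi\,d\lambda^{u}\to1$ uniformly in $u$, shows that the closed span of these functions $F$ contains every $\overline{\xi'}\in C_{c}(G)$ and hence all of $C_{0}(G)$; thus $\hA(V)=m(C_{0}(G))\cong C_{0}(G)$, since $m$ is injective.

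For part (ii) the analogous computation for the generator $\balpha{1}V\khbeta{1}$ of $A(V)$ gives, with $j(\xi')\in\alpha$ and $\hat\jmath(\hat\xi)\in\hbeta$,
\begin{align*}
\big(\langle j(\xi')|_{\leg{1}}V|\hat\jmath(\hat\xi)\rangle_{\leg{1}}\zeta\big)(y)
=\int_{G^{r_{G}(y)}}\overline{\xi'(x)}\,\hat\xi(x)\,D^{-1/2}(x)\,\zeta(x^{-1}y)\,d\lambda^{r_{G}(y)}(x),
\end{align*}
which is precisely $L(g)$ for $g=\overline{\xi'}\,\hat\xi$; the factor $D^{-1/2}$ supplied by $\hat\jmath$ matches the one built into $L(g)$. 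Since the products $\overline{\xi'}\,\hat\xi$ span a dense subspace of $C_{c}(G)$ and $C^{*}_{r}(G)$ is by definition the closed span of the operators $L(g)$, this yields $A(V)=C^{*}_{r}(G)$.

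Finally I would obtain the comultiplications by substitution. Writing $1\rtensorh m(f)$ as multiplication by $f$ in the second leg of $\Hrange\cong L^{2}(G^{2}_{r,r},\nu^{2}_{r,r})$, for $f\in C_{0}(G)$ and $(x,y)\in G^{2}_{s,r}$,
\begin{align*}
\big(\hDelta(m(f))\zeta\big)(x,y)=\big((1\rtensorh m(f))V\zeta\big)(x,xy)=f(xy)\,\zeta(x,y),
\end{align*}
as claimed in (i). For $g\in C_{c}(G)$, the operator $L(g)\rtensorh 1$ acts in the first leg of $\Hsource$ by $\big((L(g)\rtensorh1)\Theta\big)(a,w)=\int_{G^{r_{G}(a)}}g(z)D^{-1/2}(z)\Theta(z^{-1}a,w)\,d\lambda^{r_{G}(a)}(z)$; inserting $(V^{*}\zeta)(a,b)=\zeta(a,ab)$ and then $(V\Phi)(x,y)=\Phi(x,x^{-1}y)$ gives
\begin{align*}
\big(\Delta(L(g))\zeta\big)(x,y)=\int_{G^{r_{G}(x)}}g(z)\,D^{-1/2}(z)\,\zeta(z^{-1}x,z^{-1}y)\,d\lambda^{r_{G}(x)}(z),
\end{align*}
matching the formula in (ii). As noted, the whole argument hinges on the correct $L^{2}$-description of the ket--bra operators and the placement of $D^{-1/2}$; with that settled, both statements reduce to the substitutions above together with the two density arguments.
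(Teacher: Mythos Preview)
Your proposal is correct and follows essentially the same approach as the paper: you compute the generating operators $\langle j(\xi')|_{\leg{2}}V|j(\xi)\rangle_{\leg{2}}$ and $\langle j(\xi')|_{\leg{1}}V|\hat\jmath(\hat\xi)\rangle_{\leg{1}}$ in the concrete $L^{2}$-model, identify them as $m(F)$ and $L(g)$ respectively, invoke a density argument, and then obtain the comultiplication formulas by direct substitution of $V$ and $V^{*}$. The only cosmetic difference is that the paper carries out the generator computations via inner products $\langle\zeta|\,\cdot\,\zeta'\rangle$ rather than first isolating the ket--bra formulas, and it leaves the density step as ``standard arguments'' where you spell out an approximate-unit argument; the content is the same.
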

\begin{proof}
  i) Let $\xi,\xi' \in C_{c}(G)$ and put $\ha_{\xi,\xi'}:=\langle
  j(\xi)|_{\leg{2}}V|j(\xi')\rangle_{\leg{2}}$. For all $\zeta,\zeta'
  \in L^{2}(G,\nu)$,
  \begin{align*}
    \langle \zeta| \ha_{\xi,\xi'} \zeta'\rangle &=
    \langle \zeta \tl j(\xi)|V(\zeta' \tl j(\xi'))\rangle \\ &=
    \int_{G} \int_{G^{r_{G}(x)}} \overline{\zeta(x)\xi(y)}
    \zeta'(x)\xi'(x^{-1}y) d\lambda^{r_{G}(x)}(y) d\nu(x) 
    = \langle \zeta|m(f) \zeta'\rangle,
  \end{align*}
  where $f \in C_{c}(G)$ is given by
  \begin{align*}
    x \mapsto \int_{G^{r_{G}(x)}} \overline{\xi(y)}\xi'(x^{-1}y)
    d\lambda^{r_{G}(x)}(y).
  \end{align*}
  Standard arguments show that the closed linear span of all operators
  $m(f)$ with $f \in C_{c}(G)$ as above is equal to
  $m(C_{0}(G))$. Hence, $\hA(V)=m(C_{0}(G))$. 

  For each $f \in C_{0}(G)$, $\zeta \in
  L^{2}(G^{2}_{s,r},\nu^{2}_{s,r})$, and $(x,y) \in G^{2}_{s,r}$,
  \begin{align*}
    \big( \hDelta(m(f))\zeta\big)(x,y) &= \big(V^{*}( \Id \rtensorh
    m(f))V\zeta \big) (x,y) \\
    &= \big(( \Id \rtensorh m(f))V\zeta \big) (x,xy) =
    f(xy)(V\zeta)(x,xy) = f(xy)\zeta(x,y).
  \end{align*}

  ii) Let $\xi,\xi' \in C_{c}(G)$ and put $a_{\xi,\xi'}:=\langle
  j(\xi)|_{\leg{1}} V |\hat j(\xi')\rangle_{\leg{1}}$.  For all
  $\zeta,\zeta' \in L^{2}(G,\nu)$,
  \begin{align*}
    \langle \zeta|a_{\xi,\xi'}\zeta'\rangle &=
    \langle  j(\xi) \tr \zeta| V(\hat j(\xi') \tr \zeta')\rangle \\
    &= \int_{G} \int_{G^{r_{G}(y)}} \overline{\xi(x)\zeta(y)}
    \xi'(x)D^{-1/2}(x) \zeta'(x^{-1}y) d\lambda^{r_{G}(y)}(x) d\nu(y) 
    =    \langle \zeta|L(g)\zeta'\rangle,
  \end{align*}
  where $g \in C_{c}(G)$ is given by
  \begin{align*}
    x \mapsto \int_{G^{r_{G}(x)}} \overline{\xi(x)} \xi'(x).
  \end{align*}
  By definition, the closed linear span of all operators $L(g)$ with
  $g \in C_{c}(G)$ is equal to $C^{*}_{r}(G)$, so $A(V)=C^{*}_{r}(G)$.

  Finally, for each $g \in C_{c}(G)$, $\zeta \in
  L^{2}(G^{2}_{r,r},\nu^{2}_{r,r})$, and $(x,y) \in G^{2}_{r,r}$,
  \begin{align*}
    \big( \Delta(L(g))\zeta\big)(x,y) &= \big(V(L(g) \rtensorh
   \Id)V^{*}\zeta \big) (x,y) \\
    &= \big((L(g)  \rtensorh \Id)V^{*}\zeta \big) (x,x^{-1}y) \\
    &=
    \int_{G^{r_{G}(x)}} g(z) D^{-1/2}(z) (V^{*}\zeta)(z^{-1}x,x^{-1}y)
    d\lambda^{r_{G}(x)}(z)   \\
    &=    \int_{G^{r_{G}(x)}} g(z) D^{-1/2}(z) \zeta(z^{-1}x,z^{-1}y)
    d\lambda^{r_{G}(x)}(z). \qedhere
  \end{align*}
\end{proof}

\def\cprime{$'$}


\begin{thebibliography}{10}

\bibitem{baaj:2}
S.~Baaj and G.~Skandalis.
\newblock Unitaires multiplicatifs et dualit\'e pour les produits crois\'es de
  {$C\sp *$}-alg\`ebres.
\newblock {\em Ann. Sci. \'Ecole Norm. Sup. (4)} \textbf{26} (1993), 425--488.

\bibitem{blanchard}
{\'E}.~Blanchard.
\newblock D\'eformations de {$C\sp *$}-alg\`ebres de {H}opf.
\newblock {\em Bull. Soc. Math. France} (1) \textbf{124} (1996), 141--215.

\bibitem{echterhoff}
S.~Echterhoff, S.~Kaliszewski, J.~Quigg, and I.~Raeburn.
\newblock A categorical approach to imprimitivity theorems for {$C\sp
  *$}-dynamical systems.
\newblock {\em Mem. Amer. Math. Soc.} (850) \textbf{150} (2006).

\bibitem{enock:10}
M.~Enock.
\newblock Quantum groupoids of compact type.
\newblock {\em J. Inst. Math. Jussieu} (1) \textbf{4} (2005), 29--133.

\bibitem{enock:1}
M.~Enock and J.-M. Vallin.
\newblock Inclusions of von {N}eumann algebras, and quantum groupoids.
\newblock {\em J. Funct. Anal.} (2) \textbf{172} (2000), 249--300.

\bibitem{enock:2}
M.~Enock and J.-M. Vallin.
\newblock Inclusions of von {N}eumann algebras and quantum groupoids. {II}.
\newblock {\em J. Funct. Anal.} (1) \textbf{178} (2000), 156--225.

\bibitem{vaes:1}
J.~Kustermans and S.~Vaes.
\newblock Locally compact quantum groups.
\newblock {\em Ann. Sci. \'Ecole Norm. Sup.} (6) \textbf{33} (2000), 837--934.

\bibitem{vaes:30}
J.~Kustermans and S.~Vaes.
\newblock Locally compact quantum groups in the von {N}eumann algebraic
  setting.
\newblock {\em Math. Scand.} (1) \textbf{92} (2003), 68--92.

\bibitem{lance}
E.~C. Lance.
\newblock {\em Hilbert {$C\sp *$}-modules. A toolkit for operator algebraists},
  volume 210 of {\em London Mathematical Society Lecture Note Series}.
\newblock Cambridge University Press, Cambridge, 1995.

\bibitem{lesieur}
F.~Lesieur.
\newblock {\em Groupo\"ides quantiques mesur\'es: axiomatique, \'etude,
  dualit\'e, exemples.}
\newblock PhD thesis, Universite d'Orl\'eans, 2003.

\bibitem{masuda}
T.~Masuda, Y.~Nakagami, and S.~L. Woronowicz.
\newblock A {$C\sp \ast$}-algebraic framework for quantum groups.
\newblock {\em Internat. J. Math.} (9) \textbf{14} (2003), 903--1001.

\bibitem{ouchi} M.~O'uchi.  \newblock Pseudo-multiplicative unitaries
  on {H}ilbert {$C\sp \ast$}-modules.  \newblock {\em Far East J.
    Math. Sci. (FJMS)}, Special Volume, Part II (2001), 229--249.

\bibitem{ouchi:2}
M.~O'uchi.
\newblock Pseudo-multiplicative unitaries associated with inclusions of
  finite-dimensional {$C\sp *$}-algebras.
\newblock {\em Linear Algebra Appl.} \textbf{341} (2002), 201--218.


\bibitem{ouchi:3} M.~O'uchi.  \newblock Coring structures associated
  with multiplicative unitary operators on {H}ilbert {$C\sp
    *$}-modules.  \newblock {\em Far East J. Math. Sci. (FJMS)} (2)
  \textbf{11} (2003), 121--136.

\bibitem{paterson}
A.~L.~T. Paterson.
\newblock {\em Groupoids, inverse semigroups, and their operator algebras},
  volume 170 of {\em Progress in Mathematics}.
\newblock Birkh\"auser Boston Inc., Boston, MA, 1999.

\bibitem{renault} J.~Renault.  \newblock {\em A groupoid approach to
    {$C\sp{\ast} $}-algebras}, volume 793 of {\em Lecture Notes in
    Mathematics}.  \newblock Springer, Berlin, 1980.

\bibitem{timmermann} T.~Timmermann. \newblock{\em
    Pseudo-multiplicative unitaries on $C^{*}$-modules and {H}opf
    $C^{*}$-families I}. to appear in \newblock{\em J.\ of
    Noncommutative Geometry}.

\bibitem{timmermann:thesis}
T.~Timmermann.
\newblock {\em Pseudo-multiplicative unitaries and pseudo-{K}ac systems on
  ${C}^*$-modules}.
\newblock PhD thesis, Universit\"at M\"unster, 2005.
\newblock Available at
  {\texttt{www.math.uni-muenster.de/sfb/about/publ/heft394.ps}}.

\bibitem{vallin:1}
J.-M. Vallin.
\newblock Bimodules de {H}opf et poids op\'eratoriels de {H}aar.
\newblock {\em J. Operator Theory}, 35(1):39--65, 1996.

\bibitem{vallin:2}
J.-M. Vallin.
\newblock Unitaire pseudo-multiplicatif associ\'e \`a un groupo\"\i de.
  {A}pplications \`a la moyennabilit\'e.
\newblock {\em J. Operator Theory} (2) \textbf{44} (2000), 347--368.

\end{thebibliography}
\end{document}